\title{Places, cuts and orderings of function fields}
\subjclass[2010]{12J10, 12J15, 14H05, 14P25}
\keywords{real algebraic curve, algebraic function field, space of orderings, space of cuts, $\mathbb{R}$-places}
\author[P. Koprowski \and K. Kuhlmann]{Przemysław Koprowski \and Katarzyna Kuhlmann}
\address{Institute of Mathematics\\
University of Silesia\\ Bankowa 14\\ 40-007 Katowice,
Poland} \email{przemyslaw.koprowski@us.edu.pl}
\address{Institute of Mathematics\\
University of Silesia\\ Bankowa 14\\ 40-007 Katowice,
Poland} \email{kmk@math.us.edu.pl}
\newcommand{\st}{\bigm|}			
\newcommand{\restrict}{\bigm|}
\newcommand{\baseField}{\Bbbk}			
\newcommand{\kk}{\baseField}
\newcommand{\funcField}{K}			
\newcommand{\F}{\funcField}			
\newcommand{\Curve}{\mathfrak{c}}		
\newcommand{\Infty}[1]{\mathfrak{\infty}_{#1}}	
\newcommand{\csf}[1]{\eta_{#1}}			
\newcommand{\intf}[1]{\chi_{(#1)}}		
\renewcommand{\AA}{\mathbb{A}}			
\newcommand{\NN}{\mathbb{N}}			
\newcommand{\PP}{\mathbb{P}}			
\newcommand{\QQ}{\mathbb{Q}}			
\newcommand{\RR}{\mathbb{R}}			
\newcommand{\CO}{\mathcal{O}}			
\newcommand{\ordering}{\beta}			
\newcommand{\Orderings}{X}				
\newcommand{\place}{\lambda}			
\newcommand{\setP}{\mathcal{P}}			
\newcommand{\gl}{\mathfrak{l}}
\newcommand{\go}{\mathfrak{o}}
\newcommand{\gp}{\mathfrak{p}}			
\newcommand{\gq}{\mathfrak{q}}			
\newcommand{\gr}{\mathfrak{r}}			
\newcommand{\gu}{\mathfrak{u}}
\newcommand{\CL}{\mathfrak{L}}			
\newcommand{\CU}{\mathfrak{U}}			
\newcommand{\VL}{L}				
\newcommand{\VU}{U}				
\newcommand{\ball}{B}				
\newcommand{\Balls}{\mathscr{B}}		
\newcommand{\VG}{v\kk}				
\newcommand{\cto}{\Psi}				
\newcommand{\otc}{\Phi}				
\newcommand{\Ze}{\mathcal{Z}}			
\newcommand{\up}[1]{\uparrow\! #1}		
\newcommand{\lo}[1]{\downarrow\! #1}		
\newcommand{\locut}[1]{C^-_{#1}}		
\newcommand{\upcut}[1]{C^+_{#1}}		
\newcommand{\Cuts}{\mathscr{C}}			
\newcommand{\PCuts}{\mathscr{C}^*}		
\DeclareMathOperator{\dcup}{\mathbin{\dot\cup}}
\DeclareMathOperator{\conv}{conv}
\DeclareMathOperator{\id}{id}
\DeclareMathOperator{\sgn}{sgn}
\newcommand{\term}[1]{\emph{#1}}		
\numberwithin{equation}{section}
\newtheorem{thm}[equation]{Theorem}
\newtheorem{lem}[equation]{Lemma}
\newtheorem{prop}[equation]{Proposition}
\theoremstyle{definition}
\newtheorem{df}[equation]{Definition}
\begin{document}
\begin{abstract}
In this paper we investigate the space of $\mathbb{R}$-places of an algebraic function field of one variable. We deal with the problem of determining when two orderings of such a field correspond to a single $\mathbb{R}$-place. To this end we introduce and study the space of cuts on a real curve and prove that the space is homeomorphic to the space of orderings. Finally, we prove that two cuts (consequently, two orderings) correspond to a single $\mathbb{R}$-place if they are induced by a single ultrametric ball.
\end{abstract}
\maketitle

\section{Introduction}
The connection between orderings, valuations and cuts is a very important aspect of the theory of ordered fields. The following facts are well known (see for example \cite{Lam83} and \cite{Pre84}).

For an ordering $\ordering$ of a formally real field  $F$, the convex hull (with respect to $\ordering$)  of the rationals in $F$, $\conv_{\ordering}(\QQ)$, is a valuation ring of $F$. The associated valuation~$v_{\ordering}$ is called the \term{natural valuation of $\ordering$}. We shall  denote its  value group by~$v_{\ordering}F$. Note that $v_{\ordering}$ is a trivial valuation  if and only if $\ordering $ is an archimedean ordering of~$F$.  The ring $\conv_{\ordering}(\QQ)$ has a unique maximal ideal of \term{infinitesimals} and its residue field $Fv_{\ordering}$ admits an archimedean ordering induced by $\ordering$. Therefore, $Fv_{\ordering}$ can be embedded in $\RR$ and the corresponding place $\lambda_{\ordering}$ is an \term{$\RR$-place} of $F$.  The map
\begin{equation}\label{eq_lambda}
\ordering  \mapsto \lambda_{\ordering}
\end{equation}
is  surjective by the Baer-Krull Theorem. The set of all orderings of $F$, denoted by  $\Orderings(F)$,  is a topological space with a topology induced by the subbasis of \term{Harrison sets}:
\[
H(a) = \bigl\{\ordering \in \Orderings(F)\st a \in \ordering\bigr\},\qquad\text{for}\quad a\in \dot F.
\]  
By the surjectivity of the map in ~\eqref{eq_lambda}, the set of all $\RR$-places of $F$ becomes a topological space equipped with the quotient topology.  This topology is Hausdorff and compact. Any two $\RR$-places can be separated (as functions) by elements of the \term{real holomorphy ring of $F$}, which  is the intersection of all  valuation rings of $F$ with formally real residue fields. If $K$ is a field extension of $F$ and $\ordering $ is an ordering of~$K$, then $\ordering \cap F$ is an ordering of $F$. The restriction of  $\lambda_{\ordering}$ to $F$ is then equal to $\lambda_{\ordering\cap F}$ and if $v_\ordering$ is the natural valuation of $\ordering$ then $v\mid_F$ is the natural valuation of $\ordering \cap F$.

Throughout this paper, by $\kk$ we always denote an arbitrary real closed field---possibly a nonarchimedean one. The interaction between orderings  of a rational function field $\kk(x)$ and the cuts of $\kk$ is well known. Orderings of $\kk(x)$ are in one-to-one correspondence with cuts of $\kk$ (see \cite{Gil81}). Even more, the space of orderings $\Orderings\bigl(\kk(x)\bigr)$ with the Harrison topology is homeomorphic to the set $\Cuts(\kk)$ of all cuts of $\kk$ equipped with the order topology. In this particular case, the map in  \eqref{eq_lambda} depends strongly on the natural valuation $v$ of $\kk$ (i.e., on the valuation associated with the unique ordering of $\kk$). More precisely, $v$ induces an ultrametric on $\kk$  which plays a major role in the classification of cuts and places  (see \cite{KMO11} and the last section of this paper).

The goal of the present work is to generalize the above results to any algebraic function field $\F$ of transcendence degree 1 over $\kk$. The starting point is the well known interplay of  $\kk$-places  and curves  described nearly 40 years ago by M.~Knebusch (see \cite{Kne76a, Kne76b}). Recently, C.~Scheiderer (see Appendix to~\cite{GBH13}) proved that the space of orderings of $\F$ is homeomorphic to the space of orderings of $\kk(x)$. In our paper, using completely different techniques, we develop a general framework, which in particular can be used to reprove Scheiderer's result (see the comment on page~\pageref{eq_Cuts=disjoint_union} after Equation~\eqref{eq_Cuts=disjoint_union}). Our approach has the additional advantage of preserving more information about the geometric structure of the curve.

Our paper is organized as follows. First, in Section~\ref{sec_real_curves} we gather all the necessary terminology and facts concerning real algebraic curves which are needed later. 

In Section~\ref{sec_cuts}, we introduce cuts on a real curve and investigate their connections to orderings. Here we also formulate our first main result (see Theorem \ref{thm_Cuts_homeo_Orderings}), asserting that the space of orderings of $K$ is homeomorphic to the space of cuts of a curve which is a smooth projective model for $K$. For any transcendental element $x \in \F$ we define the projection $\pi_x$ of the set of cuts of the curve to the set of cuts of~$\kk$, and we show that this projection is compatible with the restriction of orderings of $K$ to the rational function field $\kk(x)$.

Next, in Section~\ref{sec_uballs}, we recall the notion of an ultrametric  on $\kk$. This ultrametric can be extended to an ultrametric of the affine space $\AA^n \kk$. 

The ultrametric balls in $\kk$ determine cuts in $\kk$ that are called \term{ball cuts}. If  $x \in \F$ is transcendental over $\kk$ and $C$ is a cut of the curve such that $\pi_x(C)$ is a ball cut, then $\pi_y(C)$ is a ball cut for any $y \in \F$ transcendental over $\kk$. We discuss this fact in Section~\ref{sec_ball_cuts}. The cuts of the curve with this property we call  \term{ball cuts} as well. 
Moreover, we give  a sufficient and necessary  condition for two orderings of a function field to correspond to a single $\RR$-place.

In the last section we embed our curve in an affine space $\AA^n \kk$. The ultrametric balls in $\AA^n \kk$ determine ball cuts on the curve. Conversely, we show that every ball cut is determined by some ultrametric ball in $\AA^n \kk$. We also prove that any two cuts of a curve which are associated with the orderings compatible with the same $\RR$-place are determined by the same ball in $\AA^n\kk$. 

\section{Real curves}\label{sec_real_curves}
The aim of this section is to gather tools and terminology concerning real algebraic curves which we shall use subsequently. Most of the results presented here are well known, hence we take the liberty to omit the proofs,  pointing the reader to appropriate bibliography instead. The standard references for the subject are \cite{BCR98, Kne76a, Kne76b}.

From now on, let $\kk$ be a fixed real closed field and $\F$ a formally real algebraic function field (of one variable) over $\kk$. Consider the set
\[
\bigl\{ \CO_\gp \st \CO_\gp\text{ is a valuation ring with maximal ideal $\gp$ such that } \kk\varsubsetneq \CO_\gp\varsubsetneq \F\bigr\}
\]
of all proper valuation rings of $\F$ containing $\kk$. Every such  valuation ring $\CO_\gp$ is uniquely determined by its maximal ideal $\gp$. These ideals may be regarded as closed points of the scheme associated with $\F$. Following \cite{Kne76a}, we say that a point $\gp$ is \term{real} if the residue field $\sfrac{\CO_\gp}{\gp}$ is formally real, hence equal to $\kk$ (i.e., $\CO_\gp$ is residually real in terms of \cite{Lam83}). The set of all real points is a complete smooth real algebraic curve that will be denoted $\Curve$ in this paper. We treat elements of $\F$ as functions on~$\Curve$.

The real closed field $\kk$ has a natural topology induced by its (unique) ordering. This topology extends to the affine space $\AA^n\kk$ and further to the projective space~$\PP^n\kk$. Every embedding of $\Curve$ in $\PP^n\kk$ induces a topology on $\Curve$. Following \cite{BCR98}, we call it the \term{euclidean topology} (note that in \cite{Kne76a, Kne76b} this is called the \term{strong topology}). This is the coarsest topology with respect to which all functions from $\F$ are continuous.

Recall (see e.g.\ \cite[Theorem~2.4.4]{BCR98}) that $\Curve$ is a disjoint union of finitely many semi-algebraically connected components $\Curve_1, \dotsc, \Curve_N$. These components can be separated by functions from $\F$ (see \cite[Theorem~2.10]{Kne76a}) in the sense that for every $\Curve_i$ there exists $\csf{i}\in K$ such that
\[
\sgn \csf{i} (\gp) = 
\begin{cases}
1 & \text{if } \gp\in \Curve\setminus \Curve_i,\\
-1 & \text{if } \gp\in \Curve_i.
\end{cases}
\]
The functions $\csf{i}$ are called \term{component separating functions} and are determined uniquely up to  multiplication by sums of squares. Each component is homeomorphic to the projective line $\PP^1\kk$ (or equivalently, to a circle over $\kk$),
hence it admits two orientations. Consequently, $\Curve$ admits a total of $2^N$ possible orientations. Any such orientation is uniquely determined by a definite differential on $\Curve$. In fact, orientations may be identified with equivalence classes of definite differentials (see \cite[\S5]{Kne76a}). 
\begin{quote}
\emph{We assume that an orientation of $\Curve$ is arbitrarily chosen and fixed.}
\end{quote}
This lets us define intervals on $\Curve$ (see \cite[\S6]{Kne76a}). Let $\gp, \gq$ be two points belonging to the same component $\Curve_i$ of $\Curve$. An open interval $(\gp, \gq)$ consists of all the points $\gr\in \Curve$ that lie properly between $\gp$ and $\gq$ with respect to the fixed orientation. We shall need the following two results:

\begin{thm}[{\cite[Theorem~3.4]{Kne76a}}]\label{thm_even_zeros}
For every $f\in \F$ and every component $\Curve_i$ of $\Curve$, the number of points $\gp\in \Curve_i$ at which $f$ changes sign is finite and even.
\end{thm}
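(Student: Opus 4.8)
The plan is to treat the two assertions — finiteness and parity — separately, the first being essentially formal and the second topological–combinatorial. I would begin by discarding the trivial case $f=0$ and recalling the standard fact that a nonzero $f\in\F$ has only finitely many zeros and poles on $\Curve$; call that finite set $Z$. Since the euclidean topology is the coarsest one making every element of $\F$ continuous, $f$ is continuous and nowhere vanishing on $\Curve\setminus Z$, hence of locally constant sign there. Consequently $f$ cannot change sign at a point outside $Z$, so the set of points of $\Curve_i$ at which $f$ changes sign is contained in the finite set $Z\cap\Curve_i$. (At a pole one reads the sign of $f$ off a small punctured neighbourhood, on which $f$ is finite and nonzero because zeros and poles are isolated; equivalently one may argue with $1/f$, which has the same sign wherever both are defined and nonzero.)

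For the parity statement I would exploit that each component $\Curve_i$ is homeomorphic to a circle over $\kk$. Write $Z\cap\Curve_i=\{\gq_1,\dots,\gq_r\}$. If $r=0$ then $f$ has constant sign on the connected set $\Curve_i$ and there are no sign changes, so one may assume $r\geq 1$. Deleting the $r$ points from the circle $\Curve_i$ splits $\Curve_i\setminus Z$ into exactly $r$ semi-algebraically connected open arcs $B_1,\dots,B_r$, which I would label cyclically (following the fixed orientation) so that $B_k$ and $B_{k+1}$ are the two arcs flanking $\gq_{k+1}$, indices read mod $r$. On each $B_k$ the function $f$ is continuous and nowhere zero on a connected set, hence has a single sign $\sigma_k\in\{+1,-1\}$ — the $B_k$-preimages of the positive and of the negative elements of $\kk$ are disjoint open sets covering $B_k$. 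Since a sufficiently small punctured neighbourhood of $\gq_{k+1}$ is contained in $B_k\cup B_{k+1}$, the function $f$ changes sign at $\gq_{k+1}$ exactly when $\sigma_k\neq\sigma_{k+1}$. Hence the number of points of $\Curve_i$ at which $f$ changes sign equals $\#\{\,k\bmod r:\sigma_k\neq\sigma_{k+1}\,\}$.

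It then remains to observe that this last number is even, which is purely combinatorial: the cyclic product $\prod_{k=1}^{r}\sigma_k\sigma_{k+1}$ (indices mod $r$) equals $\prod_{k=1}^{r}\sigma_k^2=1$, while each factor $\sigma_k\sigma_{k+1}$ is $+1$ precisely when $\sigma_k=\sigma_{k+1}$ and $-1$ otherwise; so an even number of factors equal $-1$, i.e.\ an even number of sign changes occur. The only genuinely delicate point in this plan is the semi-algebraic bookkeeping on the circle $\Curve_i$: that removing $r$ points yields precisely $r$ connected arcs, and that a sign change at $\gq_{k+1}$ is faithfully witnessed by the signs on the two flanking arcs. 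Both facts rest on the cited identification of the components of $\Curve$ with circles over $\kk$ together with the intermediate value property for continuous semi-algebraic functions; once these are granted, the rest of the argument is elementary.
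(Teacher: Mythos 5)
The paper itself contains no proof of this statement: it is imported verbatim from \cite[Theorem~3.4]{Kne76a}, so there is no internal argument to compare yours against. Judged on its own terms, your proof is correct. Sign changes of a nonzero $f$ can occur only at its finitely many zeros and poles on $\Curve_i$, which gives finiteness; deleting these $r$ points from $\Curve_i$, which is semi-algebraically homeomorphic to $\PP^1\kk$, leaves exactly $r$ open arcs, each semi-algebraically connected, and since $\{f>0\}$ and $\{f<0\}$ are disjoint open \emph{semi-algebraic} subsets covering each arc $B_k$, the function has a constant sign $\sigma_k$ there; the parity then follows from the cyclic identity $\prod_{k}\sigma_k\sigma_{k+1}=\prod_{k}\sigma_k^2=1$. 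The essential precaution, which you rightly flag, is that over a possibly non-archimedean $\kk$ ordinary topological connectedness is useless (the euclidean topology on $\Curve_i$ is then totally disconnected), so every connectedness claim --- that an arc has constant sign, and that removing $r$ points leaves exactly $r$ arcs --- must be read semi-algebraically, as in \cite{BCR98}; your argument does respect this, since the relevant sets are cut out by sign conditions on elements of $\F$ and are therefore semi-algebraic. Granting those standard facts, your topological--combinatorial argument is a legitimate, self-contained and elementary substitute for the citation to Knebusch.
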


\begin{thm}[{\cite[Theorem~4.5]{Kne76a}}]\label{thm_function_from_points}
Given an even number of points in each component $\Curve_i$ of $\Curve$, there exists a function $f\in \F$ which changes sign precisely at these points.
\end{thm}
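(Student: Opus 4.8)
The plan is to reduce the statement (which is a converse to Theorem~\ref{thm_even_zeros}) to the special case of exactly two points lying in a single component, and then to realize such a pair as the ``odd part'' of a principal divisor. First I would record the local fact underlying Theorem~\ref{thm_even_zeros}: if $\gp$ is a point of $\Curve$ and $t\in\CO_\gp$ is a uniformizer, then $t$ changes sign at $\gp$ in the euclidean topology, so writing $f=t^{n}u$ with $n=v_\gp(f)$ and $u$ a unit of $\CO_\gp$ (hence of constant nonzero sign near $\gp$), the function $f$ changes sign at $\gp$ if and only if $v_\gp(f)$ is odd. Hence, given a set $S$ with $|S\cap\Curve_i|$ even for every $i$, it suffices to produce $f\in\F$ whose divisor has odd coefficient at each point of $S$ and even coefficient at every other point of $\Curve$; the coefficients at the non-real points are irrelevant, since sign changes occur only on the $\Curve_i$. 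Because each $|S\cap\Curve_i|$ is even, I can split $S$ into pairs, each pair contained in one component. If for every pair I can produce a function changing sign at exactly the two points of that pair and nowhere else, then the product of all these functions has odd order precisely at the points of $S$ --- at any point outside its own pair each factor has even (possibly zero) order --- and is the required $f$. So it remains to treat $S=\{\gp,\gq\}$ with $\gp,\gq$ in the same component $\Curve_i$.

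For such a pair I would proceed as follows, granting for the moment the key fact that $[\gp-\gq]\in 2\,\mathrm{Pic}^0(\Curve)(\kk)$, say $[\gp-\gq]=2c$. By Riemann--Roch and a routine general-position argument, choose a divisor $E$ of degree $0$, supported away from $\gp$ and $\gq$, with divisor class $c$. Then $\gp-\gq-2E$ has class $0$ in $\mathrm{Pic}(\Curve)$, so it equals $\mathrm{div}(f)$ for some $f\in\F$. Since $E$ avoids $\gp$ and $\gq$, the divisor $\mathrm{div}(f)$ has coefficient $1$ at $\gp$, coefficient $-1$ at $\gq$, and an even coefficient at every other point; by the first paragraph, $f$ changes sign at exactly $\gp$ and $\gq$. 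Forming the product of such functions over the pairs from the first paragraph then yields the theorem.

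The step I expect to be the real obstacle is the claim $[\gp-\gq]\in 2\,\mathrm{Pic}^0(\Curve)(\kk)$ for $\gp,\gq$ in the same component, since this is precisely where the \emph{real} structure enters. Fix a base point $\gq_0\in\Curve_i$ and consider the Abel--Jacobi morphism $\gr\mapsto[\gr-\gq_0]$ from $\Curve$ to its Jacobian $J=\mathrm{Pic}^0(\Curve)$; this is defined over $\kk$ and on $\kk$-points induces a continuous semialgebraic map $\Curve(\kk)\to J(\kk)$ sending the semialgebraically connected component $\Curve_i$ to a connected subset of $J(\kk)$ containing $0=[\gq_0-\gq_0]$, hence into the identity component $J(\kk)^0$. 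Since $\kk$ has characteristic $0$, multiplication by $2$ on $J$ is an étale isogeny, hence a local homeomorphism on $\kk$-points for the euclidean topology; its image $2J(\kk)$ is therefore an open subgroup of $J(\kk)$, and an open subgroup is closed and so contains $J(\kk)^0$. Consequently $[\gp-\gq_0]$ and $[\gq-\gq_0]$, hence their difference $[\gp-\gq]$, lie in $2J(\kk)=2\,\mathrm{Pic}^0(\Curve)(\kk)$. Turning this into a rigorous argument --- in particular verifying that the euclidean connected components of $\Curve(\kk)$ are exactly the $\Curve_i$ and that these algebraic-group facts survive the passage from $\RR$ to an arbitrary real closed field (where one cannot simply invoke ``$J(\kk)^0$ is a real torus'') --- is the technical heart; everything else is formal.
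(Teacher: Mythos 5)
Your proposal is correct in outline, but there is nothing in the paper to compare it with step by step: the paper does not prove this statement at all, it imports it verbatim from Knebusch (\cite[Theorem~4.5]{Kne76a}), the authors having announced in Section~\ref{sec_real_curves} that proofs of the quoted facts are omitted. So the honest comparison is with Knebusch's original treatment, which stays inside his semialgebraic theory of curves over real closed fields, whereas you give an independent algebro-geometric argument. Your reduction is sound: the parity criterion (sign change at a real point iff $v_\gp(f)$ is odd, via a uniformizer and the semialgebraic inverse function theorem), the pairing of the prescribed points inside each component, and the remark that non-real closed points and even-order real points contribute no sign changes do reduce everything to finding, for $\gp,\gq$ in one component, an $f$ with $\mathrm{div}(f)\equiv\gp-\gq\pmod 2$; note that your $f$ has an odd-order pole rather than a zero at $\gq$, which the statement permits. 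The crux is, as you say, $[\gp-\gq]\in 2J(\kk)$, and your clopen-subgroup argument does survive the passage to an arbitrary real closed field once phrased semialgebraically: $2J(\kk)$ is semialgebraic by Tarski--Seidenberg, open because multiplication by $2$ is \'etale, hence closed as a subgroup, hence contains the semialgebraically connected component of the identity, which in turn contains the image of the semialgebraically connected set $\Curve_i$ under the Abel--Jacobi map based at a point of $\Curve_i$ (the $\Curve_i$ are the semialgebraically connected components by definition, so that worry is vacuous); alternatively one can transfer from $\RR$ by Tarski's principle after bounding the degree of the required $f$ via Riemann--Roch. One refinement: to write $[\gp-\gq]=2[E]$ with $E$ a $\kk$-rational divisor you should invoke not Riemann--Roch alone but the fact that $\Curve$ has a $\kk$-rational point, so that $J(\kk)$ coincides with the divisor class group over $\kk$ (no Brauer obstruction); avoiding the support of $\gp,\gq$ is actually unnecessary, since a coefficient $1-2e$ is still odd. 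What your route buys is a self-contained conceptual proof (2-divisibility of the identity component of the real Jacobian); what the citation buys the paper is brevity and an argument that does not presuppose the theory of Jacobians.
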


In particular, it follows that for every interval $(\gp, \gq)\subset \Curve_i$, there is a function $\intf{\gp,\gq}\in \F$ satisfying:
\[
\sgn \intf{\gp,\gq}(\gr) =
\begin{cases}
1 & \text{if } \gr\notin [\gp, \gq],\\
0 & \text{if } \gr \in \{ \gp, \gq\},\\
-1 & \text{if } \gr\in (\gp, \gq).
\end{cases}
\]
This function is called an \emph{interval function} of $(\gp, \gq)$ and---like $\csf{i}$ above---it is unique up to  multiplication by  sums of squares. In particular, it is safe to talk about the sign of $\intf{\gp,\gq}$ or $\csf{i}$ with respect to an ordering of $\F$.

We will need the following, rather basic, fact. Unfortunately, we are not aware of any convenient reference, hence we feel obliged to prove it explicitly.

\begin{prop}\label{prop_piecewise_monotonic}
For every nonconstant function $f \in \F$ and every component $\Curve_i\subseteq \Curve$ there are finitely many points $\gp_0, \dotsc, \gp_m\in \Curve_i$ such that $f$ is  monotonic and has no poles on every interval between two consecutive points.
\end{prop}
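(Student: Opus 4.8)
The plan is to reduce the statement to two separate finiteness assertions: that $f$ has finitely many poles on $\Curve_i$, and that $f$ has finitely many "turning points" on $\Curve_i$, i.e. points at which $f$ stops being monotonic. The first is classical: a nonconstant $f \in \F$ has only finitely many poles on the whole complete curve $\Curve$, since the pole divisor of a nonzero rational function on a complete smooth curve has finite degree. So the real work is isolating the turning points, and for that I would exploit the fact that $f$ changes from increasing to decreasing (or vice versa) exactly where its derivative changes sign.

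First I would fix a separating (nonconstant) element, or better, work with a definite differential $\omega$ determining the chosen orientation; since $\Curve_i \cong \PP^1\kk$, the space of differentials is one-dimensional over $\F$, so the differential $df$ of $f$ can be written $df = g\,\omega$ for a unique $g \in \F$. The key geometric observation is that on any interval of $\Curve_i$ avoiding the poles of $f$ and of $g$, and on which $g$ does not change sign, the function $f$ is monotonic: the sign of $g$ with respect to the fixed orientation records whether $f$ is increasing or decreasing along that interval (this is the curve analogue of the mean value theorem / the statement that a function with nonvanishing nonnegative derivative is nondecreasing, valid over any real closed field by the Tarski transfer principle applied locally, or directly via the order structure on the interval which is isomorphic to an interval of $\kk$). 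Then by Theorem~\ref{thm_even_zeros} applied to $g$, the function $g$ changes sign at only finitely many points of $\Curve_i$; together with the finitely many poles of $f$ and of $g$ on $\Curve_i$, these points cut $\Curve_i$ into finitely many intervals. On each such interval $f$ has no poles and $g$ has constant sign, hence $f$ is monotonic there, which is exactly the claim with $\gp_0, \dotsc, \gp_m$ taken to be this finite set of points (indexed so that consecutive ones bound the intervals).

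The step I expect to be the main obstacle is making rigorous the claim that constancy of $\sgn g$ on an interval forces monotonicity of $f$ on that interval, since we are over an arbitrary, possibly nonarchimedean, real closed field $\kk$ and there is no analytic machinery available. I would handle this by transporting the situation to $\PP^1\kk$: composing with a homeomorphism $\Curve_i \to \PP^1\kk$ (respecting orientation) we may assume the interval is a genuine interval $I \subseteq \kk$, that $f$ is a rational function $\kk \dashrightarrow \kk$ regular on $I$, and that $g = f'$ is its usual derivative, whose sign on $I$ is constant. Then the desired implication is a first-order statement about real closed fields (for a polynomial/rational identity one can clear denominators and phrase "if $f'\ge 0$ throughout $[a,b]$ then $f(a)\le f(b)$" as an elementary sentence), so it holds over $\kk$ because it holds over $\RR$ by the classical mean value theorem; applying this to every subinterval $[a,b]\subseteq I$ yields monotonicity on $I$. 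One should also note the harmless point that $df$ could be identically zero only if $f$ is constant, which is excluded by hypothesis, so $g\neq 0$ and Theorem~\ref{thm_even_zeros} indeed applies.

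A minor bookkeeping point to address at the end: we want $f$ to have \emph{no poles} on the \emph{open} intervals between consecutive $\gp_j$, which is automatic since all poles of $f$ on $\Curve_i$ were included among the $\gp_j$; and we want monotonicity on each such open interval, which we have established. If one prefers the $\gp_j$ to be distinct and linearly ordered around the circle $\Curve_i$, that is achieved simply by discarding repetitions and listing the finite point set in cyclic order compatible with the orientation. This completes the proof outline.
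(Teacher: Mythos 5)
Your skeleton is the same as the paper's: write $df=f'\omega_0$ for the differential $\omega_0$ fixing the orientation, cut $\Curve_i$ at the (finitely many) poles of $f$, poles of $f'$ and sign changes of $f'$ (finite by Theorem~\ref{thm_even_zeros}), and conclude monotonicity of $f$ on each resulting interval from the constancy of $\sgn f'$ there. The difference is in how that last implication is justified, and this is where your argument has a genuine gap. The paper outsources exactly this step to Knebusch's differential calculus on real curves, quoting \cite[Theorem~6.8]{Kne76a} (constancy of the sign of $f'$ on the intervals) and \cite[Corollary~8.4]{Kne76b} (the criterion ``$\sgn f'=1$ on an interval $\Rightarrow$ $f$ strictly increasing there''). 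Your proposed substitute --- transport everything to $\PP^1\kk$ by the homeomorphism $\Curve_i\cong\PP^1\kk$ and then invoke Tarski transfer of the mean value theorem for rational functions --- does not reduce to the situation you transfer. That homeomorphism is only a semialgebraic one; it is not induced by an isomorphism of curves unless $\F$ is rational, so after transporting, $f$ is a semialgebraic function of the coordinate on an interval of $\kk$, not a rational one, and you cannot ``clear denominators'' to get an elementary sentence about polynomials. Worse, even granting some Nash parametrization of the interval, identifying your $g$ (defined abstractly by $df=g\,\omega_0$) with the \emph{usual derivative} of the transported function requires a chain rule relating $\omega_0$ to the parametrization and the positivity of the parametrization's derivative in the orientation --- precisely the nontrivial content that Knebusch's results supply and that your ``we may assume'' silently presupposes. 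So as written, the key step is asserted rather than proved.

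Two minor points: the one-dimensionality of the space of differentials over $\F$ holds because $\F$ has transcendence degree $1$ over $\kk$ (characteristic $0$), not because $\Curve_i\cong\PP^1\kk$; and your remark that $df=0$ would force $f\in\kk$ is correct and is indeed needed before applying Theorem~\ref{thm_even_zeros} to $f'$. If you want a proof independent of Knebusch's calculus, a cleaner route than the transfer argument would be to drop derivatives altogether and use the semialgebraic monotonicity (cell decomposition) theorem for one-variable semialgebraic functions applied to $f$ composed with a semialgebraic parametrization of $\Curve_i$; but the derivative-based route you chose needs the cited results to close.
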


\begin{proof}
Recall that an orientation of $\Curve$ is fixed. Let $\omega_0$ be a differential of $\F$ associated with this orientation. Then there is a unique element $f'\in \F$ such that
\[
df = f'\cdot \omega_0.
\]
Fix a component $\Curve_i\subseteq \Curve$ and let $\setP$ be the set of all the points $\gp\in \Curve_i$ where either $f$ has a pole or $f'$ has a pole or
$f'$ changes the sign. Every nonzero function has a finite number of poles; together with Theorem~\ref{thm_even_zeros} this shows that $\setP$ consists of a finite  number of points, say, $\setP = \{ \gp_0, \dotsc, \gp_m\}$. Assume that the indexation of the $\gp_i$'s is coherent with the orientation of $\Curve$ (i.e., the set $\Curve_i\setminus \{\gp_0\}$ is totally ordered by the orientation and $\gp_1\prec \gp_2\prec \dotsb \prec \gp_m$ holds with respect to this ordering). It follows from \cite[Theorem~6.8]{Kne76a} that $\sgn_\gq f'$ is constant and non-zero for every $\gq\in (\gp_j, \gp_{j+1})$ and every $j\in \{0, \dotsc, m\}$ (here, for simplicity, we use indexation modulo $m+1$, so that $\gp_{m+1} = \gp_0$). Now, \cite[Corollary~8.4]{Kne76b} asserts that $f$ is strictly increasing on $(\gp_j, \gp_{j+1})$ when $\sgn_\gq f' = 1$ for $\gq \in (\gp_j, \gp_{j+1})$, and strictly decreasing otherwise. This concludes the proof.
\end{proof}

Recall (see e.g.\ \cite[Chapter~2]{Lam83}) that an ordering $\ordering$ of $\F$ is said to be compatible with a valuation ring $\CO_\gp$, or shortly with a point $\gp$, if $1+\gp \subseteq \ordering$. Take $\gp\in \Curve_i$. It is straightforward to check that the following subsets of $\F$ are orderings compatible with $\gp$:
\begin{equation}\label{eq_compatible_orderings}
\begin{split}
\ordering^+_\gp &= \Bigl\{ f\in \F\st \exists_{\gq\in \Curve_i} \forall_{\gr\in (\gp, \gq)} f(\gr) >0 \Bigr\},\\
\ordering^-_\gp &= \Bigl\{ f\in \F\st \exists_{\gq\in \Curve_i} \forall_{\gr\in (\gq, \gp)} f(\gr) >0 \Bigr\}.
\end{split}
\end{equation}
It follows from  the  Baer-Krull theorem that these are the only orderings compatible with $\gp$. We call them \term{principal} and we denote
\[
\Orderings_{princ}(\F):=\big\{\ordering^+_\gp, \ordering^-_\gp\st \gp \in \Curve\bigr\}.
\] 
The following important fact holds:
\begin{thm}[{\cite[Theorem~9.9]{Pre84}}]\label{thm_density}
The set $\Orderings_{princ}(\F)$ of principal orderings is dense in the set $\Orderings(\F)$ of all orderings. 
\end{thm}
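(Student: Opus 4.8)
The plan is to reduce the statement to basic open sets and then appeal to a Positivstellensatz. Since the Harrison sets $H(a)$ form a subbasis of the topology of $\Orderings(\F)$, it suffices to show that every nonempty finite intersection $U := H(a_1)\cap\dots\cap H(a_n)$, with $a_1,\dots,a_n\in\dot{\F}$, contains a principal ordering. So fix an ordering $\ordering\in U$; then $a_1,\dots,a_n$ are all $\ordering$-positive.

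The crux of the proof is to produce a point $\gp\in\Curve$, lying off the (finite) zero-and-pole locus of $a_1,\dots,a_n$, at which all the $a_i$ are positive. I would argue by contradiction. Let $A$ be the coordinate ring of the affine curve obtained from $\Curve$ by deleting all zeros and poles of $a_1,\dots,a_n$; it is a finitely generated $\kk$-algebra with fraction field $\F$, and each $a_i$ is a unit in $A$. If no point as claimed existed then, since for units ``$a_i\ge 0$'' and ``$a_i>0$'' agree at real points, the basic closed semialgebraic set $\{a_1\ge 0\}\cap\dots\cap\{a_n\ge 0\}$ of real points of this affine curve would be empty. Stengle's Positivstellensatz in the empty-set case (see e.g.\ \cite[Ch.~4]{BCR98}) would then place $-1$ in the preordering of $A$ generated by $a_1,\dots,a_n$, hence in the preordering of $\F$ generated by them, contradicting $\{a_1,\dots,a_n\}\subseteq\ordering$. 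Thus the desired point $\gp$ exists.

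To conclude, note that $a_i$ is regular and nonzero at $\gp$, hence continuous in the euclidean topology on a neighbourhood of $\gp$ and positive there; therefore $a_i>0$ on $(\gp,\gq)$ for every $\gq$ sufficiently close to $\gp$ on the component of $\gp$. Choosing a single $\gq$ that works simultaneously for $i=1,\dots,n$ (there are only finitely many) and comparing with~\eqref{eq_compatible_orderings}, we get $a_i\in\ordering^+_\gp$ for all $i$, i.e.\ $\ordering^+_\gp\in U\cap\Orderings_{princ}(\F)$, which finishes the proof.

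The only non-formal step is the passage from ``some ordering of $\F$ makes $a_1,\dots,a_n$ positive'' to ``some real point of $\Curve$ does so'', and this is exactly where a Positivstellensatz-type input is genuinely needed and where the finiteness of $a_1,\dots,a_n$ (extracted from the reduction to a basic open set) is used essentially: the analogous statement for infinitely many functions is false, since for nonarchimedean $\kk$ the cut of $\kk$ separating the infinitesimals from the remaining positive elements yields an ordering of $\kk(x)$ that sits at a place with no nearby real point and is not of the form $\ordering^+_\gp$ or $\ordering^-_\gp$. One could also begin by disposing of the orderings $\ordering$ whose natural valuation over $\kk$ (that is, the convex hull $\conv_\ordering(\kk)$) is nontrivial: such a valuation ring then has the form $\CO_\gp$ for a residually real point $\gp\in\Curve$, with which $\ordering$ is compatible, so $\ordering$ equals $\ordering^+_\gp$ or $\ordering^-_\gp$ by the Baer--Krull theorem and is already principal; one is then left only with the orderings archimedean over $\kk$, but these still require an argument of the above kind.
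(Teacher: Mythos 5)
The paper itself gives no proof of this theorem—it is quoted from Prestel \cite{Pre84}—so there is nothing in-paper to compare against; judged on its own, your argument is correct. The reduction to a nonempty basic open set $H(a_1)\cap\dots\cap H(a_n)$, the Positivstellensatz step producing a $\kk$-point of the affine curve at which all $a_i>0$, and the final continuity argument yielding $a_i\in\ordering^+_\gp$ via \eqref{eq_compatible_orderings} are all sound. Note that what you actually establish on the way is precisely Proposition~\ref{density_strict}, so your route inverts the paper's logic: the paper deduces that proposition from the cited density theorem, whereas you prove the proposition directly and then read off density. This is also essentially the standard proof of Prestel's result, whose argument rests on the Artin--Lang homomorphism theorem (equivalently the Tarski transfer principle), the same real-algebraic input as the empty-set form of Stengle's Positivstellensatz that you invoke; so the underlying engine is the same, only packaged differently. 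Two small points to tidy up: first, to obtain an affine curve you must delete a \emph{nonempty} finite set of closed points, so either treat separately the trivial case where all $a_i\in\kk$ (then every principal ordering already lies in $U$) or delete one additional arbitrary point; second, you should say explicitly that the real points of the paper are exactly the $\kk$-rational points of the smooth projective model, so that the set fed into the Positivstellensatz is indeed the basic closed semialgebraic set of $\kk$-points of an affine $\kk$-variety whose coordinate ring $A\subset\F$ has fraction field $\F$ and contains the $a_i$ as units. Your closing remark is also accurate: orderings whose natural valuation over $\kk$ is nontrivial are compatible with some $\CO_\gp$ with $\gp\in\Curve$ and hence are already principal by Baer--Krull, so only the orderings in which $\kk$ is cofinal genuinely require the Positivstellensatz/Artin--Lang step.
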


From this theorem we infer:

\begin{prop}\label{density_strict}
Suppose that $f_1,\dotsc ,f_n \in \F$ and $H(f_1) \cap\dotsb \cap H(f_n) \neq \emptyset$. Then there is $\gp_0 \in \Curve$ such that $f_k(\gp_0) >0$ for $k=1,\dotsc ,n$.
\end{prop}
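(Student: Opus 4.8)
The plan is to reduce the statement to the existence of a suitable point on the curve, by invoking the density of principal orderings (Theorem~\ref{thm_density}) and then reading the point off from the explicit description~\eqref{eq_compatible_orderings}. First I would observe that $U:=H(f_1)\cap\dotsb\cap H(f_n)$ is a nonempty open subset of $\Orderings(\F)$, being a finite intersection of subbasic Harrison sets; hence by Theorem~\ref{thm_density} it contains some principal ordering $\ordering$. Say $\ordering=\ordering^+_\gp$ with $\gp$ lying in a component $\Curve_i$ of $\Curve$; the case $\ordering=\ordering^-_\gp$ is entirely symmetric, the intervals $(\gp,\gq)$ below being replaced throughout by intervals $(\gq,\gp)$.

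Next I would unpack what $\ordering\in H(f_k)$ says: since $f_k\in\ordering^+_\gp$, for each $k\in\{1,\dotsc,n\}$ there is a point $\gq_k\in\Curve_i$ with $\gq_k\neq\gp$ and $f_k(\gr)>0$ for every $\gr\in(\gp,\gq_k)$. I would then use that $\Curve_i$ is homeomorphic to $\PP^1\kk$ and that intervals on $\Curve$ are defined via the fixed orientation: the intersection $\bigcap_{k=1}^{n}(\gp,\gq_k)$ is again an interval of the form $(\gp,\gq)$, where $\gq$ is whichever of $\gq_1,\dotsc,\gq_n$ is reached first on traversing $\Curve_i$ in the positive direction starting at $\gp$ (equivalently, the arcs $(\gp,\gq)$ with $\gq\in\Curve_i\setminus\{\gp\}$ form a chain under inclusion). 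On this interval $(\gp,\gq)$ all of $f_1,\dotsc,f_n$ are simultaneously positive.

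To finish, I would note that $(\gp,\gq)$ is a nonempty interval of $\Curve_i\cong\PP^1\kk$, and, $\kk$ being real closed and hence densely ordered, such an interval is in fact infinite; picking any point $\gp_0\in(\gp,\gq)$ then yields $f_k(\gp_0)>0$ for $k=1,\dotsc,n$, as required. The only step I expect to require genuine care is the claim that a finite intersection of one-sided neighbourhoods $(\gp,\gq_k)$ of $\gp$ in $\Curve_i$ again contains such a neighbourhood; this is immediate from the identification of $\Curve_i$ with $\PP^1\kk$ carrying the fixed orientation, but it is the one point at which the geometry of the curve is actually used, the remainder being a routine combination of Theorem~\ref{thm_density} with the definitions.
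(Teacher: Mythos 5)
Your proposal is correct and follows essentially the same route as the paper: invoke Theorem~\ref{thm_density} to find a principal ordering $\ordering^{\pm}_\gp$ in $H(f_1)\cap\dotsb\cap H(f_n)$, extract the one-sided intervals $(\gp,\gq_k)$ from~\eqref{eq_compatible_orderings}, take the smallest one (your ``first point reached'' is exactly the paper's choice of $\gq$ with no other $\gq_k$ between $\gp$ and $\gq$), and pick any $\gp_0$ in the resulting nonempty interval. Your explicit justification that the arcs $(\gp,\gq)$ form a chain and that the interval is nonempty only makes explicit what the paper leaves implicit.
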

\begin{proof}
By Theorem \ref{thm_density} there is a principal ordering $\ordering^+_\gp$ or $\ordering^-_\gp$ in $H(f_1) \cap\dotsb \cap H(f_n)$. Consider the case of $\ordering^+_\gp$ (for $\ordering^-_\gp$ the proof is symmetric). Assume that $\gp \in \Curve_i$. By (\ref{eq_compatible_orderings}), for every $f_k$ there is $\gq_k\in \Curve_i$, $\gq_k \neq \gp$ and such that $f_k$ is positive on the interval $(\gp,\gq_k)$. Among the  $\gq_k$'s there is one, say $\gq$, such that no other $\gq_k$ lies between $\gp$ and $\gq$. Then we have that $(\gp,\gq) \neq \emptyset$ and that each $f_k$ is positive on the interval $(\gp,\gq)$. Any point 
 $\gp_0 \in (\gp,\gq)$ has the required property.
\end{proof}

\begin{prop}
For every ordering $\ordering$ of $\F$ there is exactly one component $\Curve_i$ of~$\Curve$ such that $\csf{i}\in -\ordering$.
\end{prop}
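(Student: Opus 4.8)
The plan is to show that the property ``$\csf{i} \in -\ordering$'' partitions $\Orderings(\F)$ according to the components, by proving existence and uniqueness separately. For \textbf{uniqueness}, suppose $\csf{i} \in -\ordering$ and $\csf{j} \in -\ordering$ with $i \neq j$. Recall that $\csf{i}$ is negative exactly on $\Curve_i$ and positive on $\Curve \setminus \Curve_i$, and similarly for $\csf{j}$. Hence on $\Curve_i$ (which is disjoint from $\Curve_j$, so contained in $\Curve \setminus \Curve_j$) the product $\csf{i}\csf{j}$ is negative, and on $\Curve_j$ it is negative as well, while on $\Curve \setminus (\Curve_i \cup \Curve_j)$ it is positive. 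In particular $-\csf{i}\csf{j} \in H(-\csf{i}\csf{j})$ is realized by the principal ordering at any point outside $\Curve_i \cup \Curve_j$ (such a point exists only if $N > 2$; if $N = 2$ one argues directly). The clean way: since $\csf{i}, \csf{j} \in -\ordering$, we get $\csf{i}\csf{j} \in \ordering$. But I claim $-\csf{i}\csf{j}$ is a sum of squares, or at least lies in every ordering, giving a contradiction --- more carefully, $\csf{i}\csf{j}$ takes value $-1$ somewhere (e.g.\ on $\Curve_i$), hence $-\csf{i}\csf{j} \in \ordering^{+}_{\gp}$ for suitable $\gp \in \Curve_i$, so $H(-\csf{i}\csf{j}) \ne \emptyset$; but this alone is not yet a contradiction with $\csf{i}\csf{j} \in \ordering$. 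The correct contradiction: consider instead that $\csf{i} + \csf{j}$, which is $\le 0$ everywhere on $\Curve$ and strictly negative on $\Curve_i \cup \Curve_j$, must then be $\le 0$ in $\ordering$; but $\csf{i}, \csf{j} \in -\ordering$ forces $\csf{i} + \csf{j} \in -\ordering$ too, which is consistent --- so that is not it either. I will instead use: $-\csf{i} \in \ordering$ and the sign pattern shows $-\csf{i} - \csf{j}\csf{i}$ has constant sign; the cleanest is to observe that $\csf{j}\cdot(-\csf{i})$ equals $-\csf{i}$ on $\Curve_i$ up to a positive factor and note $1 + \csf{j} > 0$ off $\Curve_j$...

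Let me restart the uniqueness argument with the right idea. The key observation is that $\csf{1}, \dotsc, \csf{N}$ can be normalized so that $\csf{1} \cdots \csf{N}$ changes sign nowhere (it is negative exactly on each $\Curve_i$ with odd... no): actually the product $\csf{i}\csf{j}$ for $i \ne j$ is negative on $\Curve_i$, negative on $\Curve_j$, positive elsewhere --- so $-\csf{i}\csf{j}$ is positive on $\Curve_i \cup \Curve_j$ and negative off it, meaning $-\csf{i}\csf{j}$ is (up to squares) the component-type separating function for the union $\Curve_i \cup \Curve_j$. Now if both $\csf{i}, \csf{j} \in -\ordering$, then $\csf{i}\csf{j} \in \ordering$, i.e.\ $-\csf{i}\csf{j} \in -\ordering$. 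By Proposition~\ref{density_strict} applied to $\ordering$ (density of principal orderings), there is $\gp_0 \in \Curve$ with $(-\csf{i})(\gp_0) > 0$ and $(-\csf{j})(\gp_0) > 0$, hence $\csf{i}(\gp_0) < 0$ and $\csf{j}(\gp_0) < 0$; but $\csf{i}(\gp_0) < 0$ forces $\gp_0 \in \Curve_i$ and $\csf{j}(\gp_0) < 0$ forces $\gp_0 \in \Curve_j$, contradicting $\Curve_i \cap \Curve_j = \emptyset$. This is the argument; the main point is feeding $\{-\csf{i}, -\csf{j}\}$ into Proposition~\ref{density_strict}, which requires knowing $H(-\csf{i}) \cap H(-\csf{j}) \ne \emptyset$ --- but that is exactly the assumption $\csf{i}, \csf{j} \in -\ordering$, i.e.\ $\ordering \in H(-\csf{i}) \cap H(-\csf{j})$.

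For \textbf{existence}, I must show some $\csf{i} \in -\ordering$. Suppose not: then $\csf{i} \in \ordering$ for every $i = 1, \dotsc, N$, i.e.\ $\ordering \in H(\csf{1}) \cap \dotsb \cap H(\csf{N})$. By Proposition~\ref{density_strict} there is $\gp_0 \in \Curve$ with $\csf{i}(\gp_0) > 0$ for all $i$. But $\gp_0$ belongs to some component $\Curve_k$, and then $\csf{k}(\gp_0) < 0$ by the defining property of the component separating function $\csf{k}$ --- a contradiction. Hence at least one $\csf{i} \in -\ordering$, and combined with uniqueness this proves the proposition. The argument is short once Proposition~\ref{density_strict} is in hand; I do not foresee a genuine obstacle, the only care needed is in the bookkeeping of the sign conventions (the $\csf{i}$ are negative \emph{on} $\Curve_i$ and positive off it) and in noting that $\csf{i}$ is determined only up to multiplication by sums of squares, which is harmless since sums of squares lie in every ordering and are positive at every point of $\Curve$, so none of the sign statements above are affected.
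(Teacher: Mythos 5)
Your final argument is correct, and its two halves compare to the paper as follows. The uniqueness half is essentially the paper's own argument: from $\csf{i},\csf{j}\in-\ordering$ you get $\ordering\in H(-\csf{i})\cap H(-\csf{j})$, apply Proposition~\ref{density_strict} to obtain $\gp_0$ with $\csf{i}(\gp_0)<0$ and $\csf{j}(\gp_0)<0$, hence $\gp_0\in\Curve_i\cap\Curve_j=\emptyset$ (your sign bookkeeping here is the correct reading of the paper's step). The existence half, however, takes a genuinely different route: the paper observes that $\csf{1}\dotsm\csf{N}$ is negative at every point of $\Curve$, so $-\csf{1}\dotsm\csf{N}$ is positive definite and hence, by \cite[Theorem~4.1]{Kne76a}, a sum of squares lying in $\ordering$, which forces some $\csf{i}\in-\ordering$; you instead assume all $\csf{i}\in\ordering$, feed $H(\csf{1})\cap\dotsb\cap H(\csf{N})\ni\ordering$ into Proposition~\ref{density_strict}, and derive a contradiction at the component containing the resulting point. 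Your version has the merit of uniformity (both halves rest only on the density of principal orderings) and avoids invoking the representation of positive definite functions as sums of squares, at the cost of one more appeal to Theorem~\ref{thm_density} via Proposition~\ref{density_strict}; the paper's version is a one-line consequence of the Positivstellensatz-type fact it will use elsewhere anyway. The only criticism is editorial: the long chain of abandoned attempts in your first paragraph (the $\csf{i}\csf{j}$ and $\csf{i}+\csf{j}$ detours) should be deleted, since the argument you finally settle on is self-contained and correct as stated, and the remark that the $\csf{i}$ are only defined up to sums of squares is indeed harmless for all the sign statements used.
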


\begin{proof}
By the definition of the component separating function, for every $\gp \in \Curve$ we have $\csf{1}\dotsm \csf{N}(\gp)<0$. Therefore the function $s := -\csf{1}\dotsm \csf{N}$ is positive definite and by \cite[Theorem~4.1]{Kne76a} is a (nonzero) sum of squares in $\F$. Consequently it belongs to $\ordering$. Therefore, $-\csf{i} \in \ordering$ for at least one $i$.
 
Now assume that $-\csf{i} \in \ordering$ and $-\csf{j} \in \ordering$ for some $i \neq j$.  Then $H(-\csf{i}) \cap H(-\csf{j}) \neq \emptyset$ and, by Proposition~\ref{density_strict}, there is $\gp \in  \Curve$ such that $\csf{i}(\gp)> 0$ and $\csf{j}(\gp)> 0 $, a contradiction to the definition of the components separating functions. 
\end{proof}

The component $\Curve_i$ of the above lemma is said to be \term{associated} with the ordering~$\ordering$ of~$\F$.

\section{Cuts on real curves}\label{sec_cuts}
Recall that a \term{cut} of a totally ordered set $(A, \preceq)$ is a pair $(L, U)$ consisting of two subsets of~$A$ satisfying: $l\prec u$ for every $l\in L$ and $u\in U$. For any subset $S$ of a totally ordered set $A$ we denote:
\[
\lo S = \{a\in A \st  \exists_{s \in S}\ a\preceq s\},\qquad
\up S = \{a\in A \st  \exists_{s \in S}\ a \succeq s\}.
\]
Observe that for any $S$, taking $\up S$ as an upper cut set determines a cut in $A$ (the lower cut set is its complement). In a similar way, $\lo S$ determines a cut  as a lower cut set.

Consider a real curve $\Curve$ and its function field $\F$. In each component $\Curve_i$ of $\Curve$ we fix one point and denote it by $\Infty{i}$. The set $\Curve_i\setminus \{\Infty{i}\}$ is linearly ordered by the (fixed) orientation of the curve. Consequently, we may freely talk about cuts on $\Curve_i$. Explicitly, a \term{cut of a component} $\Curve_i$ is a pair $(\CL, \CU)$ of subsets $\CL, \CU\subset \Curve_i$ 
such that
\begin{itemize}
\item $\Curve_i$ is the disjoint union $\CL\,\dcup\; \CU\,\dcup\; \{\Infty{i}\}$, and
\item for every $\gl\in \CL$ and every $\gu\in \CU$, the point $\Infty{i}$ lies in the interval $(\gu, \gl)$.
\end{itemize}

Observe that for $\CL = \Curve_i\setminus \{\Infty{i}\}$ and $\CU = \emptyset$ (or symmetrically for $\CL = \emptyset$ and $\CU = \Curve_i\setminus \{\Infty{i}\}$) both conditions hold: the first one trivially and the second one vacuously. Thus $\bigl( \Curve_i\setminus \{\Infty{i}\}, \emptyset\bigr)$ and $\bigl( \emptyset, \Curve_i\setminus \{\Infty{i}\}\bigr)$ are admissible cuts on~$\Curve_i$. 

The set $\Curve_i\setminus \{\Infty{i}\}$ is totally ordered by the relation 
\[
\gp \prec_i \gq \Longleftrightarrow \Infty{i} \in (\gq, \gp).
\]
Therefore, the second condition of the above definition reads as: $\gl\prec_i  \gu$ for every $\gl\in \CL$ and every $\gu\in \CU$.

\begin{df}\label{df_principal_cut}
Let $\gp$ be a point of $\Curve_i$. The cuts
\begin{itemize}
\item $\upcut{\gp}$ with the lower cut set $\lo\{\gp\}$, if $\gp\neq \Infty{i}$,
\item $\locut{\gp}$ with the upper cut set $\up\{\gp\}$,
if $\gp\neq \Infty{i}$,
\item $\upcut{\Infty{i}}$ with the lower cut set $\emptyset$,
\item $\locut{\Infty{i}}$ with the upper cut set $\emptyset$,
\end{itemize}
are called \term{principal} cuts associated with the point $\gp$.
\end{df}

Explicitly, if $\gp\neq \Infty{i}$,
\[
\locut{\gp} = \bigl( (\Infty{i}, \gp), [\gp, \Infty{i})\bigr)
\qquad\text{and}\qquad 
\upcut{\gp} = \bigl( (\Infty{i}, \gp], (\gp, \Infty{i})\bigr)
\]

\begin{prop}\label{prop_ordering->cut}
Every ordering $\ordering$ of $\F$ defines a cut on the associated component~$\Curve_i$ by the formula:
\begin{equation}\label{eq_ordering->cut}
\otc(\ordering) = (\CL,\CU) \text{ with }
\begin{cases}
\CU = \bigl\{ \gp\in \Curve_i\setminus \{\Infty{i}\} \st \intf{\gp, \Infty{i}}\in \ordering\bigr\}\\
\CL = \bigl\{ \gp\in \Curve_i\setminus \{\Infty{i}\} \st \intf{\Infty{i}, \gp}\in \ordering\bigr\}.
\end{cases}
\end{equation}
\end{prop}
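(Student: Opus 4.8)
The plan is to verify directly that the pair $(\CL,\CU)$ produced by \eqref{eq_ordering->cut} satisfies the two defining conditions of a cut of $\Curve_i$: that $\Curve_i=\CL\dcup\CU\dcup\{\Infty{i}\}$, and that $\gl\prec_i\gu$ for all $\gl\in\CL$ and $\gu\in\CU$. I will use two things throughout: that $\Curve_i$ is the component \emph{associated with} $\ordering$, i.e.\ $-\csf{i}\in\ordering$; and that each $\Curve_i$ is homeomorphic to $\PP^1\kk$, so that three distinct points cut $\Curve_i$ into three arcs exactly as on a circle. The single tool driving every step is Proposition~\ref{density_strict}: finitely many elements of $\ordering$ have Harrison sets with nonempty common intersection (it contains $\ordering$), hence there is a point of $\Curve$ at which all of them are strictly positive. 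In each case I will feed such a point, together with the sign tables of suitable interval functions and of $\csf{i}$, into a contradiction --- the role of $\csf{i}$ being only to pin the auxiliary point down to $\Curve_i$. The one place requiring genuine care is the bookkeeping of arcs in the last step.

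For disjointness and covering: $\Infty{i}$ belongs to neither $\CL$ nor $\CU$ by construction. If $\gp$ were in $\CL\cap\CU$, then $\intf{\Infty{i},\gp},\intf{\gp,\Infty{i}},-\csf{i}\in\ordering$, so Proposition~\ref{density_strict} gives $\gp_0\in\Curve$ with $\intf{\Infty{i},\gp}(\gp_0)>0$, $\intf{\gp,\Infty{i}}(\gp_0)>0$ and $\csf{i}(\gp_0)<0$; the last inequality puts $\gp_0$ in $\Curve_i=[\Infty{i},\gp]\cup[\gp,\Infty{i}]$, contradicting the other two. If some $\gp\in\Curve_i\setminus\{\Infty{i}\}$ were in neither set, then (interval functions being nonzero) $-\intf{\Infty{i},\gp},-\intf{\gp,\Infty{i}},-\csf{i}\in\ordering$, and the same argument produces $\gp_0\in\Curve_i$ with $\intf{\Infty{i},\gp}(\gp_0)<0$ and $\intf{\gp,\Infty{i}}(\gp_0)<0$, i.e.\ $\gp_0\in(\Infty{i},\gp)\cap(\gp,\Infty{i})=\emptyset$, again impossible. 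Hence $\Curve_i=\CL\dcup\CU\dcup\{\Infty{i}\}$.

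For the order condition, suppose $\gl\in\CL$ and $\gu\in\CU$ but $\gl\not\prec_i\gu$. The points $\gl,\gu,\Infty{i}$ are pairwise distinct (distinctness of $\gl$ and $\gu$ comes from the disjointness just proved), so $\Infty{i}\notin(\gu,\gl)$ forces the cyclic order on $\Curve_i$ to be $\gl\to\Infty{i}\to\gu$. Since $\intf{\Infty{i},\gl},\intf{\gu,\Infty{i}},-\csf{i}\in\ordering$, Proposition~\ref{density_strict} yields $\gp_0\in\Curve_i$ with $\intf{\Infty{i},\gl}(\gp_0)>0$ and $\intf{\gu,\Infty{i}}(\gp_0)>0$, that is $\gp_0\notin[\Infty{i},\gl]$ and $\gp_0\notin[\gu,\Infty{i}]$. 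Within $\Curve_i$ the complement of $[\Infty{i},\gl]$ is the open arc $(\gl,\Infty{i})$ and the complement of $[\gu,\Infty{i}]$ is $(\Infty{i},\gu)$; for the cyclic order $\gl\to\Infty{i}\to\gu$ these are two distinct elementary arcs meeting only at $\Infty{i}$, hence disjoint, so $\gp_0$ cannot lie in both. This contradiction shows $\gl\prec_i\gu$, completing the proof that $(\CL,\CU)$ is a cut. As anticipated, the main obstacle is simply keeping the three arcs determined by $\gl,\gu,\Infty{i}$ straight and remembering to carry $\csf{i}$ along so that $\gp_0$ lands on $\Curve_i$ rather than on another component, where the interval functions of arcs of $\Curve_i$ are all positive and give no contradiction.
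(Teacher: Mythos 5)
Your proof is correct. It runs on the same engine as the paper's --- Proposition~\ref{density_strict} combined with the sign tables of the interval functions and of $\csf{i}$ --- but you deploy it differently. The paper's proof of the covering statement is direct and uses no density argument at all: when $\intf{\gp,\Infty{i}}\in-\ordering$ it invokes the identity $\intf{\Infty{i},\gp}=s\cdot\csf{i}\cdot\intf{\gp,\Infty{i}}$ with $s$ a sum of squares (i.e.\ uniqueness of interval functions up to sums of squares, resting on the fact that positive semidefinite functions are sums of squares) to conclude $\gp\in\CL$; likewise, for comparability it first converts $\intf{\gu,\Infty{i}},\intf{\Infty{i},\gl}\in\ordering$ into $-\intf{\Infty{i},\gu},-\intf{\gl,\Infty{i}}\in\ordering$ by such identities and only then applies Proposition~\ref{density_strict} to two functions, reading $\Infty{i}\in(\gu,\gl)$ off the nonempty intersection $(\gl,\Infty{i})\cap(\Infty{i},\gu)$. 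You avoid the sum-of-squares manipulations entirely by always adjoining $-\csf{i}$ to the list fed to Proposition~\ref{density_strict} (which is exactly what pins the auxiliary point to $\Curve_i$ in the cases where the interval functions are positive at it) and by arguing each of the cut axioms by contradiction. This buys a more uniform and slightly more elementary argument, at the cost of one extra invocation of the density lemma, and it has the additional merit of verifying the disjointness of $\CL$ and $\CU$ explicitly, a point the paper leaves implicit (there it follows from the comparability argument read with $\gl=\gu$). Your arc bookkeeping in the comparability step is accurate.
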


\begin{proof}
Take any point $\gp\in \Curve_i \setminus \{\Infty{i}\}$. An interval function $\intf{\gp, \Infty{i}}$ is either positive or negative with respect to the ordering $\ordering$. In the first case clearly $\gp \in \CU$. In the second case $\intf{\gp, \Infty{i}}\in -\ordering$ and so $\csf{i}\intf{\gp,\Infty{i}}\in \ordering$. The interval function  $\intf{\gp,\Infty{i}}$ is positive on $\Curve \setminus \Curve_i$ and on the interval $(\Infty{i},\gp)$, while  $\csf{i}$ is positive on $\Curve \setminus \Curve_i$ and negative on $\Curve_i$. Therefore, $\csf{i}\intf{\gp,\Infty{i}}$ is negative only on the interval $(\Infty{i},\gp)$. Thus there exists a sum of squares $s$ such that $$\intf{\Infty{i},\gp}= s\cdot\csf{i}\cdot\intf{\gp,\Infty{i}} \in \ordering.$$  It follows that $\gp\in \CL$.

Now we show that every element of $\CL$ precedes every element of $\CU$. Take $\gl \in \CL$ and $\gu \in \CU$. We have $\intf{\gu,\Infty{i}} \in \ordering$ and $\intf{\Infty{i},\gl} \in \ordering$. Since $-\csf{i} \in \ordering$, we obtain that for some sums of squares $s_1$ and $s_2$, 
\begin{align*}
-\intf{\Infty{i},\gu} &= -\csf{i}\cdot s_1\cdot\intf{\gu,\Infty{i}} \in \ordering,
\intertext{and}
-\intf{\gl, \Infty{i}} &= -\csf{i}\cdot s_2\cdot\intf{\Infty{i},\gl} \in \ordering.
\end{align*}
It follows from Proposition~\ref{density_strict} that there is a point $\gp \in  \Curve$ such that $\intf{\gl,\Infty{i}}(\gp)< 0$ and $\intf{\Infty{i},\gu}(\gp) < 0$. This implies that $\gp \in (\gl, \Infty{i}) \cap (\Infty{i}, \gu)$ and consequently $\Infty{i} \in (\gu, \gl)$.
\end{proof}

Conversely, we show that to every cut of $\Curve$ one may associate an ordering on $\F$.

\begin{prop}\label{prop_cut->ordering}
Every cut $(\CL,\CU)$ of $\Curve_i$ defines an ordering $\ordering$ of the field $\F$ by the formula:
\begin{equation}\label{eq_cut->ordering}
\cto\bigl((\CL,\CU)\bigr) = \Bigl\{ f\in \F\st \exists_{\gl\in \CL \cup \{\Infty{i}\}}\exists_{\gu\in \CU \cup \{\Infty{i}\}}\forall_{\gp\in (\gl,\gu)}\ f(\gp)>0\Bigr\}.
\end{equation}
\end{prop}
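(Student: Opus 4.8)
The plan is to verify directly that $P:=\cto\bigl((\CL,\CU)\bigr)\subseteq\F$ satisfies the axioms of an ordering, namely $P+P\subseteq P$, $P\cdot P\subseteq P$, $P\cap(-P)=\emptyset$, and $\F\setminus\{0\}\subseteq P\cup(-P)$. Everything reduces to two elementary facts about the family
\[
\mathcal I:=\bigl\{(\gl,\gu)\st \gl\in\CL\cup\{\Infty{i}\},\ \gu\in\CU\cup\{\Infty{i}\}\bigr\}
\]
of intervals straddling the cut (with the convention that $(\Infty{i},\Infty{i})$ denotes $\Curve_i\setminus\{\Infty{i}\}$), where I would repeatedly use that $\bigl(\Curve_i\setminus\{\Infty{i}\},\prec_i\bigr)$ is a dense linear order with neither a first nor a last element, since $\Curve_i$ is homeomorphic to $\PP^1\kk$ with $\Infty{i}$ the deleted point. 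The first fact: every member of $\mathcal I$ is a nonempty interval of $\Curve_i$ not containing $\Infty{i}$ (nonemptiness using the no-endpoint property when an endpoint equals $\Infty{i}$), and $\mathcal I$ is downward directed --- given $(\gl_1,\gu_1),(\gl_2,\gu_2)\in\mathcal I$, let $\gl$ be whichever of $\gl_1,\gl_2$ is $\prec_i$-larger (reading $\Infty{i}$ as smaller than every point, so $\gl=\Infty{i}$ only if $\gl_1=\gl_2=\Infty{i}$) and $\gu$ whichever of $\gu_1,\gu_2$ is $\prec_i$-smaller (reading $\Infty{i}$ as larger than every point); then $(\gl,\gu)\in\mathcal I$ and $(\gl,\gu)=(\gl_1,\gu_1)\cap(\gl_2,\gu_2)$, which needs only that $\CL$ is $\prec_i$-downward and $\CU$ is $\prec_i$-upward closed and that $\gl\prec_i\gu$ whenever $\gl\in\CL$, $\gu\in\CU$. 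The second fact: for every \emph{finite} $F\subseteq\Curve_i$ there is an $I\in\mathcal I$ with $I\cap F=\emptyset$, obtained by pushing the left endpoint $\prec_i$-past every point of $F\cap\CL$ and the right endpoint $\prec_i$-below every point of $F\cap\CU$. I expect this second fact to be the delicate step: in the degenerate cases $\CL=\emptyset$ or $\CU=\emptyset$ one endpoint is forced to be $\Infty{i}$, with no genuine point endpoint available on that side, and it is precisely there that one must use the absence of a first (resp.\ last) element of $\Curve_i\setminus\{\Infty{i}\}$ to keep the chosen interval nonempty after the forbidden points are removed.

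Granting these, the first three axioms follow by taking common refinements. If $f,g\in P$, say $f>0$ on $I_1\in\mathcal I$ and $g>0$ on $I_2\in\mathcal I$ (so $f$ and $g$ are pole-free on $I_1$, $I_2$), the first fact yields $I_3\in\mathcal I$ with $I_3\subseteq I_1\cap I_2$; on $I_3$ both functions are regular and positive, hence so are $f+g$ and $fg$, so $f+g,fg\in P$. If some $f$ lay in $P\cap(-P)$, it would be simultaneously positive and negative on a \emph{nonempty} refinement $I_3\subseteq I_1\cap I_2$ --- impossible; hence $P\cap(-P)=\emptyset$. And $1\in P$, being positive on any member of $\mathcal I$ (a nonempty collection), so $-1\notin P$ and $P$ is a proper cone.

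It remains to prove totality. Fix a nonzero $f\in\F$ and let $Z\subseteq\Curve_i$ be the set of its zeros and poles on $\Curve_i$; this set is finite, by Proposition~\ref{prop_piecewise_monotonic} (or because a nonzero element of $\F$ has a finite divisor). By the second fact, choose $I\in\mathcal I$ with $I\cap Z=\emptyset$; then $f$ is continuous and nowhere zero on $I$. Since $I$ is an interval of $\Curve_i$, it is semi-algebraically connected, so the relatively open sets $\{f>0\}\cap I$ and $\{f<0\}\cap I$ --- disjoint, with union $I$ --- cannot both be nonempty. Therefore $f>0$ on $I$, giving $f\in P$, or $f<0$ on $I$, giving $-f\in P$. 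This completes the list of axioms, so $\cto\bigl((\CL,\CU)\bigr)$ is an ordering of $\F$; one checks in passing that for a principal cut this recovers the corresponding principal ordering of~\eqref{eq_compatible_orderings}.
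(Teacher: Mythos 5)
Your proof is correct and takes essentially the same route as the paper's: totality is established exactly as there, by excising the finitely many zeros and poles of $f$ and taking the maximal such point of $\CL$ and the minimal such point of $\CU$ (or $\Infty{i}$ in the degenerate cases) as endpoints of a straddling interval on which the sign of $f$ is constant. The only difference is that you spell out the directedness and nonemptiness checks behind the three axioms the paper dismisses as ``straightforward,'' which is a harmless (and welcome) elaboration.
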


\begin{proof}
In order to prove that $\ordering$ is an ordering of $\F$ we need to prove four axioms. Three of them, namely: $\ordering + \ordering\subseteq \ordering$ and $\ordering\cdot \ordering\subseteq \ordering$ and $\ordering \cap -\ordering = \emptyset$ are straightforward. The remaining one is $\ordering\cup (-\ordering)= \dot \F$. Take a non-zero $f\in \F$. It has only finitely many zeros and poles on $\Curve$. Let $\Ze$  be the set of all of them and define $\gl$ to be the maximal element of $\Ze \cap \CL$ if this set is nonempty, and  $\gl = \Infty{i}$ otherwise. Similarly, define $\gu$ to  be the minimal  element of $\CU \cap \Ze$ if this set is nonempty, and $\gu = \Infty{i}$ otherwise.  If $\Ze \neq \emptyset$ then $\gl \neq \gu$. It is clear that the sign of $f$ is constant on $(\gl,\gu)$. Thus, either $f\in \ordering$ or $-f\in \ordering$. If $\Ze = \emptyset$, then  the sign of $f$ is constant on the whole component $\Curve_i$. In this case we can pick up any $\gl \in \CL$ and $\gu \in \CU$ to show that $f\in \ordering$ or $-f\in \ordering$.
\end{proof}

Denote by $\Cuts = \Cuts(\Curve)$ the set of cuts on $\Curve$. Our goal now is to prove that the functions 
\[
\otc: X(\F) \rightarrow  \Cuts(\Curve)
\qquad\text{and}\qquad
\cto: \Cuts(\Curve) \rightarrow X(\F)
\]
are inverses of each other. We first show that this holds for the principal cuts and orderings.

\begin{lem}\label{lem_prinicpal}
The function $\otc$  induces a bijection between the set $\Orderings_{\text{princ}}(\F)$ of principal orderings of $\F$ and the set $\Cuts_{\text{princ}}(\Curve)$ of principal cuts of $\Curve$. Its inverse is~$\cto$.
\end{lem}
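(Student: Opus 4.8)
The plan is to establish the bijection by a direct case analysis on the four types of principal cuts listed in Definition~\ref{df_principal_cut}, matching them with the principal orderings $\ordering^{\pm}_\gp$ from \eqref{eq_compatible_orderings}. First I would compute $\otc(\ordering^+_\gp)$ and $\otc(\ordering^-_\gp)$ for an arbitrary point $\gp \in \Curve_i$ with $\gp \neq \Infty{i}$ using the defining formula \eqref{eq_ordering->cut}. The key observation is that $\intf{\gp,\Infty{i}} \in \ordering^+_\gp$ precisely when $\intf{\gp,\Infty{i}}$ is positive on an initial segment of the orientation emanating from $\gp$, and by the sign pattern of the interval function (negative exactly on $(\gp,\Infty{i})$, using the convention that intervals are taken with respect to the fixed orientation) one reads off that $\otc(\ordering^+_\gp) = \upcut{\gp}$ while $\otc(\ordering^-_\gp) = \locut{\gp}$. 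The case $\gp = \Infty{i}$ is handled separately: here $\ordering^+_{\Infty{i}}$ corresponds to the cut with empty lower set, i.e.\ $\upcut{\Infty{i}}$, and $\ordering^-_{\Infty{i}}$ to $\locut{\Infty{i}}$, because no point $\gr \neq \Infty{i}$ of $\Curve_i$ can satisfy the defining condition of $\CL$ (resp.\ $\CU$) in \eqref{eq_ordering->cut} when the relevant one-sided neighbourhood of $\Infty{i}$ is on the wrong side.

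Next I would check the reverse composition, computing $\cto(\upcut{\gp})$, $\cto(\locut{\gp})$, $\cto(\upcut{\Infty{i}})$, $\cto(\locut{\Infty{i}})$ via \eqref{eq_cut->ordering} and verifying each equals the expected principal ordering. For $\cto(\upcut{\gp})$, the lower cut set is $\lo\{\gp\}$ and the upper cut set is $(\gp,\Infty{i})$, so a function $f$ lies in $\cto(\upcut{\gp})$ iff $f > 0$ on some interval $(\gl,\gu)$ with $\gl \preceq_i \gp$ and $\gu \succ_i \gp$; shrinking such an interval towards $\gp$ from the right shows this is exactly the condition defining $\ordering^+_\gp$. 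Symmetrically $\cto(\locut{\gp}) = \ordering^-_\gp$, and the two infinite cases follow because taking $\gl$ or $\gu$ equal to $\Infty{i}$ in \eqref{eq_cut->ordering} reduces the condition to positivity on a one-sided neighbourhood of $\Infty{i}$. Combining the two computations gives $\cto \circ \otc = \id$ and $\otc \circ \cto = \id$ on the principal sets, and since $\Orderings_{princ}(\F)$ is by definition the set of all $\ordering^{\pm}_\gp$ and $\Cuts_{princ}(\Curve)$ is by definition the set of all principal cuts, this is exactly the claimed bijection with inverse $\cto$.

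The main obstacle I anticipate is bookkeeping with orientations and the precise sign conventions: one must be careful that the interval $(\gp,\Infty{i})$ used in the interval function $\intf{\gp,\Infty{i}}$ is the same orientation-respecting interval that appears when describing $\upcut{\gp}$ and $\locut{\gp}$ explicitly, and that the existential quantifiers "$\exists_{\gq \in \Curve_i}$" in \eqref{eq_compatible_orderings} pick out one-sided neighbourhoods on the correct side. A related subtlety is the degenerate case where $\Curve_i$ meets the zero/pole set of the relevant interval function only at $\gp$ and $\Infty{i}$ themselves, so that the "interval between consecutive zeros" on which $f$ has constant sign is forced to be $(\gp,\Infty{i})$ or $(\Infty{i},\gp)$; this is precisely what makes the argument work but needs to be stated carefully. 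Apart from this, every step is a routine unwinding of the definitions once the sign pattern of $\intf{\gp,\Infty{i}}$ established in the proof of Proposition~\ref{prop_ordering->cut} is invoked.
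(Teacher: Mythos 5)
Your proposal is correct and follows essentially the same route as the paper's proof: compute $\otc(\ordering^{\pm}_\gp)$ by unwinding \eqref{eq_ordering->cut} with the sign pattern of the interval functions to get $\upcut{\gp}$ and $\locut{\gp}$ (treating $\gp=\Infty{i}$ separately), and then verify $\cto$ of each principal cut recovers the corresponding principal ordering via \eqref{eq_cut->ordering}, giving both compositions equal to the identity on the principal sets. The only point to keep explicit, which the paper handles by the equivalence of the two existential conditions, is that a witness interval $(\gp,\gq)$ for $\ordering^+_\gp$ that wraps past $\Infty{i}$ can be replaced by $(\gp,\Infty{i})$, so the quantifier ranges in \eqref{eq_cut->ordering} suffice.
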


\begin{proof}
 Take a principal ordering $\ordering^+_\gp$ associated with some $\gp\in \Curve_i$. Compute the corresponding cut $\otc(\ordering^+_\gp) =  (\CL, \CU)$. Proposition~\ref{prop_ordering->cut} asserts that
\begin{align*}
\CU 
&= \bigl\{ \gu\in \Curve_i\setminus \{\Infty{i}\}\st \intf{\gu, \Infty{i}}\in \ordering^+_\gp \bigr\}\\
&= \bigl\{ \gu\in \Curve_i\setminus \{\Infty{i}\}\st \exists_{\gq\in \Curve_i} \forall_{\gr\in (\gp, \gq)} \intf{\gu, \Infty{i}}(\gr) > 0 \bigr\}\\
&= \bigl\{ \gu\in \Curve_i\setminus \{\Infty{i}\}\st \exists_{\gq\in \Curve_i} (\gp, \gq) \subseteq (\Infty{i}, \gu) \bigr\}\\
&= \left\{\begin{array}{ll}
(\gp, \Infty{i}) &\text{ if } \gp \neq \Infty{i},\\
\Curve_i\setminus \{\Infty{i}\}& \text{ if } \gp = \Infty{i}.
\end{array}\right.
\end{align*}
Consequently $\CL = (\Infty{i}, \gp]$ and so $\otc(\ordering^+_\gp) = \upcut{\gp}$. Analogously one shows that $\otc(\ordering^-_\gp) = \locut{\gp}$.

Conversely, take a principal cut $\upcut{\gp} = (\CL, \CU)$ and compute the associated ordering $\ordering = \cto(\upcut{\gp})$. We have that $\CL \cup \{\Infty{i}\} = [\Infty{i},\gp]$ and $\CU \cup \{\Infty{i}\} = (\gp,\Infty{i}]$, hence by the definition of $\cto$,
\[
\ordering = 
\bigl\{ f\in \F\st \exists_{\gl\in [\Infty{i},\gp]} \exists_{\gu\in (\gp, \Infty{i}]} \forall_{\gq\in (\gl, \gu)}\ f(\gq) > 0 \bigr\}.
\]
Observe that the condition
$
\exists_{\gl\in [\Infty{i},\gp]} \exists_{\gu\in (\gp, \Infty{i}]} \forall_{\gq\in (\gl, \gu)}\ f(\gq) > 0 
$
is satisfied if and only if the condition 
$
\exists_{\gu\in \Curve_i} \forall_{\gr\in (\gp, \gu)}\ f(\gr) > 0 
$ 
is satisfied. Hence $\ordering= \ordering^+_\gp$, as desired. Again, the case of $\locut{\gp}$ is  analogous.
\end{proof}

In order to extend this result to all cuts and orderings we will need some technical preparation.

\begin{lem}\label{lem_1}
Let $\ordering$ be an ordering of $\F$ with the associated component $\Curve_i$. Take $\gp \neq \gq$ in $\Curve_i\setminus \{\Infty{i}\}$ and assume that $\sgn_\ordering(\intf{\gp, \Infty{i}}) = \sgn_\ordering(\intf{\Infty{i}, \gq})$. Then 
\[
\sgn_\ordering(\intf{\gp, \gq})=1 \iff \Infty{i} \in (\gp,\gq).
\]
\end{lem}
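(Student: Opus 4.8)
The plan is to analyze the sign behavior of the three interval functions $\intf{\gp,\Infty{i}}$, $\intf{\Infty{i},\gq}$, and $\intf{\gp,\gq}$ by relating them via multiplication by the component separating function $\csf{i}$ and sums of squares, exactly as in the proof of Proposition~\ref{prop_ordering->cut}. The key observation is that, modulo sums of squares, the sign of a product of interval functions is governed by the parity of how many of the constituent "bad" intervals a test point lies in. So first I would split into the two cases dictated by the hypothesis $\sgn_\ordering(\intf{\gp,\Infty{i}}) = \sgn_\ordering(\intf{\Infty{i},\gq})$: either both interval functions are positive with respect to $\ordering$, or both are negative.

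In the first case, $\intf{\gp,\Infty{i}}, \intf{\Infty{i},\gq}\in\ordering$. I would then use the identities of the form $\intf{\Infty{i},\gp} \equiv \csf{i}\cdot\intf{\gp,\Infty{i}}$ and $\intf{\gq,\Infty{i}}\equiv\csf{i}\cdot\intf{\Infty{i},\gq}$ modulo sums of squares, established in the proof of Proposition~\ref{prop_ordering->cut}, to deduce the signs of all four interval functions between $\gp,\gq$ and $\Infty{i}$. Combining these, the product $\intf{\Infty{i},\gp}\cdot\intf{\Infty{i},\gq}$ (or the appropriate pairing) will, after dividing by the relevant sum-of-squares factor, be comparable to $\intf{\gp,\gq}$ up to a power of $\csf{i}$ and a sum of squares; here one must carefully track whether $\Infty{i}$ lies between $\gp$ and $\gq$, because that determines whether $\intf{\gp,\gq}$ is supported on an interval containing $\Infty{i}$ or not, i.e.\ whether the extra $\csf{i}$-factor appears. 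Then I would invoke Proposition~\ref{density_strict} applied to the resulting collection of functions in $\ordering$ to produce an actual point $\gp_0\in\Curve$ where all of them are simultaneously positive, and read off the geometric constraint on $\gp_0$: being positive for $\intf{\gp,\Infty{i}}$ forces $\gp_0\notin[\gp,\Infty{i}]$, being positive for $\intf{\Infty{i},\gq}$ forces $\gp_0\notin[\Infty{i},\gq]$, and the sign of $\intf{\gp,\gq}$ at $\gp_0$ then pins down whether $\Infty{i}\in(\gp,\gq)$. The second case ($\intf{\gp,\Infty{i}},\intf{\Infty{i},\gq}\in-\ordering$) is handled symmetrically, multiplying through by $-\csf{i}$ as in the density argument already used.

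The main obstacle will be the bookkeeping of the $\csf{i}$-factors and the orientation: one needs to be precise about which of $(\gp,\gq)$ and $(\gq,\gp)$ is the "short" interval (the one on which $\intf{\gp,\gq}$ is negative) and how the orientation-induced order $\prec_i$ on $\Curve_i\setminus\{\Infty{i}\}$ interacts with the literal interval $(\gp,\gq)$ on the circle $\Curve_i$. Concretely, $\Infty{i}\in(\gp,\gq)$ is equivalent to $\gq\prec_i\gp$ in the notation introduced before Definition~\ref{df_principal_cut}, and the whole point of the lemma is to show this equivalence can be detected purely from $\ordering$. I expect the cleanest route is: derive that $-\intf{\Infty{i},\gq}\in\ordering$ and $-\intf{\gp,\Infty{i}}\in\ordering$ in the first case (using $-\csf{i}\in\ordering$), so that a density point $\gp_0$ satisfies $\gp_0\in(\Infty{i},\gq)$ and $\gp_0\in(\gp,\Infty{i})$ simultaneously; the existence of such a common point already forces a definite cyclic arrangement of $\gp,\gq,\Infty{i}$, and then checking $\sgn\intf{\gp,\gq}(\gp_0)$ against whether $\gp_0$ lies in the short interval $(\gp,\gq)$ yields the claimed biconditional. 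I would keep the sum-of-squares factors anonymous throughout (as the paper does) and only ever use that they lie in every ordering.
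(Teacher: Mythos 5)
Your second, ``cleanest'' route is the one you actually commit to, and as written it has two problems. First, the sign bookkeeping is inconsistent: in the case $\intf{\gp, \Infty{i}}, \intf{\Infty{i}, \gq}\in\ordering$ you propose to derive $-\intf{\gp, \Infty{i}}\in\ordering$ and $-\intf{\Infty{i}, \gq}\in\ordering$, i.e.\ that $\ordering$ contains both a function and its negative. What multiplying by $-\csf{i}\in\ordering$ and using the identities from the proof of Proposition~\ref{prop_ordering->cut} really gives is $-\intf{\Infty{i}, \gp}\in\ordering$ and $-\intf{\gq, \Infty{i}}\in\ordering$ (the \emph{reversed} intervals), so the point supplied by Proposition~\ref{density_strict} lies in $(\Infty{i},\gp)\cap(\gq,\Infty{i})$, not in $(\gp,\Infty{i})\cap(\Infty{i},\gq)$; these two intersections are nonempty under opposite cyclic arrangements of $\gp,\gq,\Infty{i}$, so your version would read off the wrong arrangement. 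Second, and more seriously, the closing step is a non sequitur: the sign of $\intf{\gp,\gq}$ \emph{at} the density point $\gp_0$ is automatic from whether $\gp_0\in(\gp,\gq)$, but it says nothing about $\sgn_\ordering(\intf{\gp,\gq})$, because $\ordering$ is not (and in the intended application never is) a principal ordering attached to $\gp_0$. To close the argument along your lines you must argue by contradiction: assume the wrong sign of $\intf{\gp,\gq}$ in $\ordering$, put it into the Harrison intersection together with the two derived functions, and check that the resulting intersection of three arcs is empty---for instance, if $\Infty{i}\in(\gp,\gq)$ and $-\intf{\gp,\gq}\in\ordering$, the density point would have to lie in $(\Infty{i},\gp)\cap(\gq,\Infty{i})\cap(\gp,\gq)=(\gq,\gp)\cap(\gp,\gq)=\emptyset$. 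With that repair (plus the analogous check showing that when $\Infty{i}\in(\gq,\gp)$ the common sign must be $-1$, after which $\intf{\gp,\gq}\in\ordering$ is likewise impossible), your density route does prove the lemma.

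For comparison, the paper's proof is shorter and needs neither the case split on the common sign nor Proposition~\ref{density_strict}: it multiplies all three interval functions, $\kappa=\intf{\gp, \Infty{i}}\cdot\intf{\Infty{i},\gq}\cdot\intf{\gp,\gq}$, and observes that $\kappa$ (when $\Infty{i}\in(\gp,\gq)$), respectively $\csf{i}\kappa$ (when $\Infty{i}\in(\gq,\gp)$), is nonnegative on all of $\Curve$, hence a sum of squares in $\F$. The hypothesis enters only through $\intf{\gp,\Infty{i}}\cdot\intf{\Infty{i},\gq}\in\ordering$, which holds whichever the common sign is; one then gets $\intf{\gp,\gq}\in\ordering$ in the first case, and $\csf{i}\intf{\gp,\gq}\in\ordering$, hence $\sgn_\ordering(\intf{\gp,\gq})=-1$, in the second. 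Your first sketched route (pairing interval functions up to $\csf{i}$-factors and sums of squares) gestures at exactly this, but it never isolates the key point that the \emph{triple} product is positive semidefinite on the whole curve, which is what makes the bookkeeping you worry about disappear.
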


\begin{proof}
We have either $\Infty{i} \in (\gp,\gq)$  which is equivalent to $\gq \prec \gp$, or $\Infty{i} \in (\gq,\gp)$ which is equivalent to $\gp \prec \gq$.

Assume that $\Infty{i} \in (\gp,\gq)$. Consider the function 
\[
\kappa = \intf{\gp, \Infty{i}}\cdot \intf{\Infty{i},\gq}\cdot \intf{\gp,\gq}.
\]
This function takes only non-negative values on $\Curve$, so $\kappa$ is a sum of squares in~$\F$. Since $\intf{\gp, \Infty{i}}\cdot \intf{\Infty{i},\gq}$ is in $\ordering$ by assumption, also $\intf{\gp,\gq}$ is in $\ordering$. Thus $\sgn_\ordering(\intf{\gp, \gq})=1$.

On the other hand, if  $\Infty{i} \in (\gq,\gp)$, consider the function $\csf{i}\kappa$. Also this function takes non-negative values on $\Curve$, hence again is a sum of squares in~$\F$.  Therefore,  $\csf{i}\intf{\gp,\gq}$ is in $\ordering$. Since $- \csf{i}$ is in $\ordering$ (as $\Curve_{i}$ is associated with $\ordering$ by assumption), we obtain that $\sgn_\ordering(\intf{\gp, \gq})=-1$.
\end{proof}

\begin{prop}\label{prop_otc_is_injective}
For every ordering  $\ordering$  of $\F$, 
\[
(\cto \circ \otc) (\ordering)=\ordering.
\]
\end{prop}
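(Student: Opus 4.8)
The plan is to show that the ordering $\cto(\otc(\ordering))$ contains $\ordering$; since both are orderings of $\F$ (hence maximal proper preorderings, or equivalently $\dot\F$ is partitioned into $\ordering \dcup -\ordering$), one containment forces equality. So fix a nonzero $f \in \ordering$ with associated component $\Curve_i$ and let $(\CL,\CU) = \otc(\ordering)$; I must produce $\gl \in \CL \cup \{\Infty{i}\}$ and $\gu \in \CU \cup \{\Infty{i}\}$ such that $f > 0$ on the whole interval $(\gl, \gu)$.

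First I would reduce to controlling the sign changes of $f$ on $\Curve_i$. Let $\Ze = \{\gr_1, \dots, \gr_k\} \subseteq \Curve_i$ be the (finite, by Theorem~\ref{thm_even_zeros}) set of points where $f$ changes sign, together with $\Infty{i}$ itself if convenient. Between consecutive points of $\Ze \cup \{\Infty{i}\}$ the sign of $f$ with respect to the ordering is constant (this is exactly the argument already used in the proof of Proposition~\ref{prop_cut->ordering}: an interval function of such a subinterval times $f$ or $-f$ is, up to a sum of squares and possibly $\csf{i}$, forced into $\ordering$). I would then single out, among the points of $\Ze$, the largest one $\gl_0$ lying in $\CL$ (set $\gl_0 = \Infty{i}$ if there is none) and the smallest one $\gu_0$ lying in $\CU$ (set $\gu_0 = \Infty{i}$ if there is none). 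The key claim is that $f > 0$ on $(\gl_0, \gu_0)$ — equivalently, that $f$ does not change sign strictly between $\gl_0$ and $\gu_0$, and that on that interval the constant sign is $+$.

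The no-sign-change part is essentially the definition of $\gl_0, \gu_0$: any point of $\Ze$ strictly between them would lie neither in $\CL$ (it would be $\succ \gl_0$, contradicting maximality) nor in $\CU$ (it would be $\prec \gu_0$), and it is not $\Infty{i}$ either since $\gl_0 \prec \Infty{i} \prec \gu_0$ is impossible unless one of them equals $\Infty{i}$ — here I need to be careful about the cyclic structure of $\Curve_i$ and use that $\Infty{i}$ is by construction not in $\CL \cup \CU$; the ordering $\prec_i$ is a genuine linear order on $\Curve_i \setminus \{\Infty{i}\}$ and $\CL$ sits entirely below $\CU$, so "strictly between $\gl_0$ and $\gu_0$ in $\Curve_i$" makes sense and such a point would have to be classified by $\otc(\ordering)$ into $\CL$ or $\CU$, a contradiction. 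For the sign being $+$: on the interval immediately to the right of $\gl_0$ and immediately to the left of $\gu_0$, the sign of $f$ agrees with $\sgn_\ordering f = 1$ because $f \in \ordering$; combining with Lemma~\ref{lem_1} (applied to relate the interval functions $\intf{\gl_0, \Infty{i}}$, $\intf{\Infty{i}, \gu_0}$ and the membership conditions defining $\CL, \CU$) nails down that this positive sign persists across $(\gl_0, \gu_0)$.

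The main obstacle I anticipate is the bookkeeping around $\Infty{i}$ and the cyclic-versus-linear ambiguity: the cases where $\gl_0 = \Infty{i}$ or $\gu_0 = \Infty{i}$ (i.e. $\Ze$ meets only one side of the cut, or $\Ze = \emptyset$) need separate, though easy, treatment, and one must verify that the definition \eqref{eq_cut->ordering} of $\cto$ — which explicitly allows $\gl$ or $\gu$ to be $\Infty{i}$ — actually accommodates them. A secondary subtlety is passing cleanly between "$f(\gr) > 0$ for all $\gr$ in an interval" and "$\intf{\cdot,\cdot} \in \ordering$" statements; I would handle this uniformly by the sum-of-squares trick from Proposition~\ref{prop_ordering->cut} (multiply by the appropriate interval function and, if the target interval lies on the far side of $\Infty{i}$, by $\csf{i}$, to land in $\ordering$), which is why Lemma~\ref{lem_1} was isolated beforehand. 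Once the claim $f > 0$ on $(\gl_0, \gu_0)$ is established, $f \in \cto(\otc(\ordering))$ is immediate from the definition, and the proof closes by the maximality argument.
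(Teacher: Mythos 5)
Your skeleton is the same as the paper's (it suffices to prove $\ordering\subseteq\cto(\otc(\ordering))$; choose the extreme exceptional points of $f$ in $\CL$ and in $\CU$; observe that $f$ has constant sign on the interval between them; conclude by showing this sign is positive), but the decisive step --- why that constant sign is $+1$ --- is missing. You assert that on the subinterval adjacent to the cut ``the sign of $f$ agrees with $\sgn_\ordering f=1$ because $f\in\ordering$'', yet this is exactly what the proposition claims: for a non-principal ordering there is no a priori link between the $\ordering$-sign of an arbitrary $f$ and its pointwise sign near the cut $\otc(\ordering)$. Such a link is immediate (via sums of squares) only for interval functions and the $\csf{i}$, and that is all Lemma~\ref{lem_1} controls; invoking it to ``nail down'' the sign of a general $f$ is circular. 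The paper closes precisely this gap with a density argument that your proposal never uses: from $\gp\in\CL$, $\gq\in\CU$ one gets $\intf{\Infty{i},\gp},\intf{\gq,\Infty{i}}\in\ordering$, then Lemma~\ref{lem_1} together with $-\csf{i}\in\ordering$ gives $-\intf{\gp,\gq}\in\ordering$, so $H(f)\cap H(-\intf{\gp,\gq})\neq\emptyset$, and Proposition~\ref{density_strict} (i.e.\ density of principal orderings, Theorem~\ref{thm_density}) produces a point $\gp_0\in(\gp,\gq)$ with $f(\gp_0)>0$; constancy of the sign then forces $f>0$ on all of $(\gp,\gq)$. Some substitute for this step (for instance, matching the full sign pattern of $f$ by a product of interval and component separating functions via Theorem~\ref{thm_function_from_points} and comparing $\ordering$-signs through a globally non-negative, hence sum-of-squares, product) is indispensable; without it the proof does not go through.

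A secondary, fixable, problem: taking $\Ze$ to consist only of the points where $f$ \emph{changes sign} is not enough. An even-order zero, or a pole at which $f$ does not change sign, may lie strictly between $\gl_0$ and $\gu_0$; then $f$ is not strictly positive (indeed not even defined, in the case of a pole) on the whole interval, so the membership condition in \eqref{eq_cut->ordering} fails as you stated it. The paper avoids this by first replacing $f$ by $\frac{f}{1+f^2}$, which has the same $\ordering$-sign and lies in the real holomorphy ring, hence is pole-free on $\Curve$, and then cutting at \emph{all} zeros of $f$ on $\Curve_i$, not merely the sign changes; your bookkeeping around $\Infty{i}$ and the cases $\gl_0=\Infty{i}$ or $\gu_0=\Infty{i}$ is fine and would survive this repair unchanged.
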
 

\begin{proof}
Let $\Curve_i$ be  the associated component for $\ordering$, $(\CL,\CU) = \otc(\ordering)$, and $\ordering' = \cto\bigl((\CL,\CU)\bigr)$. The proposition is proved  for principal  orderings, therefore we may assume that $\ordering \notin \{ \ordering^-_{\Infty{i}},\ordering^+_{\Infty{i}}\}$, hence  both  $\CL$ and $\CU$ are non-empty. 

Since $\ordering$ and $\ordering'$ are two orderings, it suffices to show that $\ordering\subseteq \ordering'$. Take $f \in \ordering$. We have that $f \in \ordering$ iff $\frac{f}{1+f^2}  \in \ordering$, and it is well known that the function $\frac{f}{1+f^2}$ is an element of the real holomorphy ring of $K$ (see \cite[Lemma~9.5]{Lam83}). Therefore, without loss of generality,  we can assume that $f$ is in the real holomorphy ring of~$\F$, so $f$ has no poles on $\Curve$. 

Let $\Ze_i(f)$ be the set of zeros of $f$ on $\Curve_i$. Let $\gp$ be the maximal element of $\CL \cap \Ze_i(f)$ if this set is non-empty, and $\gp \in \CL$ arbitrary otherwise. Let $\gq$  be the minimal  element of $\CU \cap \Ze_i(f)$ if this set is non-empty, and $\gq \in \CU$ arbitrary otherwise. Then the function $f$ has a constant sign on the interval $(\gp,\gq)$. By the definition of the cut $(\CL,\CU) = \otc(\ordering)$  we have $\intf{\gq, \Infty{i}} \in \ordering$ and $\intf{\Infty{i}, \gp}\in \ordering$.  Since $\Infty{i} \in (\gq,\gp)$, Lemma~\ref{lem_1} asserts that $\intf{\gq, \gp} \in \ordering$. Since $-\csf{i} \in \ordering$ we obtain for some sum of squares $s$ that 
\[
-\intf{\gp, \gq}= -\csf{i}\cdot s\cdot\intf{\gq,\gp} \in \ordering.
\]
Therefore $\ordering \in H(f) \cap H(-\intf{\gp, \gq})$. By Proposition~\ref{density_strict} we obtain a point $\gp_0 \in \Curve$ such that $f(\gp_0)> 0 $ and $\intf{\gp, \gq}(\gp_0)< 0$, hence $\gp_0 \in [\gp, \gq]$ and so $f$ is positive on  $(\gp,\gq)$ and it follows that $f \in \ordering'$, by the definition of $\ordering' = \cto\bigl((\CL,\CU)\bigr)$.
\end{proof}

Any two cuts $C_1 = (\CL_1, \CU_1)$ and $C_2 = (\CL_2, \CU_2)$ of the same component $\Curve_i$ can be compared by inclusion of their lower cut sets: we say that $C_1\le C_2$ if $\CL_1\subseteq\CL_2$. This way the set of cuts on $\Curve_i$ is totally ordered with the smallest element $\upcut{\Infty{i}} = \bigl( \emptyset, \Curve_i\setminus \{\Infty{i}\}\bigr)$ and the largest element $\locut{\Infty{i}} = \bigl( \Curve_i\setminus \{\Infty{i}\}, \emptyset\bigr)$. This relation induces an order topology on $\Curve_i$ and further a union topology on $\Curve$. 

\begin{lem}\label{lem_2}
Let $(\gp,\gq)$ be an interval on $\Curve_i$ such that $\Infty{i} \notin (\gp,\gq)$. Let $\ordering$ be an ordering of $\F$ with $\sgn_\ordering(\intf{\gp, \gq})=-1$. Then
\[
\upcut{\gp} \preceq \otc(\ordering)\preceq \locut{\gq}.
\]
\end{lem}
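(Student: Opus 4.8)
The statement compares the cut $\otc(\ordering)$ with the two principal cuts $\upcut{\gp}$ and $\locut{\gq}$ in the total order on cuts of $\Curve_i$. Since $C_1 \le C_2$ means $\CL_1 \subseteq \CL_2$, writing $\otc(\ordering) = (\CL, \CU)$ I need to show two inclusions: $\CL(\upcut{\gp}) \subseteq \CL$ and $\CL \subseteq \CL(\locut{\gq})$. By the explicit description preceding Definition~\ref{df_principal_cut}, $\CL(\upcut{\gp}) = (\Infty{i}, \gp]$ and $\CL(\locut{\gq}) = (\Infty{i}, \gq)$ (assuming $\gp, \gq \neq \Infty{i}$; the degenerate cases where an endpoint equals $\Infty{i}$ should be checked separately but are immediate since then one of the bounds is automatic). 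So concretely the claim is $(\Infty{i}, \gp] \subseteq \CL \subseteq (\Infty{i}, \gq)$, i.e.\ every point strictly between $\Infty{i}$ and $\gp$ (inclusive of $\gp$) lies in the lower cut set of $\otc(\ordering)$, and no point of $[\gq, \Infty{i})$ does.

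The first step is to unwind what membership in $\CL$ means: by Proposition~\ref{prop_ordering->cut}, a point $\gr \in \Curve_i \setminus \{\Infty{i}\}$ lies in $\CL$ iff $\intf{\Infty{i}, \gr} \in \ordering$, and it lies in $\CU$ iff $\intf{\gr, \Infty{i}} \in \ordering$; exactly one of these holds. So I want: for $\gr \in (\Infty{i}, \gp]$, $\sgn_\ordering(\intf{\Infty{i}, \gr}) = 1$; and for $\gr \in [\gq, \Infty{i})$, $\sgn_\ordering(\intf{\gr, \Infty{i}}) = 1$. The hypothesis available is $\sgn_\ordering(\intf{\gp, \gq}) = -1$ together with $\Infty{i} \notin (\gp, \gq)$ — equivalently $\gp \prec_i \gq$, so the points are ordered $\Infty{i} \prec_i \dots \prec_i \gp \prec_i \dots \prec_i \gq \prec_i \dots$ (cyclically back to $\Infty{i}$), and the interval $(\gp, \gq)$ sits between them not containing $\Infty{i}$.

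The key technical tool is to relate interval functions of "nested" or "adjacent" intervals via products being sums of squares, exactly in the style of Lemma~\ref{lem_1} and the proof of Proposition~\ref{prop_ordering->cut}. For the upper bound: take $\gr \in [\gq, \Infty{i})$. I want to show $\intf{\gr, \Infty{i}} \in \ordering$. The interval $(\gr, \Infty{i})$ contains $(\gq, \Infty{i})$, and since $(\gp,\gq)$ and $(\gq,\Infty{i})$ are "complementary" around the splitting point structure, the product $\intf{\gp,\gq}\cdot \intf{\gp, \Infty{i}}\cdot \intf{\gq, \Infty{i}}$ — or an appropriate such triple, possibly with a factor $\csf{i}$ — takes only nonnegative values on $\Curve$ and is therefore a sum of squares in $\F$ by \cite[Theorem~4.1]{Kne76a}. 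Plugging in $\sgn_\ordering(\intf{\gp,\gq}) = -1$ and $-\csf{i} \in \ordering$ lets me deduce the sign of the remaining interval function, and then a further comparison (again via a sum-of-squares product) relates $\intf{\gq, \Infty{i}}$ to $\intf{\gr, \Infty{i}}$ for $\gr$ between $\gq$ and $\Infty{i}$, since the interval $[\gq, \gr] \cup [\gr, \Infty{i})$ structure makes $\intf{\gq,\Infty{i}}\cdot\intf{\gr,\Infty{i}}\cdot\intf{\gq,\gr}$ a sum of squares. Concluding requires knowing $\sgn_\ordering \intf{\gq,\gr}$, which follows from $\gr \in [\gq,\Infty{i})$ being on the "far side" — here I may invoke Proposition~\ref{density_strict} as in the proof of Proposition~\ref{prop_otc_is_injective} to avoid sign bookkeeping. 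The lower bound $\upcut{\gp} \preceq \otc(\ordering)$ is symmetric: for $\gr \in (\Infty{i}, \gp]$ I show $\intf{\Infty{i}, \gr} \in \ordering$ by the same sum-of-squares manipulations applied to the triple involving $\gp$, $\gr$, $\Infty{i}$ and using $\sgn_\ordering \intf{\gp,\gq} = -1$ propagated to $\intf{\Infty{i},\gp}$.

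\textbf{Main obstacle.} The delicate part is not any single algebraic identity but keeping the cyclic ordering and the orientation-dependent sign conventions of the interval functions straight: an interval function $\intf{\ga,\gb}$ is negative precisely on $(\ga,\gb)$, and which of two "complementary" arcs is meant depends on the orientation, so I must be careful that each product I claim is globally nonnegative really is — typically this needs inserting a factor of $\csf{i}$ exactly when the "bad" arc wraps through $\Curve \setminus \Curve_i$. I expect to handle this by systematically computing, for each candidate product, its sign on $\Curve \setminus \Curve_i$, on each of the relevant subarcs of $\Curve_i$, and at the finitely many endpoints, mirroring the bookkeeping already done in the proofs of Proposition~\ref{prop_ordering->cut} and Lemma~\ref{lem_1}, and then reading off membership in $\ordering$ from the facts that sums of squares lie in $\ordering$ and $-\csf{i} \in \ordering$.
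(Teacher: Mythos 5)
Your plan is workable, but it takes a genuinely different route from the paper. The paper's proof is a short contradiction argument that leans on Proposition~\ref{prop_otc_is_injective}: writing $\otc(\ordering)=(\CL,\CU)$, if $\otc(\ordering)\prec\upcut{\gp}$ then $\gp\in\CU$; since $\Infty{i}\notin(\gp,\gq)$, the function $\intf{\gp,\gq}$ is positive on all of $(\Infty{i},\gp)$, so the defining formula of $\cto$ applied to $\ordering=(\cto\circ\otc)(\ordering)$ yields $\intf{\gp,\gq}\in\ordering$, contradicting the hypothesis; the inequality $\otc(\ordering)\preceq\locut{\gq}$ is handled symmetrically. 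You instead argue directly from the definition of $\otc$, reading membership of a point $\gr$ in $\CL$ or $\CU$ off the signs of $\intf{\Infty{i},\gr}$ and $\intf{\gr,\Infty{i}}$, and propagating the hypothesis through products of interval functions that are nonnegative on $\Curve$ (hence sums of squares) together with Proposition~\ref{density_strict}. This does go through: the triples you name, e.g.\ $\intf{\gp,\gq}\cdot\intf{\gp,\Infty{i}}\cdot\intf{\gq,\Infty{i}}$ and $\intf{\gq,\Infty{i}}\cdot\intf{\gr,\Infty{i}}\cdot\intf{\gq,\gr}$, are indeed nonnegative on all of $\Curve$ with no $\csf{i}$ factor needed, and the leftover sign ambiguity is removed by Proposition~\ref{density_strict} as you suggest. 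Two simplifications would shrink your bookkeeping to almost nothing: lower cut sets are downward closed (immediate from the cut axiom), so it suffices to show $\gp\in\CL$ and $\gq\in\CU$; and each of these follows from a single application of Proposition~\ref{density_strict} alone --- for instance $\gp\in\CU$ would place $\ordering$ in $H(\intf{\gp,\Infty{i}})\cap H(-\intf{\gp,\gq})$, producing a point where $\intf{\gp,\Infty{i}}>0$ and $\intf{\gp,\gq}<0$, impossible because $(\gp,\gq)\subseteq(\gp,\Infty{i})$ --- so the sum-of-squares identities are in fact optional. What the paper's route buys is brevity, since Proposition~\ref{prop_otc_is_injective} has already done the heavy lifting; what yours buys is independence from that proposition, at the cost of the sign bookkeeping you flag. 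Finally, both arguments tacitly use that $\sgn_\ordering(\intf{\gp,\gq})=-1$ forces $\Curve_i$ to be the component associated with $\ordering$ (so that $\otc(\ordering)$ is a cut of $\Curve_i$ at all, comparable with $\upcut{\gp}$ and $\locut{\gq}$); your density argument actually yields this for free, whereas your appeal to $-\csf{i}\in\ordering$ presupposes it.
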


\begin{proof}
Write $\otc(\ordering)=(\CL,\CU)$. Proposition \ref{prop_otc_is_injective} asserts that $\ordering = (\cto\circ \otc)(\ordering)= \cto\bigl((\CL,\CU)\bigr)$. We show that $\upcut{\gp} \preceq \otc(\ordering)$. This is true if $\gp = \Infty{i}$. Assume that $\gp \neq \Infty{i}$ and $\otc(\ordering)\prec \upcut{\gp}$. That means that $\gp \in \CU$. Since $\Infty{i} \notin (\gp,\gq)$, we have that  $\intf{\gp, \gq}$ has positive values  on the interval $(\Infty{i},\gp)$. By the definition of $\ordering = \cto\bigl((\CL,\CU)\bigr)$, we obtain that $\intf{\gp, \gq} \in \ordering$, a contradiction. Analogously one shows that  $\otc(\ordering)\preceq \locut{\gq}.$
\end{proof}

Recall that for every real closed field $\kk$ there is a canonical homeomorphism between the space $\Orderings\bigl(\kk(x)\bigr)$ of orderings of the rational function field (equipped with the Harrison topology) and the space of  cuts of $\kk$. Our first main result generalizes this correspondence to algebraic function fields.

\begin{thm}\label{thm_Cuts_homeo_Orderings}
The space $\Cuts(\Curve)$ of cuts on $\Curve$ is homeomorphic to the space $\Orderings(K)$ of orderings of $K$. This homeomorphism can be chosen to  map principal cuts to principal orderings.
\end{thm}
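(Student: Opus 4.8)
\emph{Proof plan.} The strategy is to show that the maps $\otc\colon\Orderings(K)\to\Cuts(\Curve)$ of Proposition~\ref{prop_ordering->cut} and $\cto\colon\Cuts(\Curve)\to\Orderings(K)$ of Proposition~\ref{prop_cut->ordering} are mutually inverse bijections and that $\otc$ is continuous. Since the space $\Orderings(K)$ with the Harrison topology is compact (see \cite{Lam83}), while the space $\Cuts(\Curve)=\Cuts(\Curve_1)\dcup\dotsb\dcup\Cuts(\Curve_N)$, carrying the disjoint union of the order topologies, is Hausdorff, a continuous bijection between them is automatically a homeomorphism; and by Lemma~\ref{lem_prinicpal} its inverse~$\cto$ carries principal cuts to principal orderings, which gives the last assertion of the theorem. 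The equality $\cto\circ\otc=\id$ is precisely Proposition~\ref{prop_otc_is_injective}, so the remaining work is to prove $\otc\circ\cto=\id$ and the continuity of $\otc$.

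To prove $\otc\circ\cto=\id$ I would fix a cut $C=(\CL,\CU)$ of a component $\Curve_i$, set $\ordering:=\cto(C)$, and observe first that $-\csf{i}\in\ordering$ (since $-\csf{i}$ is positive on all of $\Curve_i$, hence on every interval appearing in~\eqref{eq_cut->ordering}), so that $\Curve_i$ is the component associated with $\ordering$. If $C$ is principal, then $\otc(\ordering)=\otc\bigl(\cto(C)\bigr)=C$ by Lemma~\ref{lem_prinicpal}. If $C$ is not principal, then $\CL$ and $\CU$ are non-empty, $\CL$ has no largest and $\CU$ no smallest element. For arbitrary $\gl\in\CL$ and $\gu\in\CU$ the cut condition gives $\Infty{i}\in(\gu,\gl)$, hence $\Infty{i}\notin(\gl,\gu)$ and $\intf{\gl,\gu}$ is negative on $(\gl,\gu)$; using $\gl$ and $\gu$ themselves as the bounds in~\eqref{eq_cut->ordering} shows $-\intf{\gl,\gu}\in\ordering$, that is, $\sgn_\ordering(\intf{\gl,\gu})=-1$. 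Lemma~\ref{lem_2} then yields $\upcut{\gl}\preceq\otc(\ordering)\preceq\locut{\gu}$ for all such $\gl,\gu$. Comparing lower cut sets, the left-hand inequality puts $\gl$ into the lower cut set of $\otc(\ordering)$ and the right-hand one keeps $\gu$ out of it; letting $\gl$ range over $\CL$ and $\gu$ over $\CU$, and using $\CL\dcup\CU=\Curve_i\setminus\{\Infty{i}\}$, one concludes that the lower cut set of $\otc(\ordering)$ is exactly $\CL$, i.e.\ $\otc(\ordering)=C$. I expect this non-principal case to be the main obstacle: the delicate point is to rule out that $\otc(\ordering)$ overshoots $C$, and the two ingredients that make it work are the order-theoretic squeezing provided by Lemma~\ref{lem_2} and the absence of extremal points in a non-principal cut.

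For the continuity of $\otc$ I would use that the sets $\{C\in\Cuts(\Curve_i):\gp\in\CL_C\}=\{C:\locut{\gp}\prec C\}$ and $\{C\in\Cuts(\Curve_i):\gp\in\CU_C\}=\{C:C\prec\upcut{\gp}\}$, for $i=1,\dotsc,N$ and $\gp\in\Curve_i\setminus\{\Infty{i}\}$, form a subbasis of $\Cuts(\Curve)$: every open ray of $\Cuts(\Curve_i)$ is a union of such sets — for instance $\{C:\upcut{\gp}\prec C\}$ is the union of the sets $\{C:\gr\in\CL_C\}$ over all $\gr$ with $\gp\prec_i\gr$, and analogously for the rays determined by non-principal cuts — and each $\Cuts(\Curve_i)$ is the union of two of them. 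By the definition of $\otc$ in~\eqref{eq_ordering->cut}, $\otc^{-1}\bigl(\{C\in\Cuts(\Curve_i):\gp\in\CL_C\}\bigr)=H(-\csf{i})\cap H(\intf{\Infty{i},\gp})$ and $\otc^{-1}\bigl(\{C\in\Cuts(\Curve_i):\gp\in\CU_C\}\bigr)=H(-\csf{i})\cap H(\intf{\gp,\Infty{i}})$, both open in the Harrison topology. Hence $\otc$ is continuous, and the compactness and Hausdorffness noted above finish the proof.
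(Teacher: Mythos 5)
Your proposal is correct, and its overall skeleton (mutually inverse maps $\otc$, $\cto$, continuity of $\otc$, compact-to-Hausdorff, Lemma~\ref{lem_prinicpal} for the principal cuts) is the same as the paper's; the two core steps, however, are executed differently. For $\otc\circ\cto=\id$ the paper computes the upper cut set $\CU'$ of $\otc\bigl(\cto(C)\bigr)$ directly from the definitions \eqref{eq_ordering->cut} and \eqref{eq_cut->ordering} and checks $\CU\subseteq\CU'$ and $\CL\cap\CU'=\emptyset$, whereas you squeeze $\otc(\ordering)$ between $\upcut{\gl}$ and $\locut{\gu}$ via Lemma~\ref{lem_2}; this is legitimate (no circularity, since Lemma~\ref{lem_2} rests only on Proposition~\ref{prop_otc_is_injective}, and your preliminary observation that $-\csf{i}\in\cto(C)$ guarantees the cuts being compared live on the same component), and it is slicker once Lemma~\ref{lem_2} is in hand, while the paper's computation is self-contained. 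For continuity the paper takes the usual order-topology subbasis of intervals $(C_1,C_2)$ and expresses each preimage as a union $\bigcup H(-\intf{\gp,\gq})$ (again via Lemma~\ref{lem_2}), whereas you take the coarser generating family of principal rays $\{C:\gp\in\CL_C\}$, $\{C:\gp\in\CU_C\}$, whose preimages are read off from \eqref{eq_ordering->cut} as the single Harrison intersections $H(-\csf{i})\cap H(\intf{\Infty{i},\gp})$ and $H(-\csf{i})\cap H(\intf{\gp,\Infty{i}})$; this avoids the union computation entirely, at the modest cost of verifying that these rays form a subbasis (your argument for that — every ray determined by an arbitrary cut is a union of principal rays, using that lower cut sets are downward closed — is sound). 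So the proposal stands as a valid, mildly streamlined variant of the paper's proof.
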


\begin{proof}
Proposition~\ref{prop_otc_is_injective} asserts that $\cto\circ \otc= \id_{\Orderings(\F)}$. We claim that also $\otc\circ \cto = \id_{\Cuts(\Curve)}$. Take any cut $C = (\CL, \CU)$ of a component $\Curve_i\subseteq \Curve$. Let $\ordering := \cto(C)$ and 
$C' = (\CL', \CU'):= \otc(\ordering)$. By the definitions of $\cto$ and $\otc$, 
\begin{align*}
\CU'
&= \bigl\{ \gq\in \Curve_i\setminus \{\Infty{i}\}\st \intf{\gq, \Infty{i}}\in \ordering \bigr\}\\
&= \bigl\{ \gq\in \Curve_i\setminus \{\Infty{i}\}\st 
\exists_{\gl\in \CL\cup \{\Infty{i}\}} \exists_{\gu\in \CU\cup \{\Infty{i}\}} \forall_{\gp\in (\gl, \gu)} \ \intf{\gq, \Infty{i}}(\gp) > 0 \bigr\}\\
&= \bigl\{ \gq\in \Curve_i\setminus \{\Infty{i}\}\st 
\exists_{\gl\in \CL\cup \{\Infty{i}\}} \exists_{\gu\in \CU\cup \{\Infty{i}\}}\ (\gl, \gu)\subseteq (\Infty{i}, \gq) \bigr\}.
\end{align*}
Take any $\gu\in \CU$ and $\gl\in \CL$. Since  $(\gl, \gu)\subset (\Infty{i}, \gu)$, we obtain that $\gu\in \CU'$, so $\CU\subseteq \CU'$. On the other hand, $\gl\notin \CU'$ because otherwise there would exist $\gu\in \CU\cup \{\Infty{i}\}$ satisfying $\gu\prec \gl$. But this contradicts the very definition of a cut. Consequently, $\CL$ is disjoint from $\CU'$ and it follows that $C = C'$. This proves our claim.

We shall now prove that  $\otc$ is continuous. Observe that the set
\[
\bigl\{ (C_1, C_2), [\upcut{\Infty{i}}, C_2), (C_1,\locut{\Infty{i}}]\st C_1, C_2\in \Cuts(\Curve_i)\bigr\}
\]
is a subbasis of the topological space $\Cuts(\Curve_i)$. Fix a subbasic set $(C_1, C_2) \subset \Cuts(\Curve_i)$, with $C_1 = (\CL_1, \CU_1)$ and $C_2 = (\CL_2, \CU_2)$. Take a cut $C = (\CL, \CU)$. Note that if $\gp \in  \CL$ and $\gq \in \CU$, then the function $-\intf{\gp,\gq}$ is positive on $(\gp,\gq)$, in particular $\otc^{-1}(C)$ belongs to the Harrison set $H(-\intf{\gp,\gq})$. If $C\in (C_1, C_2)$, then we can choose $\gp \in \CU_1 \cap \CL$ and $\gq \in \CU \cap \CL_2$. That shows 
\[
\otc^{-1}\bigl((C_1, C_2)\big) \subseteq 
\bigcup_{\gp,\gq \in  \CU_1 \cap \CL_2, \gp\prec \gq} H(-\intf{\gp,\gq}).
\]
Conversely, take any two distinct points $\gp,\gq \in  \CU_1 \cap \CL_2$, say $\gp\prec \gq$. Further let $\ordering \in  H(-\intf{\gp,\gq})$  be an ordering. We have  $\sgn_\ordering(\intf{\gp, \gq})=-1$, thus Lemma~\ref{lem_2} and the choice of $\gp$ and $\gq$ yield  
\[
C_1\prec \upcut{\gp} \preceq \otc(\beta) \preceq \locut{\gq} \prec C_2.
\]
We have, thus, shown that 
\[
\otc^{-1}\bigl((C_1, C_2)\big) = 
\bigcup_{(\gp,\gq) \subset  \CU_1 \cap \CL_2} H(-\intf{\gp,\gq}).
\]
In a similar way one shows that 
\[
\otc^{-1}\bigl([\Infty{i}, C_2)\big) = 
\bigcup_{\gq \in \CL_2} H(-\intf{\Infty{i},\gq}), 
\]
and 
\[
\otc^{-1}\bigl((C_1, \Infty{i}]\big) = \bigcup_{\gp \in \CL_1} H(-\intf{\gp,\Infty{i}}), 
\]
Inverse images of subbasic sets are open in the Harrison topology of the space~$X(K)$. The space $X(K)$ is compact and $\Cuts(\Curve)$ is Hausdorff. A continuous bijection of a compact space onto a Hausdorff space is a homeomorphism.

The last assertion of our theorem follows  from Lemma~\ref{lem_prinicpal}.
\end{proof}

 Take a component $\Curve_i\subseteq \Curve$ and $\gp, \gq \in\Curve_i$, $\gp \neq \gq$. Assume that the interval $(\gp,\gq)$ does not contain~$\Infty{i}$. Then the interval $(\gp,\gq)$ is an ordered set with the ordering inherited from $\Curve_i \setminus \{\Infty{i}\}$.
We wish to identify the cuts of this ordered set with  cuts of $\Curve_i$ which induce them. We set
$\PCuts\bigl( (\gp, \gq)\bigr) = $
$$\bigl\{(\CL,\CU) \in \Cuts(\Curve_i) \mid \,
\CL \cap (\gp,\gq) \neq \emptyset \text{ and } \CU \cap (\gp,\gq) \neq \emptyset  \text{ or } \CU =  \up (\gp,\gq)  \text{ or } \CL = \lo (\gp,\gq)\bigr\}.$$

Take the finite sequence of points of $\Curve_i$, i.e.,
$ \gp_1\prec \gp_2\prec \dotsb \prec \gp_m$.  Then $\Curve_i$ can be expressed as a finite union
 \[
\Curve_i = \{\Infty{i}\} \,\dcup\; (\Infty{i}, \gp_1) \,\dcup\; \{\gp_1\}\,\dcup\; \dotsb
\,\dcup\;  (\gp_{m-1}, \gp_m) \,\dcup\; \{\gp_m\} \,\dcup\; (\gp_m, \Infty{i})
\]
of disjoint subsets (intervals and their endpoints).
Then the set $\Cuts(\Curve_i)$ of all  cuts of the component $\Curve_i$ can be expressed as the disjoint union
\begin{equation}\label{eq_Cuts=disjoint_union}
\Cuts(\Curve_i) = \PCuts\bigl( (\Infty{i}, \gp_1)\bigr) \,\dcup\; \dotsb 
\,\dcup\; \PCuts\bigl( (\gp_{m-1}, \gp_m)\bigr)
\,\dcup\; \PCuts\bigl( (\gp_{m}, \Infty{i})\bigr).
\end{equation}

Fix an element $x\in \F\setminus \kk$. It is transcendental over $\kk$ and $\kk(x)\subseteq \F$.
Proposition~\ref{prop_piecewise_monotonic} asserts that
we can choose a sequence $\gp_1\prec \gp_2\prec \dotsb \prec \gp_m$ in such a
way that $x$ is monotonic and has no poles on each of the above intervals. 

Take an interval $(\gp, \gq)$ on which $x$ is monotonic and without poles. By the Intermediate Value Theorem  (see \cite[Theorem 8.2]{Kne76b}), the projection  $\gr\mapsto x(\gr)$ is an order isomorphism  of $(\gp, \gq)$ onto  the interval $I:=\bigl(x(\gp),x(\gq)\bigr)$ or $I:=\bigl(x(\gq),x(\gp)\bigr)$ in $\kk$. This order isomorphism   induces an order isomorphism $\pi_x$ from  $\PCuts\bigl((\gp, \gq)\bigr)$  onto  $\Cuts(I)$, and hence also  onto $\PCuts(I) \subset \Cuts(\kk)$.

By Equation~\eqref{eq_Cuts=disjoint_union} we obtain a map 
\[
\pi_x: \Cuts(\Curve)\to \Cuts(\kk).
\]

Recall that for $f\in \F$ and $\gp\in\Curve$, $f(\gp)$ is the image $\xi_{\gp}(f)$ of $f$ under the place $\xi_{\gp}$ associated with $\CO_{\gp}$. When we apply the map $\cto$ to $\kk(x)$ (we will then denote it by $\cto_{\kk(x)}$), we have to keep in mind that the $\kk$-rational places on $\kk(x)$ are precisely the $(x-a)$-adic places $\xi_a$ that send $x$ to $a$, for all $a\in \kk$, together with the place $\xi_\infty$ that sends $\frac{1}{x}$ to 0. If $\gp\in\Curve$ and $x(\gp)=a\in \kk$, then the restriction of $\xi_{\gp}$ to $\kk (x)$ is $\xi_a\,$.

Suppose that $(\gp,\gq)$ is an interval in $\Curve_i$ on which the function $x$ is monotonic, and take $(\CL,\CU)\in \PCuts((\gp,\gq))$. Then, writing $\cto_{\F}$ when we apply the map $\cto$ on $\F$, 
\begin{align*}
\cto_{\F}\bigl((\CL,\CU)\bigr)
&= \bigl\{f\in \F\st \exists_{\gl\in \CL \cup \{\Infty{i}\}}\exists_{\gu\in \CU \cup \{\Infty{i}\}}\forall_{\gr\in (\gl,\gu)}\ f(\gr)>0\bigr\}\\
&= \bigl\{f\in \F\st \exists_{\gl\in (\CL\cap (\gp,\gq)) \cup \{\gp\}}\exists_{\gu\in (\CU\cap (\gp,\gq)) \cup \{\gq\}}\forall_{\gr\in (\gl,\gu)}\ f(\gr)>0\bigr\}\\
&= \bigl\{f\in \F\st \exists_{\gl\in (\CL\cap (\gp,\gq)) \cup \{\gp\}}\exists_{\gu\in (\CU\cap (\gp,\gq)) \cup \{\gq\}}\forall_{\gr\in (\gl,\gu)}\ \xi_{\gr} (f)>0\bigr\}.
\end{align*}
Let us consider the case where the function $x$ is increasing on $(\gp,\gq)$; the other case is symmetrical. Set $p=\xi_{\gp}(x)\in\kk$ if $x$ has no pole in $\gp$, and $p=-\infty$ otherwise. Analogously, set $q=\xi_{\gq}(x)\in\kk$ or $q=\infty$, respectively. Then $\pi_x((\CL,\CU))\in \PCuts((p,q))$, and we denote this cut by $(L,U)$. The restriction of the above ordering to $\kk (x)$ is
\begin{align*}
\lefteqn{\mbox{}\hspace*{-2em}\cto_{\F}\bigl((\CL,\CU)\bigr)\cap \kk(x)=}\\
&=\Bigl\{f\in \kk(x)\st \exists_{\gl\in (\CL\cap (\gp,\gq)) \cup \{\gp\}}\exists_{\gu\in (\CU\cap (\gp,\gq)) \cup \{\gq\}}\forall_{\gr\in (\gl,\gu)}\ \xi_{\gr}|_{\kk(x)}(f)>0\Bigr\}\\
&=\Bigl\{f\in \kk(x)\st \exists_{l\in (L\cap (p,q)) \cup \{p\}}\exists_{\gu\in (\CU\cap (p,q)) \cup \{q\}}\forall_{r\in (l,u)}\ \xi_{r}(f)>0\Bigr\}\\
&=\Bigl\{f\in \kk(x)\st \exists_{l\in L \cup \{-\infty\}}\exists_{\gu\in \CU \cup \{+\infty\}}\forall_{r\in (l,u)}\ \xi_{r}(f)>0\Bigr\}\\
&= \cto_{\kk(x)}\bigl((L,U)\bigr)\\
&= \cto_{\kk(x)}\bigl(\pi_x\bigl((\CL,\CU)\bigr)\bigr).
\end{align*}

We have proved:

\begin{prop}\label{diagram}
The following diagram commutes:
\begin{diagram}[PostScript,small]
\Cuts(\Curve) && \rTo^{\cto_{\F}} && \Orderings(\F)\\
\dTo<{\pi_x} &&&& \dTo>{\text{\rm res}} \\
\Cuts(\kk) && \rTo^{\cto_{\kk(x)}} && \Orderings\bigl(\kk(x)\bigr)
\end{diagram}
where \textup{res} is the restriction $\ordering\mapsto \ordering\cap \kk(x)$.
\end{prop}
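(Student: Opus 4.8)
The plan is to prove commutativity of the diagram by a direct unwinding of the definitions, leveraging the computation already carried out just before the statement. Concretely, I would start by fixing a cut $C = (\CL, \CU) \in \Cuts(\Curve)$ lying in a component $\Curve_i$, and choose, via Proposition~\ref{prop_piecewise_monotonic}, a finite partition $\gp_1 \prec \dotsb \prec \gp_m$ of $\Curve_i$ on which $x$ is monotonic and pole-free. By the disjoint-union decomposition~\eqref{eq_Cuts=disjoint_union}, the cut $C$ belongs to exactly one piece $\PCuts\bigl((\gp,\gq)\bigr)$ for some consecutive pair (allowing $\gp$ or $\gq$ to be $\Infty{i}$, understood appropriately). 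This reduces the whole claim to the single-interval situation, where $x$ restricted to $(\gp,\gq)$ is a strict monotonic (say increasing, the decreasing case being symmetric) order isomorphism onto an interval $I \subset \kk$, inducing the order isomorphism $\pi_x$ from $\PCuts\bigl((\gp,\gq)\bigr)$ onto $\PCuts(I)$.

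The heart of the argument is then precisely the chain of equalities displayed in the paragraph preceding the statement: for $C = (\CL, \CU) \in \PCuts\bigl((\gp,\gq)\bigr)$, one shows
\[
\cto_{\F}(C) \cap \kk(x) = \cto_{\kk(x)}\bigl(\pi_x(C)\bigr).
\]
I would present this by first rewriting $\cto_{\F}(C)$ using the observation that in the definition of $\cto_{\F}$ the witnesses $\gl \in \CL \cup \{\Infty{i}\}$ and $\gu \in \CU \cup \{\Infty{i}\}$ can be replaced by witnesses in $(\CL \cap (\gp,\gq)) \cup \{\gp\}$ and $(\CU \cap (\gp,\gq)) \cup \{\gq\}$ — this is exactly where the definition of $\PCuts\bigl((\gp,\gq)\bigr)$ is used, since membership in that set guarantees the cut sets meet the interval (or coincide with $\up(\gp,\gq)$, $\lo(\gp,\gq)$), so no sign information is lost by shrinking the range of quantifiers. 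Then I would intersect with $\kk(x)$, use that $f(\gr) = \xi_\gr(f) = \xi_{x(\gr)}(f)$ for $f \in \kk(x)$ because the restriction of $\xi_\gr$ to $\kk(x)$ is the $(x - x(\gr))$-adic place $\xi_{x(\gr)}$, transport the quantifiers through the order isomorphism $\gr \mapsto x(\gr)$ to obtain quantifiers over $l \in (L \cap (p,q)) \cup \{p\}$, $u \in (U \cap (p,q)) \cup \{q\}$ (where $(L,U) = \pi_x(C)$ and $p = x(\gp)$ or $-\infty$, $q = x(\gq)$ or $+\infty$), and finally re-expand those to witnesses in $L \cup \{-\infty\}$ and $U \cup \{+\infty\}$ — recognizing the result as $\cto_{\kk(x)}\bigl((L,U)\bigr)$ by the description of the $\kk$-rational places of $\kk(x)$ recalled above.

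The step I expect to require the most care — the main obstacle — is the bookkeeping at the endpoints: justifying that replacing the witness sets $\CL \cup \{\Infty{i}\}$, $\CU \cup \{\Infty{i}\}$ by $(\CL \cap (\gp,\gq)) \cup \{\gp\}$, $(\CU \cap (\gp,\gq)) \cup \{\gq\}$ genuinely preserves the ordering $\cto_{\F}(C)$, and correspondingly the translation between the "finite" endpoints $p, q \in \kk$ and the "infinite" symbols $-\infty, +\infty$ in $\cto_{\kk(x)}$. One must treat separately the cases where $x$ has a pole at $\gp$ (so $p = -\infty$, and the interval $(\gp,\gq)$ is sent to an unbounded-below interval of $\kk$) or at $\gq$, and the case where $\gp$ or $\gq$ equals $\Infty{i}$; in each, one checks that the set of functions satisfying the eventual-positivity condition is unchanged, which ultimately rests on the fact that a nonzero $f \in \F$ has finitely many zeros and poles, so its sign is eventually constant as $\gr$ approaches either endpoint of the interval — exactly the kind of argument used in the proof of Proposition~\ref{prop_cut->ordering}. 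Once this endpoint bookkeeping is dispatched, the remaining equalities are the routine transport of quantifiers through an order isomorphism, and the commutativity of the diagram follows, since $\cto_{\F}(C) \cap \kk(x)$ is by definition $\mathrm{res}\bigl(\cto_{\F}(C)\bigr)$.
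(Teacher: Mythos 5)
Your proposal is correct and follows essentially the same route as the paper: the paper's proof of Proposition~\ref{diagram} is precisely the reduction, via Proposition~\ref{prop_piecewise_monotonic} and the decomposition~\eqref{eq_Cuts=disjoint_union}, to an interval on which $x$ is monotonic, followed by the displayed chain of equalities rewriting the witnesses and transporting them through $\gr\mapsto x(\gr)$ using that $\xi_\gr$ restricts to $\xi_{x(\gr)}$ on $\kk(x)$. Your extra attention to the endpoint bookkeeping (poles of $x$ giving $\pm\infty$, the case $\gp$ or $\gq$ equal to $\Infty{i}$) only makes explicit what the paper treats tacitly.
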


Now we can provide a different proof for the proposition stated by C. Scheiderer in the appendix to \cite{GBH13}:

\begin{prop}
The space $\Orderings(\F)$ is homeomorphic to $\Orderings\bigl(\kk(x)\bigr)$.
\end{prop}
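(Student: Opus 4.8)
The plan is to reduce the statement, via Theorem~\ref{thm_Cuts_homeo_Orderings} and the classical homeomorphism $\Orderings\bigl(\kk(x)\bigr)\cong\Cuts(\kk)$, to the assertion that $\Cuts(\Curve)$ is homeomorphic to $\Cuts(\kk)$, and then to prove the latter by cutting $\Cuts(\Curve)$ into finitely many clopen pieces, each homeomorphic to $\Cuts(\kk)$, and invoking a self-similarity property of $\Cuts(\kk)$. I emphasise at the outset that neither $\pi_x$ nor the restriction map of Proposition~\ref{diagram} is a homeomorphism in general: the finite-to-one map $\gr\mapsto x(\gr)$ need be neither injective nor surjective onto $\PP^1\kk$, so $\pi_x$ is used here only as a bookkeeping device, not as the homeomorphism we are after.

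First I would fix $x\in\F\setminus\kk$ and apply Proposition~\ref{prop_piecewise_monotonic} to each component $\Curve_i$, choosing the points $\gp_1\prec\dots\prec\gp_m$ so that $x$ is monotonic and pole-free on every interval occurring in~\eqref{eq_Cuts=disjoint_union}. This presents $\Cuts(\Curve)$ as a disjoint union of \emph{finitely} many pieces $\PCuts\bigl((\gp,\gq)\bigr)$. The next step is to observe that each such piece is clopen in $\Cuts(\Curve)$: unwinding the definition, $\PCuts\bigl((\gp,\gq)\bigr)$ is exactly the order interval $[\,\upcut{\gp},\locut{\gq}\,]$ of the linearly ordered space $\Cuts(\Curve_i)$ (with the conventions $\upcut{\Infty{i}}=\min$, $\locut{\Infty{i}}=\max$), and since $\locut{\gp}$ is the immediate predecessor of $\upcut{\gp}$ and $\upcut{\gq}$ the immediate successor of $\locut{\gq}$ whenever $\gp,\gq\neq\Infty{i}$, both rays $\{C\succeq\upcut{\gp}\}$ and $\{C\preceq\locut{\gq}\}$ are simultaneously open and closed; as $\Cuts(\Curve_i)$ is clopen in $\Cuts(\Curve)$, the claim follows and $\Cuts(\Curve)$ carries the disjoint-union topology of these pieces.

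Next I would identify each piece with $\Cuts(\kk)$. As recorded in the discussion preceding Proposition~\ref{diagram}, the monotone pole-free map $\gr\mapsto x(\gr)$ induces an order isomorphism of $\PCuts\bigl((\gp,\gq)\bigr)$ onto $\Cuts(I)$, where $I$ is the open interval of $\kk$ with endpoints $x(\gp),x(\gq)$ read in $\kk\cup\{\pm\infty\}$; an order isomorphism of linearly ordered sets is a homeomorphism for the order topologies, and on the convex subset $\PCuts\bigl((\gp,\gq)\bigr)\subseteq\Cuts(\Curve_i)$ the order topology agrees with the subspace topology. Because $\kk$ is real closed, every nonempty open interval $I$ of $\kk$ is order-isomorphic to $\kk$ by an explicit semialgebraic map (e.g.\ $s\mapsto s/(1-s^2)$ on $(-1,1)$ or $s\mapsto s-1/s$ on $(0,\infty)$, pre- and post-composed with affine maps), hence $\Cuts(I)\cong\Cuts(\kk)$. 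Thus $\Cuts(\Curve)$ is homeomorphic to a disjoint union of $M$ clopen copies of $\Cuts(\kk)$ for some integer $M\ge 1$.

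It remains to show that $\Cuts(\kk)$ is homeomorphic to the disjoint union of $M$ copies of itself. Here I would choose $M-1$ points $c_1\prec\dots\prec c_{M-1}$ in $\kk$ and split $\Cuts(\kk)$ at them into the $M$ clopen order intervals $\{C\preceq c_1^{-}\}$, $\{c_j^{+}\preceq C\preceq c_{j+1}^{-}\}$ for $1\le j\le M-2$, and $\{c_{M-1}^{+}\preceq C\}$, where $c^{-},c^{+}$ denote the two principal cuts of $\kk$ at $c$; each of these is canonically the cut space of one of the open intervals $(-\infty,c_1)$, $(c_j,c_{j+1})$, $(c_{M-1},\infty)$, hence again homeomorphic to $\Cuts(\kk)$. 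Concatenating all the identifications yields
\[
\Orderings(\F)\ \cong\ \Cuts(\Curve)\ \cong\ \Cuts(\kk)\dcup\cdots\dcup\Cuts(\kk)\ \cong\ \Cuts(\kk)\ \cong\ \Orderings\bigl(\kk(x)\bigr).
\]
I expect the genuinely delicate part to be not any single deep argument but the uniform endpoint bookkeeping threaded through the whole proof — open versus closed ends, the distinguished points $\Infty{i}$, and the cases $x(\gp)=\pm\infty$ — which is exactly what is needed to make the identifications $\PCuts\bigl((\gp,\gq)\bigr)\cong\Cuts(\kk)$ and the clopen splitting of $\Cuts(\kk)$ fully rigorous; the self-similarity step itself, once set up, is short.
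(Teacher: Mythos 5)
Your proof is correct and follows essentially the same route as the paper: reduce via Theorem~\ref{thm_Cuts_homeo_Orderings} and the classical identification $\Orderings\bigl(\kk(x)\bigr)\cong\Cuts(\kk)$ to showing $\Cuts(\Curve)\cong\Cuts(\kk)$, decompose $\Cuts(\Curve)$ through Proposition~\ref{prop_piecewise_monotonic} and Equation~\eqref{eq_Cuts=disjoint_union} into finitely many pieces $\PCuts\bigl((\gp,\gq)\bigr)$ homeomorphic to cut spaces of open intervals in $\kk$, and reassemble them into $\Cuts(\kk)$ by splitting it at finitely many points. Your extra bookkeeping (clopenness of the pieces, explicit order isomorphisms of intervals) just makes explicit what the paper leaves implicit.
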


\begin{proof}
Take a transcendental element $x \in K$ and  on every component of $\Curve$ choose a sequence  of points  such that  $x$ is defined and monotonic on each interval between two consecutive points. By Equation~\eqref{eq_Cuts=disjoint_union}, the set $\Cuts(\Curve)$ is a disjoint union of, say, $k$-many sets of the form $\PCuts\bigl((\gp, \gq)\bigr)$, each of which is homeomorphic to a set $\PCuts(I)$, where $I$ is an open interval in $\kk$. For any two open intervals $I_1$ and $I_2$ in $\kk$  there is a rational function $f$ which induces an order isomorphism between  $I_1$ and $I_2$, and hence the homeomorphism between $\PCuts(I_1)$ and $\PCuts(I_2)$. Now choose $k-1$ many  elements $a_1<\dotsb <a_{k-1}$ in $\kk$. By what we observed, $\Cuts(\Curve)$ is homeomorphic to the union $\PCuts\bigl((-\infty, a_1)\bigr)\, \dot\cup\,\dotsc\,\dot\cup\, \PCuts\bigl((a_{k-1},+\infty)\bigr) = \Cuts(\kk)$.
\end{proof}

Note that $\pi_x$ may fail to be a homeomorphism from $\Cuts(\Curve)$ to $\Cuts(\kk)$ as it may turn out to be neither injective nor surjective. 

In the literature (e.g.\ in \cite{Gil81}) it is common practise to use the function
\[
\psi: X(\kk(x))\ni \beta\mapsto \bigl(\{a\in\kk\st x-a\in \beta\},\{b\in\kk\st b-x\in\beta\}\bigr) \in \Cuts(\kk).
\]
We show that our function $\cto_{\kk(x)}$ is compatible with this practise:

\begin{prop} 
The functions $\psi$ and $\cto_{\kk(x)}$ are equal on $\kk(x)$.
\end{prop}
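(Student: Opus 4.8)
The plan is to keep $\psi$ exactly as it is defined---a map $\Orderings\bigl(\kk(x)\bigr)\to\Cuts(\kk)$---and to establish the pointwise identity
\[
\cto_{\kk(x)}\bigl(\psi(\ordering)\bigr)=\ordering\qquad\text{for every }\ordering\in\Orderings\bigl(\kk(x)\bigr).
\]
Since $\cto_{\kk(x)}$ is a bijection---this is Theorem~\ref{thm_Cuts_homeo_Orderings} applied to $\kk(x)$, whose smooth projective model is $\PP^1\kk$---the displayed identity forces $\psi=\cto_{\kk(x)}^{-1}$: the two maps then describe one and the same correspondence between the orderings of $\kk(x)$ and the cuts of $\kk$, which is the content of the proposition.

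Fix $\ordering$ and write $\psi(\ordering)=(L,U)$, so $L=\{a\in\kk\st x-a\in\ordering\}$ and $U=\{b\in\kk\st b-x\in\ordering\}$. First I would record the routine facts that $(L,U)$ is a cut of $\kk$: for each $c\in\kk$ exactly one of $x-c$, $c-x$ lies in $\ordering$ since $x$ is transcendental; $L$ is an initial and $U$ a final segment because $x-a'=(x-a)+(a-a')$; and every element of $L$ precedes every element of $U$ because then $a<x<b$ with respect to $\ordering$. Hence $\cto_{\kk(x)}\bigl((L,U)\bigr)$ is defined, and by the formula for $\cto_{\kk(x)}$ obtained in the display preceding Proposition~\ref{diagram} (equivalently, by Proposition~\ref{prop_cut->ordering} specialised to $\F=\kk(x)$) it equals
\[
\bigl\{f\in\kk(x)\st\exists_{l\in L\cup\{-\infty\}}\exists_{u\in U\cup\{+\infty\}}\forall_{r\in(l,u)}\ \xi_r(f)>0\bigr\}.
\]
Both this set and $\ordering$ are orderings of $\kk(x)$ (the former by Proposition~\ref{prop_cut->ordering}), so it is enough to prove the inclusion $\ordering\subseteq\cto_{\kk(x)}\bigl((L,U)\bigr)$.

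That inclusion is the heart of the argument. Given a nonzero $f\in\ordering$, I would factor it over the real closed field $\kk$ as $f=c\prod_{i=1}^{k}(x-c_i)^{e_i}$ with $c\in\kk$, $c\neq 0$, with $c_1<\dots<c_k$ the zeros and poles of $f$ in $\kk$, and $e_i\in\ZZ\setminus\{0\}$ (the case $k=0$ meaning $f$ is constant). Let $l$ be the largest $c_i$ lying in $L$, or $l=-\infty$ if there is none, and dually let $u$ be the smallest $c_i$ lying in $U$, or $u=+\infty$. Because $L\dcup U=\kk$, no $c_i$ lies in $(l,u)$, so $r\mapsto\xi_r(f)$ has a constant nonzero sign on $(l,u)$; moreover for $r\in(l,u)$ one has $r-c_i>0\Leftrightarrow c_i\in L$ and $r-c_i<0\Leftrightarrow c_i\in U$, whence $\sgn \xi_r(f)=\sgn(c)\cdot(-1)^{\sum_{c_i\in U}e_i}$ there. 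On the other hand $\sgn_\ordering(x-c_i)=+1$ exactly when $c_i\in L$ and $=-1$ exactly when $c_i\in U$, so $\sgn_\ordering f=\sgn(c)\cdot(-1)^{\sum_{c_i\in U}e_i}$ as well; since $f\in\ordering$ this is $+1$. Therefore $\xi_r(f)>0$ for all $r\in(l,u)$, with $l\in L\cup\{-\infty\}$ and $u\in U\cup\{+\infty\}$, i.e.\ $f\in\cto_{\kk(x)}\bigl((L,U)\bigr)$. As $f$ was arbitrary, $\ordering\subseteq\cto_{\kk(x)}\bigl((L,U)\bigr)$, hence $\ordering=\cto_{\kk(x)}\bigl(\psi(\ordering)\bigr)$, and the proof closes as described in the first paragraph.

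The main---though modest---obstacle is precisely the sign bookkeeping just carried out: one must see that the cut $\psi(\ordering)$ separates the finite set of zeros and poles of $f$ exactly along the boundary between the factors $x-c_i$ that are $\ordering$-positive and those that are $\ordering$-negative, and one must dispatch the degenerate subcases in which $l$ or $u$ equals $\pm\infty$, or $f$ is constant, where the relevant products are empty and $\xi_r(f)$ has the sign of $c=\sgn_\ordering f$ on all of $\kk$. Everything else is routine, or already done in the paper.
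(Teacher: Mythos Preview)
Your overall strategy coincides with the paper's: both arguments establish $\cto_{\kk(x)}\bigl(\psi(\ordering)\bigr)=\ordering$ for every $\ordering$ and then conclude by the bijectivity of $\cto_{\kk(x)}$. The difference lies in how the equality of the two orderings is verified. The paper checks only that $\cto_{\kk(x)}(\psi(\ordering))$ and $\ordering$ agree on all linear polynomials $x-c$, $c\in\kk$ (which is immediate from the definitions), and then invokes the well-known fact that, since $\kk$ is real closed, an ordering of $\kk(x)$ is determined by the signs of these linear polynomials. Your route---factoring a general $f$ and tracking signs---amounts to reproving that fact inline; it is more self-contained but longer.

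One genuine slip to fix: over a real closed field a nonzero $f\in\kk(x)$ does \emph{not} in general factor as $c\prod_i(x-c_i)^{e_i}$ with $c\in\kk$; irreducible quadratic factors $(x-a)^2+b^2$, $b\neq0$, may occur (think of $f=x^2+1$). You should write $f=c\,q(x)\prod_i(x-c_i)^{e_i}$ with $q$ a product of such monic irreducible quadratics. Since each is a sum of two squares in $\kk(x)$, $q$ is strictly positive at every $r\in\kk$ and positive in every ordering, so it contributes $+1$ to both $\sgn\xi_r(f)$ and $\sgn_\ordering f$, and your two sign computations go through unchanged once this factor is inserted. With that correction your proof is complete.
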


\begin{proof}
It suffices to show that $\otc\bigl(\psi(\beta)\bigr)=\beta$ for any $\beta\in X(\kk(x))$. If $l\in \{a\in\kk\st x-a\in \beta\}$, then $x-l$ has positive values on all of $(l,+\infty)$ and therefore, $x-a\in\otc \bigl(\psi(\beta)\bigr)$. Symmetrically, $u-x\in\otc\bigl(\psi(\beta)\bigr)$ for all $u\in \{b\in\kk\st b-x\in\beta\}$. It is well know that since $\kk$ is real closed, an ordering on $\kk(x)$ is uniquely determined by the signatures of the linear polynomials $x-c$, $c\in \kk$. Thus we obtain that $\otc(\psi(\beta))=\beta$.
\end{proof}

\section{Ultrametric balls}\label{sec_uballs}
Assume that $\kk$ is a real closed field and let $v$ be its natural valuation, with value group $\VG$. The field $\kk$ is an \term{ultrametric space} with the ultrametric distance
\[
d:\kk \times \kk \to \VG \cup \{\infty\},\qquad d(a,b) = v(b-a).
\]
When $\kk$ is nonarchimedean, this ultrametric distance is non-trivial. The value group $\VG$ is linearly ordered and so we can consider cuts on it, defined in a standard way. Fix a cut $(\VL, \VU)$ of $\VG$. The set 
\[
B_\VU(a) = \bigl\{b\in \kk \st d(a,b) \in \VU\cup \{\infty\}\bigr\}
\]
is called an \term{ultrametric ball} centered in $a$ with radius $\VU$. We may characterize ultrametric balls in $\kk$ by the following property: a subset $B$ is an ultrametric ball in $\kk$ if and only if 
\begin{equation} \label{eq_ballproperty}
d(a,b)>d(a,c)
\end{equation}  
for every $a,b \in B$ and $c\in \kk \setminus B$. Recall that every point of an ultrametric ball is its center and any two ultrametric balls which have a non-empty intersection are comparable by inclusion.

The structure of an ultrametric space extends naturally from $\kk$ to a finitely dimensional affine space $\AA^n\kk$ over $\kk$. It is a basic and well known fact that over the reals all the metrics of the form
\begin{align*}
d_p\bigl((x_1, \dotsc, x_n), (y_1, \dotsc, y_n)\bigr) &:= \Bigl( \sum_{i\leq n} |x_i-y_i|^p\Bigr)^{\sfrac1p},
& \text{for }p= 1,2,\dotsc\\
d_\infty\bigl((x_1, \dotsc, x_n), (y_1, \dotsc, y_n)\bigr) &:= \max_{i\leq n}\bigl\{|x_i-y_i|\bigr\}
\end{align*}
are equivalent. Switching from $\RR$ to an arbitrary, non-archimedean real closed field~$\kk$, we may define corresponding ultrametrics $d_p$ for $p\in \NN\cup\{\infty\}$ by setting
\begin{align*}
d_p\bigl((x_1, \dotsc, x_n), (y_1, \dotsc, y_n)\bigr) &:= v\Bigl( \sum_{i\leq n} |x_i-y_i|^p\Bigr)^{\sfrac1p},
& \text{for }p= 1,2,\dotsc\\
d_\infty\bigl((x_1, \dotsc, x_n), (y_1, \dotsc, y_n)\bigr) &:= \min_{i\leq n}\bigl\{v(x_i-y_i)\bigr\},
\end{align*}
where $v$ is the natural valuation of $\kk$. We claim that all of these ultrametrics are not only equivalent but actually equal.

\begin{prop}
All the ultrametrics $d_p:\AA^n\kk\times \AA^n\kk\to \VG$ are equal.
\end{prop}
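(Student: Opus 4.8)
The plan is to show that for any two points $x=(x_1,\dotsc,x_n)$ and $y=(y_1,\dotsc,y_n)$ in $\AA^n\kk$, all the values $d_p(x,y)$ coincide with $d_\infty(x,y)=\min_{i\le n} v(x_i-y_i)$. Set $w_i:=v(x_i-y_i)\in\VG\cup\{\infty\}$ and $\mu:=\min_i w_i$. The key observation is the fundamental ultrametric inequality for the natural valuation $v$ of $\kk$: for any finite sum, $v\bigl(\sum_i t_i\bigr)\ge\min_i v(t_i)$, with equality whenever the minimum is attained by a unique index, and more generally whenever the terms attaining the minimum do not cancel. Here all the relevant terms $|x_i-y_i|^p$ are nonnegative (being even powers, or absolute values raised to a power, in the real closed field $\kk$), so no cancellation can occur among them.

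First I would handle the case $p\in\{1,2,\dotsc\}$. Since $v$ is a valuation, $v\bigl(|x_i-y_i|^p\bigr)=p\cdot v(x_i-y_i)=p\,w_i$ (interpreting $p\cdot\infty=\infty$). Because the summands are all nonnegative, the sum $\sum_{i\le n}|x_i-y_i|^p$ has valuation exactly $\min_i(p\,w_i)=p\,\mu$; there is genuinely no cancellation to worry about, which is the one place the sign condition is used. Dividing by $p$ (i.e. applying the formal $p$-th root, which on the value level is division of the value by $p$ in the divisible hull — but note $p\mu$ is already $p$ times an element of $\VG$, so the result lands back in $\VG\cup\{\infty\}$) gives $d_p(x,y)=\tfrac1p\cdot p\mu=\mu=d_\infty(x,y)$. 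Strictly, I would phrase this without invoking the divisible hull: $v\bigl((\sum_i|x_i-y_i|^p)^{1/p}\bigr)$ is defined in the paper as $\tfrac1p\,v\bigl(\sum_i|x_i-y_i|^p\bigr)$, and since that inner valuation equals $p\mu$, the quotient is $\mu$.

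The main (and really only) obstacle is making the ``no cancellation'' step rigorous: one must justify that $v\bigl(\sum_{i\le n} a_i\bigr)=\min_i v(a_i)$ when every $a_i\ge 0$ in the real closed field $\kk$. This follows because the convex hull $\CO_v=\conv_v(\QQ)$ of the rationals is a valuation ring whose residue field is $\RR$ (or a subfield thereof), and sums of nonnegative elements cannot drop in valuation: if $a_j$ realizes the minimum $\mu=v(a_j)$, then each $a_i/a_j\ge 0$ lies in $\CO_v$, so $\sum_i a_i/a_j$ is a sum of nonnegative elements of $\CO_v$, hence its residue is a sum of nonnegative reals including the residue $1$ of $a_j/a_j$, therefore nonzero; thus $\sum_i a_i/a_j$ is a unit of $\CO_v$ and $v\bigl(\sum_i a_i\bigr)=v(a_j)=\mu$. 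With this lemma in hand the computation above is immediate for all finite $p$, and the case $p=\infty$ is the definition itself, so all the $d_p$ agree.
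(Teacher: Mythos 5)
Your proof is correct, and it reaches the same conclusion by a slightly different mechanism than the paper. The paper splits the claim into two inequalities: $d_p\geq d_\infty$ is just the ultrametric inequality $v\bigl(\sum_i a_i\bigr)\geq\min_i v(a_i)$ (no positivity needed), and the reverse inequality $d_\infty\geq d_p$ is obtained from the fact that the natural valuation is weakly decreasing on $\kk_+\cup\{0\}$, i.e.\ $0\leq a\leq b$ implies $v(a)\geq v(b)$, applied to $\lvert x_j-y_j\rvert^p\leq\sum_i\lvert x_i-y_i\rvert^p$. You instead prove in one stroke the exact statement that $v\bigl(\sum_i a_i\bigr)=\min_i v(a_i)$ whenever all $a_i\geq 0$, by dividing through by a summand of minimal value and noting that the residue is a sum of nonnegative elements of the (archimedean, hence real-embeddable) residue field containing $1$, hence nonzero, so the normalized sum is a unit of $\conv_v(\QQ)$. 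The two arguments rest on the same essential ingredient---compatibility of $v$ with the ordering of $\kk$ (convexity of the valuation ring): the residue map sending nonnegative elements to nonnegative residues is exactly what the paper's monotonicity statement encodes. The paper's route is marginally more elementary, since it never leaves $\kk$ and needs no residue field; yours isolates a clean and reusable ``no cancellation'' lemma at the cost of invoking the residue map. Your handling of the $p$-th root (reading $d_p$ as $\tfrac1p$ times the valuation of the sum, so the value $p\mu$ returns to $\mu\in\VG$) matches the paper's computation; the only detail left implicit on both sides is the trivial case $x=y$, where both sides equal $\infty$.
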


\begin{proof}
Fix any $p\in \NN$. We will show that $d_p\equiv d_\infty$. Take two points $P= (x_1, \dotsc, x_n)$, $Q= (y_1, \dotsc, y_n)\in \AA^n\kk$. Then
\begin{multline*}
d_p(P,Q) 
= \frac{1}{p} v\Bigl( \sum_{i\leq n} \lvert x_i-y_i\rvert^p\Bigr) 
\geq \frac{1}{p} \min_{i\leq n}\bigl\{ v\lvert x_i-y_i\rvert^p\bigr\} \\
= \min_{i\leq n}\bigl\{ v(x_i-y_i)\bigr\} 
= d_\infty(P,Q).
\end{multline*}
This gives us one inequality. To prove the opposite one, recall that the natural valuation is weakly decreasing on $\kk_+\cup \{0\}$. Now, all $\lvert x_j-y_j\rvert$ are clearly non-negative. Since for every $j\leq n$ we have $\lvert x_j-y_j\rvert^p\leq \sum_{i\leq n} \lvert x_i-y_i\rvert^p$, it follows that
\[
v(x_j-y_j) 
= \frac{1}{p}v\bigl(\lvert x_j-y_j\rvert^p\bigr) 
\geq \frac{1}{p}v\Bigl( \sum_{i\leq n} \lvert x_i-y_i\rvert^p\Bigr) 
= d_p(P,Q).
\]
In particular, $d_\infty(P,Q)= \min\limits_{j\leq n}\bigl\{ v(x_j-y_j)\bigr\}\geq d_p(P,Q)$, as claimed.
\end{proof}

\section{Ball cuts and $\RR$-places}\label{sec_ball_cuts}
In this section we prove our second main result, namely a criterion for two orderings of $\F$ to be associated with a single $\RR$-place of $\F$. To this end we need first to introduce a notion of ball cuts on a curve. Let $v$ be again the natural valuation of~$\kk$. Any ultrametric ball $\ball$ in $\kk$ determines two cuts of~$\kk$, namely:
\begin{align*}
\ball^- = \bigl( \{a\in \kk\st a< \ball\}, \up\ball \bigr),\\
\ball^+ = \bigl( \lo\ball, \{a\in \kk\st a> \ball\}\bigr).
\end{align*}
The cuts $\ball^+$, $\ball^-$ are called \term{ball-cuts} in $\kk$. In particular, the cuts $(\kk,\emptyset)$ and $(\emptyset, \kk)$ are ball-cuts, since they correspond to the ultrametric ball $\ball = \kk$. Notice also that for every $b\in \kk$, the principal cuts $\bigl( \{a\in \kk\st a< b\}, \{a\in \kk\st a\geq b\}\bigr)$ and $\bigl( \{a\in \kk\st a\leq b\}, \{a\in \kk\st a> b\}\bigr)$ are ball-cuts, too. They correspond to the ultrametric ball $\{b\}$.

We have the following characterization of ball cuts in $\kk$.

\begin{prop}
Take a cut $C$ in $\kk$ and the  corresponding  ordering $\ordering$ of $\kk(x)$. Let $v_\ordering$ be the natural valuation of the rational function field~$\kk(x)$ associated with~$\ordering$, with value group $v_\ordering \kk(x)$. Then
\[
C \text{ is a ball cut} \iff \bigl[v_\ordering \kk(x):2v_\ordering \kk(x)\bigr] = 2.
\] 
\end{prop}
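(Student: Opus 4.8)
The plan is to analyze the natural valuation $v_\ordering$ of $\ordering$ on $\kk(x)$ directly in terms of the cut $C = (L,U)$. For an ordering $\ordering$ of $\kk(x)$ corresponding to a cut $C$ of $\kk$, the natural valuation $v_\ordering$ is the convex hull of $\QQ$ (or equivalently of $\kk$-coefficients when $\kk$ is archimedean, but in general of the prime field); one checks that $v_\ordering$ restricted to $\kk$ is the natural valuation $v$ of $\kk$, and that the value group $v_\ordering\kk(x)$ is generated over $\VG = v\kk$ by the single value $v_\ordering(x - c)$ for a suitable $c$ — namely, $v_\ordering$ measures how close $x$ is to the cut $C$. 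First I would recall the well-known description: if $C$ is the cut $C^+_b$ or $C^-_b$ principal at a point $b\in\kk$, then $v_\ordering(x-b)$ is positive and cofinal above $\VG$ in the sense that $v_\ordering\kk(x) = \VG \oplus \ZZ\cdot v_\ordering(x-b)$ with $v_\ordering(x-b)$ lying in the "gap" just above all of $\VG$; in general, $v_\ordering(x-c)$ for $c$ near the cut realizes the "width" of the cut as seen by $v$.

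The key dichotomy I would set up is: the cut $C$ of $\kk$ induces, via the ultrametric $d(a,b) = v(b-a)$, a cut on the value group $\VG$ — roughly, the set of values $v(a-b)$ realized as $a$ ranges over $L$ and $b$ over $U$ (together with $v(b-b')$ for $b,b'$ on the same side). A cut $C$ is a ball cut precisely when this induced cut on $\VG$ is "attained" or "principal from above" in the appropriate sense, i.e. when there is a ball $B$ with $C = B^\pm$; equivalently, the set $\{v(a-b) : a\in L, b\in U\}$ has no maximum in $\VG$ but the radius cut of the putative ball sits at a definite place. I would make precise that $C$ is a ball cut iff the family of values $\{d(a,b)\}$ separating the two sides is \emph{eventually constant}, so that $x$ can be translated to have a single dominant term; in the non-ball case the distances from $x$ to elements on the two sides form a strictly cofinal/coinitial configuration with no jump. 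Then I would compute $v_\ordering\kk(x)$ in each case: in the ball-cut case one gets $v_\ordering(x-c)$ generating a $\ZZ$-summand not already $2$-divisible inside the value group (because $v_\ordering(x-c)$ is a "new" generator of odd index-$2$ type over $\VG + \text{stuff}$, and $\VG$ itself is divisible since $\kk$ is real closed), so $[v_\ordering\kk(x) : 2v_\ordering\kk(x)] = 2$; in the non-ball case $v_\ordering(x-c)$ can be halved inside $v_\ordering\kk(x)$ using elements of $\kk$ approximating $c$ from the cut, so $v_\ordering\kk(x)$ stays divisible and the index is $1$.

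Concretely, the forward direction ($C$ a ball cut $\Rightarrow$ index $2$): if $C = B^\pm$ for a ball $B$ of radius $\gamma$ (a cut on $\VG$), pick $c$ a center of $B$; then $v_\ordering(x-c)$ equals exactly $\gamma$ sitting at the top/bottom of the radius cut, and since $\VG$ is divisible while $\gamma$ is not $2$-divisible \emph{within} $v_\ordering\kk(x)$ — any $y\in\kk(x)$ with $2v_\ordering(y) = \gamma$ would force an element of $\kk$ at distance $\gamma/2$ from $c$, i.e. strictly inside $B$ but separating the two sides, contradiction — we get $v_\ordering\kk(x) = \VG \oplus \ZZ\gamma$ and the index is $2$. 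For the converse, I would argue the contrapositive: if $C$ is not a ball cut, then for every $c\in\kk$ the value $v_\ordering(x-c)$ is \emph{not} isolated — one can find $c'\in\kk$ on the appropriate side with $v(c-c')$ strictly between $v_\ordering(x-c)$ and the rest, allowing $x-c = (x-c') + (c'-c)$ to express a halving, so every value in $v_\ordering\kk(x)$ is already divisible by $2$ and the index is $1$. The main obstacle I anticipate is making the ultrametric "radius cut on $\VG$ induced by $C$" rigorous and proving the clean equivalence "$C$ is a ball cut $\iff$ this induced cut is principal (attained, or a jump)"; once that combinatorial-ultrametric fact is in hand, the value-group index computation is a short Baer–Krull-style argument. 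I would also need to handle the degenerate cases $C = (\kk,\emptyset)$ and $C=(\emptyset,\kk)$ and the principal point-cuts $C^\pm_b$ separately (these are ball cuts, corresponding to $B = \kk$ and $B = \{b\}$ respectively), checking the index is $2$ there directly — e.g. for $C^\pm_b$ one has $v_\ordering(x-b)$ infinite-type positive and $\kk$ archimedean-in-the-residue, giving the rank-one extension with index $2$.
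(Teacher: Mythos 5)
Your overall strategy -- computing $v_\ordering\kk(x)$ directly from the cut instead of quoting the characterization of $\RR$-places of $\kk(x)$ -- is genuinely different from the paper's proof, which simply combines the Baer--Krull theorem, the Abhyankar inequality and the cited result [KMO11, Theorem~2.1]; in effect you are trying to reprove the outsourced ingredient. That is feasible, but as written both directions have real gaps. In the forward direction, to get index exactly $2$ you need two facts that you only assert: (i) $\gamma:=v_\ordering(x-c)\notin\VG$ for a center $c$ of the ball, and (ii) $v_\ordering\kk(x)=\VG+\mathbb{Z}\gamma$. Your justification of non-$2$-divisibility (``any $y$ with $2v_\ordering(y)=\gamma$ would force an element of $\kk$ at distance $\gamma/2$ from $c$'') does not follow and does not even parse well: since $\VG$ is divisible, $\kk$ contains elements of every value, and the issue is not whether $\gamma$ can be halved inside the big group but whether $\gamma$ lies in $\VG$ at all and whether any further new values occur. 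Point (i) is not an order-theoretic triviality when the radius cut of the ball has an endpoint in $\VG$ (attained radius): there one needs an argument using that the residue field of the natural valuation is archimedean (if $v_\ordering(x-c)=v(a)$ with $a\in\kk$, the residue of $(x-c)/a$ would have to dominate every element, or be below every positive element, of the archimedean residue field -- a contradiction). Point (ii) needs the standard computation that $v_\ordering(x-c')=v(c'-c)\in\VG$ for $c'\notin\ball$, that $v_\ordering(x-c')=\gamma$ for $c'\in\ball$, and that definite quadratic factors contribute only minima of doubled values, so that no values outside $\VG+\mathbb{Z}\gamma$ arise.

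The converse is where the gap is most serious. Writing $x-c=(x-c')+(c'-c)$ never ``halves'' a value, and ``$v_\ordering(x-c)$ is not isolated'' is not a defined condition; as stated this is not a proof that a non-ball cut yields a $2$-divisible value group. The missing idea is the reverse construction: if the index is $2$, then (because $v_\ordering\kk(x)$ is generated over the divisible group $\VG$ by the values of the linear factors) some $\gamma=v_\ordering(x-c)$ lies outside $\VG$; the ultrametric ball $\ball=\{b\in\kk \mid v(b-c)>\gamma\}$, whose radius is the upper cut set $\{g\in\VG \mid g>\gamma\}$, then induces exactly the cut $C$, as $\ball^+$ or $\ball^-$ according to the sign of $x-c$ -- for $b\in\ball$ the dominant term of $x-b$ is $x-c$, while for $d\notin\ball$ it is $c-d$, which pins down on which side of the cut each element lies. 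Without this (or an equivalent argument, e.g.\ the paper's citation route), the implication ``not a ball cut $\Rightarrow$ index $1$'' is not established.
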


\begin{proof}
By the Baer-Krull Theorem (see \cite[Corollary 3.11]{Lam83}), the number of orderings compatible with the valuation $v_\ordering$ (i.e., determining the same $\mathbb R$-place of~$\kk(x)$) is equal to the order of the character group $\bigl(\sfrac{v_\ordering \kk(x)}{2v_\ordering \kk(x)}\bigr)^*$. Since  $v_\ordering \kk$ is divisible, the rational rank of $v_\ordering \kk(x)$ is at most 1, by the Abhyankar Inequality. Therefore 
$\sfrac{v_\ordering \kk(x)}{2v_\ordering \kk(x)}$ is a finite group, so the order of its character group is equal to the index $\bigl[v_\ordering \kk(x):2v_\ordering \kk(x)\bigr]$. By \cite[Theorem 2.1]{KMO11}, 
two distinct orderings of $\kk(x)$ determine the same $\mathbb R$-place if and only if the corresponding cuts of $\kk$ are the cuts at the upper and lower edge of the same ultrametric ball in $\kk$.
\end{proof}

Take a finite extension ${(F,v)} \subseteq {(E,v)}$ of valued fields. It is known (see e.g. \cite[\S3]{Kne73}), that
\[
[vF:2vF] = [vE:2vE].
\]
Now let us get back to our algebraic function field $\F$. Take a cut $C$ of the curve $\Curve$ and the associated ordering $\ordering\in \Orderings(\F)$. Further, denote by $v_\ordering$ the natural valuation of $\F$ induced by $\ordering$ and let $\place_\ordering$ be its canonical $\RR$-place. Fix an element $x\in \F$ transcendental over $\kk$. 

Suppose that $\pi_x(C)$ is a ball cut in $\kk$. It follows that
\[
2 = \bigl[ v_{\ordering\cap \kk(x)}\kk(x) : 2v_{\ordering\cap \kk(x)}\kk(x)\bigr] 
= [v_\ordering\F : 2v_\ordering\F],
\]
with the right hand side being evidently independent of $x$. This allows us to introduce the notion of ball cuts of a curve.

\begin{df}
A cut $C$ of the curve $\Curve$ is called a \term{ball cut} if for one (or equivalently, every) $x\in \F$ transcendental over $\kk$, the projection $\pi_x(C)$ is a ball cut in $\kk$.
\end{df}

Observe that in particular all the principal cuts of $\Curve$ (cf.\ Definition~\ref{df_principal_cut}) are ball cuts. Indeed, for every $\gp\in \Curve_i$, the associated principal cuts map to the ball cuts generated by singletons $x(\gp)$ in $\kk$ or to improper cuts if $x \in K$ has a pole at $\gp$. 

Now we are ready to present our second main result, that determines when two cuts, or equivalently orderings (in view of Theorem~\ref{thm_Cuts_homeo_Orderings}), correspond to the same $\RR$-place of the function field $\F$.

\begin{thm}\label{thm_2_cuts_1_place}
Let $C_1$ and $C_2$ be two ball cuts on $\Curve$. The corresponding orderings determine the same $\RR$-place of $K$ if and only if for every $x\in \F\setminus \kk$ the cuts $\pi_x(C_1)$ and $\pi_x(C_2)$ are induced by the same ultrametric ball.
\end{thm}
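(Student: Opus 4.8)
The plan is to reduce the statement to the already–known rational case by combining the commutative diagram of Proposition~\ref{diagram} with the Kuhlmann--Machura--Osiak criterion \cite[Theorem~2.1]{KMO11}. Write $\ordering_i:=\cto_{\F}(C_i)$ for the ordering of $\F$ attached to $C_i$ and $\place_i:=\place_{\ordering_i}$ for its canonical $\RR$-place ($i=1,2$). For every transcendental $x\in\F\setminus\kk$, Proposition~\ref{diagram} gives $\ordering_i\cap\kk(x)=\cto_{\kk(x)}\bigl(\pi_x(C_i)\bigr)$, and the behaviour of $\RR$-places under restriction recalled in the introduction gives $\place_i|_{\kk(x)}=\place_{\ordering_i\cap\kk(x)}$. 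Thus, under the classical bijection between $\Cuts(\kk)$ and $\Orderings(\kk(x))$ (see \cite{Gil81}), the cut $\pi_x(C_i)$ of $\kk$ corresponds to the ordering $\ordering_i\cap\kk(x)$, and, since $C_i$ is a ball cut, $\pi_x(C_i)$ is a ball cut of $\kk$.

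I would next record the elementary reduction that $\place_1=\place_2$ holds as $\RR$-places of $\F$ \emph{if and only if} $\place_1|_{\kk(x)}=\place_2|_{\kk(x)}$ for every transcendental $x$: the forward implication is trivial, and conversely any $f\in\F$ either is transcendental over $\kk$, so $f\in\kk(f)$ with $f\in\F\setminus\kk$, or is algebraic over $\kk$, so $f\in\kk(x)$ for any transcendental $x$; in both cases $\place_1(f)=\place_2(f)$.

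For the ``only if'' direction, assume $\place_1=\place_2$ and fix a transcendental $x$. Then $\ordering_1\cap\kk(x)$ and $\ordering_2\cap\kk(x)$ determine the same $\RR$-place of $\kk(x)$. If these two orderings coincide, then, the classical correspondence being bijective, $\pi_x(C_1)=\pi_x(C_2)$, and this common cut, being a ball cut, is induced by a single ultrametric ball. If they are distinct, \cite[Theorem~2.1]{KMO11} says the corresponding cuts $\pi_x(C_1)$ and $\pi_x(C_2)$ are the cuts at the lower and the upper edge of one and the same ultrametric ball of $\kk$; in either case $\pi_x(C_1)$ and $\pi_x(C_2)$ are induced by the same ball. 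The ``if'' direction runs the same bookkeeping backwards: if, for every transcendental $x$, $\pi_x(C_1)$ and $\pi_x(C_2)$ are induced by a common ball $\ball$, then either they coincide---whence $\ordering_1\cap\kk(x)=\ordering_2\cap\kk(x)$---or they equal $\ball^-$ and $\ball^+$ in some order and \cite[Theorem~2.1]{KMO11} yields that $\ordering_1\cap\kk(x)$ and $\ordering_2\cap\kk(x)$ share an $\RR$-place; either way $\place_1|_{\kk(x)}=\place_2|_{\kk(x)}$, and by the reduction of the previous paragraph $\place_1=\place_2$.

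Since both implications are just an unravelling of Proposition~\ref{diagram}, the restriction property of $\RR$-places, and \cite[Theorem~2.1]{KMO11}, I do not anticipate a real obstacle. The only points needing some care are the reduction ``same $\RR$-place on every $\kk(x)$ $\Rightarrow$ same $\RR$-place on $\F$'' and the uniform treatment of the degenerate cases---equal cuts, and the improper cuts $(\kk,\emptyset)$, $(\emptyset,\kk)$ produced when $x$ has a pole along $C_i$---both of which are painless because such cuts are themselves ball cuts, of the ball $\ball=\kk$.
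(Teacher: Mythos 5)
Your proof is correct and follows essentially the same route as the paper: reduce to the rational subfield $\kk(x)$ via Proposition~\ref{diagram} and the restriction property of $\RR$-places, then invoke the criterion of \cite[Theorems~2.1--2.2]{KMO11}. The only difference is presentational---you argue both implications directly, spelling out the reduction ``equal on every $\kk(x)$ $\Rightarrow$ equal on $\F$'' and the degenerate cases, whereas the paper runs the same ingredients contrapositively by picking an $x$ with $\place_1(x)\neq\place_2(x)$.
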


\begin{proof}
Take two cuts $C_1, C_2\in \Cuts(\Curve)$. Let $\ordering_1, \ordering_2\in \Orderings(\F)$ be the corresponding orderings of $\F$ and $\place_1, \place_2$ the associated $\RR$-places. Suppose that $\place_1\neq \place_2$, therefore there is an element $x\in \F$ such that $\place_1(x) \neq \place_2(x)$. Passing to the rational function field $\kk(x)$, we have $\place_1\restrict_{\kk(x)}\neq \place_2\restrict_{\kk(x)}$. It follows from  \cite[Theorem~2.2]{KMO11} that $\pi_x(C_1)$ and $\pi_x(C_2)$ cannot be induced by the same ultrametric ball.

Conversely, suppose that there is  an element $x\in \F\setminus \kk$ such that $\pi_x(C_1), \pi_x(C_2)$ do not correspond to a single ultrametric ball. Then, in particular, the associated $\RR$-places $\place_1, \place_2$ of $\F$ must differ on $x$.
\end{proof}

\section{Ball cuts of affine curves}
So far, we have been working in a general setup with some abstract real curve. However, once we embed our curve in an affine space (possibly of a high dimension) we obtain a clearer picture. In particular it turns out that the ball cuts of $\Curve$ are determined by ultrametric balls in the affine space $\AA^n\kk$. From now on when we write $\gp$ for point in  $\AA^n\kk$ it does not mean that it has to belong to $\Curve$.

\begin{prop}
Fix a smooth and complete real affine  curve $\Curve\subset \AA^n\kk$. Take a point $\gp = (x_1, \dotsc, x_n)\in \AA^n\kk$ and a cut $ (\VL, \VU)$ in $v\kk$. Assume that there is a component  $\Curve_i$ of the curve, which has non-empty intersections with both an ultrametric ball $\ball_\VU( \gp ) \in \AA^n\kk$ and its complement $\ball_\VU( \gp )^c = \AA^n\kk\setminus \ball_\VU( P )$. Then  $\ball_\VU( \gp )$ induces a ball cut on $\Curve_i$ \textup(possibly more than one\textup). 
\end{prop}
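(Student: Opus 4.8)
The plan is to pull the ball cut in $\kk$ back to the curve along the coordinate functions. First I would set up notation: write $B=\ball_\VU(\gp)$, let $x_1,\dots,x_n\in\F$ be the restrictions of the affine coordinates to $\Curve$, and observe that a point $\gr=(a_1,\dots,a_n)\in\Curve_i$ lies in $B$ exactly when $\min_{j\le n} v(x_j(\gr)-x_j)\in\VU\cup\{\infty\}$, where $x_j=x_j(\gp)$; here I use the previous proposition that all the ultrametrics $d_p$ on $\AA^n\kk$ agree, so I may use $d_\infty$. Since $B\cap\Curve_i\neq\emptyset$ and $B^c\cap\Curve_i\neq\emptyset$, the set $B\cap\Curve_i$ is a nonempty proper subset of $\Curve_i$.

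Next I would show that $B\cap\Curve_i$, viewed inside the linearly ordered set $\Curve_i\setminus\{\Infty{i}\}$, is a finite union of intervals, each of whose endpoints gives a cut of $\Curve_i$. The cleanest route is via Proposition~\ref{prop_piecewise_monotonic}: choose finitely many points $\gp_0,\dots,\gp_m\in\Curve_i$ so that each coordinate function $x_j$ is monotonic and pole-free on every interval between consecutive points (take the union of the partitions for all $j$; this is still finite). On each such interval $(\gp_k,\gp_{k+1})$ each $x_j$ is an order isomorphism onto an interval of $\kk$, so $v(x_j-x_j(\gp))$ is monotonic there, hence $\{\,\gr : v(x_j(\gr)-x_j)\in\VU\cup\{\infty\}\,\}$ — the preimage of the ball $B_\VU(x_j(\gp))\subset\kk$ — meets $(\gp_k,\gp_{k+1})$ in a sub-interval (a "tail" of the interval, or all of it, or nothing, by monotonicity of the valuation composed with the monotonic $x_j$, cf. the ball property~\eqref{eq_ballproperty}). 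Intersecting over $j\le n$ and taking the union over $k$ shows $B\cap\Curve_i$ is a finite union of intervals of $\Curve_i$. Because it is neither empty nor all of $\Curve_i$, at least one of these intervals has an endpoint $\gr\in\Curve_i$ which is a genuine boundary point, meaning points arbitrarily close on one side lie in $B$ and on the other side lie in $B^c$ (or the endpoint is $\Infty{i}$, handled separately). That boundary point, together with the choice of side, determines a principal or non-principal cut $(\CL,\CU)$ of $\Curve_i$ with $\CL$ (resp. $\CU$) consisting of the points on the $B$-side; I would then verify that $\pi_x(\CL,\CU)$ is exactly $B_\VU(x(\gp))^{\pm}$ for a suitable transcendental coordinate $x=x_j$, so $(\CL,\CU)$ is a ball cut of $\Curve_i$ in the sense of the preceding definition. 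The parenthetical "possibly more than one" is accounted for by the fact that $B\cap\Curve_i$ can be a union of several intervals, each contributing its own boundary cut(s).

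The main obstacle I expect is the bookkeeping around monotonicity and the value group: one must check that on an interval where $x_j$ is an order isomorphism onto $(c,d)\subset\kk$, the preimage of the $\kk$-ball $B_\VU(x_j(\gp))$ is again an interval — this uses that $a\mapsto v(a-x_j(\gp))$ is "unimodal" (weakly increasing then weakly decreasing, with the ball being the super-level set), which follows from the ultrametric ball property~\eqref{eq_ballproperty} but needs to be stated carefully, and then intersecting $n$ such intervals and re-assembling over the finite partition must be organized so that one actually exhibits a boundary point of $B\cap\Curve_i$ in $\Curve_i$ rather than merely knowing the set is a finite union of intervals. A secondary subtlety is the case where the relevant boundary "point" is $\Infty{i}$ or where a coordinate $x_j$ has a pole on $\Curve_i$ (then $x_j$ is transcendental and the ball pulls back to an improper cut near the pole), which should be dispatched by the same monotonicity argument applied to $1/x_j$ or by choosing a different coordinate that is finite near the boundary.
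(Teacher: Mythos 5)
Your plan is workable and reaches the conclusion by a genuinely different route from the paper. You keep all $n$ coordinate functions and the ultrametric $d_\infty$: refine the partition from Proposition~\ref{prop_piecewise_monotonic} so that every $x_j$ is monotonic on each piece, note that on such a piece the preimage of the $\kk$-ball $B_\VU(x_j(\gp))$ is convex (your later description---an interval, via unimodality of $a\mapsto v(a-x_j(\gp))$ and the ball property~\eqref{eq_ballproperty}---is the accurate one; it need not be a ``tail'', since a ball can pull back to a sub-interval strictly inside the piece), intersect over $j$, and cut at the boundary of $\ball_\VU(\gp)\cap\Curve_i$. The paper instead collapses the $n$ coordinates into the single function $r_\gp$ induced by $\rho_\gp(\gq)=\sum_i(x_i-y_i)^2$, exploiting the equality $d_2=d_\infty$ of the ultrametrics: the ball becomes the set where $\tfrac12 v(r_\gp)\in\VU\cup\{\infty\}$, one application of Proposition~\ref{prop_piecewise_monotonic} to $r_\gp$ alone makes the ball-part of each piece an initial or final segment, and the projection $\pi_{r_\gp}$ sends the resulting cut directly to the ball cut $B_{2\VU}(0)^+$ of $\kk$. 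What that single-function trick buys is exactly the step you defer as ``bookkeeping'': in your setup, after locating the boundary cut of $\bigcap_j x_j^{-1}\bigl(B_\VU(x_j(\gp))\bigr)$ you must still exhibit one transcendental coordinate $x_{j_0}$ with $\pi_{x_{j_0}}(C)=B_\VU(x_{j_0}(\gp))^{\pm}$, which is needed to invoke the definition of a ball cut of the curve. This can be completed---the boundary cut of a finite intersection of convex subsets of the piece is the boundary cut of one of them, say of $x_{j_0}^{-1}\bigl(B_\VU(x_{j_0}(\gp))\bigr)$; convexity of balls in $\kk$ then forces the image cut to sit at the edge of the full ball $B_\VU(x_{j_0}(\gp))$, and $x_{j_0}$ is nonconstant because its ball-preimage is a nonempty proper part of the piece---but it is the real content of your argument and must be written out, whereas the paper's choice of $r_\gp$ makes it immediate. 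Two minor points: since $\Curve\subset\AA^n\kk$ is affine and complete, the coordinate functions (like $r_\gp$) have no poles on $\Curve$, so the pole cases you worry about do not arise; and when $\ball_\VU(\gp)$ is a singleton your construction yields the principal cuts at $\gp$, which are ball cuts by the remark following the definition---this is the paper's separately handled trivial case.
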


\begin{proof}
If the ultrametric ball $\ball_{\VU}(\gp)$ is a singleton $\{ \gp\}$, then $\gp$ must lie on the curve~$\Curve$ and the assertion holds trivially---the cuts induced by $\ball_\VU(\gp)$ are precisely the two principal cuts associated with~$\gp$. Hence, for the rest of the proof, we may assume that $\ball_\VU(\gp)$ consists of more than just one point. Define a function $\rho_{\gp} : \AA^n\kk\to \kk$ by the formula
\[
\rho_P(\gq) := \sum_{i=1}^n (x_i - y_i)^2,
\qquad\text{where}\quad
\gq = (y_1, \dotsc, y_n) \in \AA^n\kk.
\]
Then $\rho_{\gp} $ induces a function $r_{\gp}  \in \F$. Note that 
\[
\ball_\VU( {\gp}  ) = \bigl\{ \gq\in \AA^n\kk\st \tfrac{1}{2}v(r_{\gp} (\gq))
\in \VU\cup \{\infty\}\bigr\}.
\]
Note that $r_{\gp} $ is defined everywhere on the curve. Proposition~\ref{prop_piecewise_monotonic} asserts that there are points $\gp_1 \prec \dotsc \prec\gp_m\in \Curve_i$ such that $r_{\gp} $ is   monotonic on every interval $(\gp_i, \gp_{i+1})$, and hence also on $[\gp_i, \gp_{i+1}]$ (by the Intermediate Value Theorem). This yields  that if $\gp_i$ and $\gp_{i+1}$ are in $\ball_{\VU}(\gp)$, then the whole interval  $[\gp_i, \gp_{i+1}]$ lies in $\ball_{\VU}(\gp)$. Since $\ball_{\VU}(\gp)$ cuts through the component $\Curve_i$,  there is at least one  interval $[\gp_i, \gp_{i+1}]$ that has nonempty intersections with both $\ball_\VU( \gp ) $ and $\ball_\VU( \gp )^c$. Set either
\[
\CL := [\gp_i, \gp_{i+1}]\cap \ball_{\VU}(\gp)
\qquad\text{and}\qquad
\CU := [\gp_i, \gp_{i+1}]\cap \ball_{\VU}(\gp)^c
\]
when $r_{\gp} $ is increasing on $(\gp_i, \gp_{i+1})$, or
\[
\CL := [\gp_i, \gp_{i+1}]\cap \ball_{\VU}(\gp)^c
\qquad\text{and}\qquad
\CU := [\gp_i, \gp_{i+1}]\cap \ball_{\VU}(\gp)
\]
when $r_{\gp} $ is decreasing. Using the definition of the function $r_{\gp} $ we see that all the elements of $\CL$ precede all the elements of $\CU$, due to the fact that $r_{\gp} $ is monotonic on $[\gp_i, \gp_{i+1}]$. Thus, the pair $(\CL, \CU)$ constitutes a proper cut of $[\gp_i, \gp_{i+1}]$. This cut is induced by a cut of the whole component $\Curve_i$ as shown in Section \ref{sec_cuts}. The projection $\pi_{r_{\gp} }$ projects our cut to the ball cut  $B_{2U}(0)^+$, therefore it is indeed a ball cut.
\end{proof}

\begin{thm}
Let $\Curve$ be a smooth and complete real affine curve. Then every ball cut on $\Curve$ is induced by some ultrametric ball in $\AA^n\kk$.
\end{thm}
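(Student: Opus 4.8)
The plan is to reverse the construction of the previous proposition: given a ball cut $C$ on $\Curve$, I want to produce a point $\gp\in\AA^n\kk$ and a cut $(\VL,\VU)$ of $\VG$ such that the ultrametric ball $\ball_\VU(\gp)$ induces $C$. First I would dispose of the trivial cases. If $C$ is one of the improper cuts $\bigl(\Curve_i\setminus\{\Infty{i}\},\emptyset\bigr)$ or $\bigl(\emptyset,\Curve_i\setminus\{\Infty{i}\}\bigr)$, it is induced by the ball $\ball=\kk$ (equivalently $\ball_\VU(\gp)=\AA^n\kk$) for any $\gp$. If $C$ is a principal cut $\upcut{\gp_0}$ or $\locut{\gp_0}$ with $\gp_0\in\Curve_i\subset\AA^n\kk$, then $C$ is induced by the singleton ball $\{\gp_0\}$. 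So assume $C=(\CL,\CU)$ is a non-principal, proper cut of $\Curve_i$, so that the cut has no realizing point on the curve and both $\CL\cap(\Infty{i},\cdot)$ and $\CU$ are ``genuinely infinite'' near the cut.

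The key step is to translate the ball-cut property on $\kk$ back to the curve. Pick $x\in\F\setminus\kk$; by hypothesis $\pi_x(C)$ is a ball cut in $\kk$, say $\pi_x(C)=\ball^+$ or $\ball^-$ for an ultrametric ball $\ball\subset\kk$. Using Proposition~\ref{prop_piecewise_monotonic} choose points on $\Curve_i$ so that $x$ is monotonic and pole-free on each interval between consecutive points; by Equation~\eqref{eq_Cuts=disjoint_union} the cut $C$ lies in exactly one $\PCuts\bigl((\gp,\gq)\bigr)$ and, via the order isomorphism $\pi_x$, corresponds to the cut $\ball^{\pm}\cap\Cuts(I)$ of the interval $I=\bigl(x(\gp),x(\gq)\bigr)$ (or its reverse). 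Now I would invoke the characterization~\eqref{eq_ballproperty} of ultrametric balls: the set $\ball$ restricted to $I$ is exactly $\{t\in I: d(a,t)\in\VU\cup\{\infty\}\}$ for the center $a$ and radius cut $(\VL,\VU)$. Transporting through the order isomorphism $(\gp,\gq)\cong I$, the corresponding subset of $(\gp,\gq)$ is $\{\gr: \tfrac12 v(x(\gr)-a)^{?}\ldots\}$ — more precisely, I pick a point $\gp_0\in\Curve_i$ with $x(\gp_0)=a$ (such a point exists on $\Curve_i$ since $a$ lies in the closure of $I$, or after subdividing I may assume $a$ is an endpoint), and then the subset is $\{\gr\in(\gp,\gq): v\bigl(x(\gr)-x(\gp_0)\bigr)\in\VU\cup\{\infty\}\}$. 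Writing $\gp_0=(x_1^0,\dotsc,x_n^0)\in\AA^n\kk$ I would then verify, using the fact that $\VG$ is divisible and $v$ is a valuation, that
\[
v\bigl(x(\gr)-x(\gp_0)\bigr)\ \text{and}\ \tfrac12 v\Bigl(\sum_{i=1}^n (x_i(\gr)-x_i^0)^2\Bigr)
\]
define the \emph{same} cut on the interval $[\gp,\gq]$ — indeed the left quantity is $\geq$ the right by minimality of coordinatewise valuations (as in the $d_p\equiv d_\infty$ proposition), but I must be careful that they need not be equal and that only the induced cut on the curve matters. This identifies the restriction of $C$ to $[\gp,\gq]$ with the cut induced by $\ball_\VU(\gp_0)$ on that interval.

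The remaining step is globalization: the cut $\ball_\VU(\gp_0)$ of $[\gp,\gq]$ extends, as explained in Section~\ref{sec_cuts} (the $\PCuts$/$\Cuts(\Curve_i)$ identification), to a unique cut of the whole component $\Curve_i$, and by the injectivity of that identification this extension must be $C$ itself; moreover the previous proposition guarantees that $\ball_\VU(\gp_0)$ does cut through $\Curve_i$, so it genuinely induces a ball cut, and one checks the induced cut is on the correct (upper or lower) edge by keeping track of whether $r_{\gp_0}$ is increasing or decreasing on $(\gp,\gq)$ versus whether $\pi_x(C)$ was $\ball^+$ or $\ball^-$. \textbf{The main obstacle} I anticipate is the middle step: showing that replacing the single coordinate $x$ by the radial function $\rho_{\gp_0}=\sum(x_i-x_i^0)^2$ does not change the induced cut on the curve — i.e.\ that the ``$\ell^\infty$ versus $\ell^2$'' discrepancy, harmless for the abstract ultrametric on $\AA^n\kk$, remains harmless once pulled back along the (only piecewise monotonic) coordinate functions of the curve. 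Handling the boundary bookkeeping — ensuring a point $\gp_0\in\Curve_i$ with the right value of $x$ actually exists after the subdivision, and that poles of $x$ correspond exactly to the improper ball $\ball=\kk$ — is routine but needs care.
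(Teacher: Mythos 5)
The gap is precisely the step you yourself flag as ``the main obstacle'', and it is not a routine verification: it is the entire content of the theorem, and as stated your reduction is false. It is not true in general that the ambient ball $\ball_\VU(\gp_0)$ --- with $\VU$ the radius of the ball in $\kk$ detected by your one chosen function $x$, and $\gp_0$ any curve point mapping into that ball --- induces the cut $C$: different elements of $\F$ (already the different coordinate functions) detect, at the same cut $C$, balls of \emph{different radii}, and the radius read off from one of them need not be the radius of the correct ambient ball. For a concrete illustration, let $\varepsilon\in\kk$ be a positive infinitesimal and suppose that along the relevant interval the curve is parametrized as $(t,\varepsilon t)$, with $C$ the ball cut at the upper edge of the set of points with $t$ infinitesimal. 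Then $\pi_{x_1}(C)$ is the ball cut of the ball of infinitesimals, with radius $\{g\in\VG \st g>0\}$, while $\pi_{x_2}(C)$ is the ball cut of a ball of radius $\{g\in\VG\st g>v(\varepsilon)\}$. Running your recipe with $x=x_2$ yields a center $\gp_0=(t_0,\varepsilon t_0)$ with $v(t_0)>0$ and radius $\VU=\{g\st g>v(\varepsilon)\}$; since the ambient ultrametric is the minimum of the coordinate distances, the trace of $\ball_{\VU}(\gp_0)$ on the curve is $\{(t,\varepsilon t)\st v(t-t_0)>v(\varepsilon)\}$, and every point $t=t_0+s$ with $s>0$ and $0<v(s)\le v(\varepsilon)$ lies in the lower set of $C$ yet above all points of this trace. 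So your ball induces a strictly smaller cut, not $C$. (For a general $x\in\F\setminus\kk$ even your auxiliary inequality $v\bigl(x(\gr)-x(\gp_0)\bigr)\ge d_\infty(\gr,\gp_0)$ fails, e.g.\ for $x=x_1/\varepsilon$.) Your proposal gives no criterion for selecting the function, the radius, or the side of the cut on which to place the center, and no argument can be extracted from the single-function picture.

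For contrast, the paper's proof works with all coordinates simultaneously: after shrinking to an interval $[\gp,\gq]$ on which every $x_i$ is monotonic and arranging that each detected ball is centered at $x_i(\gp)$ or $x_i(\gq)$, it forms the unions of radii $T_\gp$ and $T_\gq$ coming from the two endpoints and proves the key claim $T_\gp\neq T_\gq$ by a contradiction argument involving the auxiliary function $x_1+\varepsilon x_2$ and the ball characterization \eqref{eq_ballproperty}. Only with this comparison in hand can one take the larger radius set (say $T_\gp$), choose the center $\gl$ on the curve as the preimage, closest to the cut, of suitable elements $l_i$ picked through the coordinates whose detected balls lie in $\Balls_\gq$, and then verify that $d(\gl,\gl')\in T_\gp$ for $\gl'\in\CL$ between $\gl$ and the cut while $d(\gl,\gu)<T_\gp$ for $\gu\in\CU$, so that $\ball_{T_\gp}(\gl)$ induces exactly $C$. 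This side-by-side comparison of the radii detected from the two sides, and the careful placement of the center, is what your argument is missing; the trivial cases and the globalization step you describe are fine but peripheral.
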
 

\begin{proof}
First observe that for every component $\Curve_i\subseteq \Curve$, the assertion holds for the principal cuts $\locut{\gp}$ and $\upcut{\gp}$. Indeed, both cuts are induced by the ultrametric ball~$\{\gp\}$. 

Now let $C = (\CL, \CU)$ be a non-principal ball cut of a component $\Curve_k\subseteq \Curve$. For every coordinate $x_i$, $i\in \{1,\dotsc ,n\}$, the projection $\pi_i(C) = \pi_{x_i}(C)$ is a ball cut of~$\kk$. Choose an interval $[\gp,\gq]$ on  $\Curve_k$ in such a way that for every $i\in \{1,\dotsc, n\}$:
\begin{itemize}
\item $C$ is a proper cut of $[\gp,\gq]$,
\item $x_i$ is monotonic on $(\gp,\gq)$,
\item the ultrametric ball in $\kk$ determining $\pi_i(C)$ is centered in either $p_i := x_i(\gp)$ or  $q_i := x_i(\gq)$.
\end{itemize}
The choice is possible by  means of Proposition~\ref{prop_piecewise_monotonic} and the fact that every point of an ultrametric ball is its center. Taking $\gp$ or $\gq$ to be an inverse image of an element in the ultrametric ball generating $\pi_i(C)$ one achieves that the last condition holds. Since we are working in a finite-dimensional space, we may narrow the interval so that the above three conditions are simultaneously met for all coordinates. Denote by $\Balls_{\gp}$ (respectively, $\Balls_{\gq}$) the set of all ultrametric balls generated by $\pi_i(C)$ and centered in $p_i$ or $q_i$ for $i\in \{1, \dotsc, n\}$. Possibly one of these two sets can be empty. The radii of ultrametric balls are (by definition) upper cut sets. Let~$T_{\gp}$ (respectively, $T_\gq$) be the union of all the radii of the balls in $\Balls_{\gp}$ (respectively,~$\Balls_\gq$). 

We claim that $T_{\gp} \neq T_{\gq}$. We assume that both sets  $\Balls_\gp$, $\Balls_\gq$ are non-empty, because otherwise the claim is clearly true. Assume, in order to derive a contradiction, that $T_{\gp} =T_{\gq}$. Denote this set just by~$T$. Without loss of generality we can assume that $\pi_1(C)$ is a ball cut of a ball $B_T(p_1)$ and $\pi_2(C)$ is a ball cut of a ball $B_T(q_2)$. Consider the function $z = x_1 + \varepsilon x_2$, where $\varepsilon \in \{\pm 1\}$ is selected in such a way that both $x_1$ and $\varepsilon x_2$ are increasing or both are decreasing on $(\gp,\gq)$.
This implies that $z$ is monotonic on $(\gp,\gq)$. The projection $\pi_z(C)$ is a ball cut of a ball $\ball$ centered in $z(\gl)$ for some $\gl \in [\gp,\gq]$. Assume that $\gl$ lies in the lower cut set $\CL$ of the interval $[\gp,\gq]$. For $\gl$ lying in the upper cut set $\CU$ the proof is symmetric. For any $\gu \in \CU \cap [\gp,\gq]$ we have that 
\[
d(l_2,u_2) = d(l_2, q_2),
\]
because $d(l_2,q_2)< d(u_2, q_2)$. On the other hand, since $\ball_T(p_1)$ and $\ball_T(q_2)$ have the same radii and $d(l_2, u_2)$ is fixed, we can choose $\gu$ to be so close to the cut $C$ that 
\[
d(l_1,u_1)\geq d(l_2, u_2).
\]
Now take another point $\gl' \in \CL \cap (\gl,\gu)$ such that $ d(l_2,l'_2)= d(l_2, u_2)$. The point $\gl'$ can be, for instance, chosen as a point on $\Curve_k$ with $l'_2 = \sfrac{(l_2+ u_2)}{2}$. Since $v(l_1 - l'_1) \in T$ and $v(l_2 - l'_2)<T$, we have that
\begin{multline*}
d\bigl(z(\gl), z(\gl')\bigr) = v\bigl(z(\gl)-z(\gl')\bigr) 
= v\bigl((l_1 + \varepsilon l_2) - (l'_1 + \varepsilon l'_2)\bigr)\\ 
= v\bigl((l_1 - l'_1) + \varepsilon (l_2 - l'_2)\bigr) 
= v(l_2 - l'_2) =  d(l_2 , l'_2) = d(l_2, u_2).
\end{multline*}
On the other hand, $(l_1 - u_1)$ and $\varepsilon (l_2 - u_2)$ have the same signs (because of our choice of $\varepsilon$), and we obtain that
\begin{multline*}
d\bigl( z(\gl), z(\gu)\bigr) = v\bigl(z(\gl) - z(\gu)\bigr) 
= v\bigl( (l_1 + \varepsilon l_2) - (u_1 + \varepsilon u_2)\bigr) \\
= v\bigl( (l_1 - u_1) + \varepsilon (l_2 - u_2))
= v(l_1 - u_1) = d(l_1 , u_1).
\end{multline*}
Consequently, 
\[
d\bigl(z(\gl), z(\gl')\bigr) \leq  d\bigl(z(\gl), z(\gu)\bigr),
\]
but this contradicts \eqref{eq_ballproperty}. The inequality $T_{\gp} \neq T_{\gq}$ is proved.

In order to complete the proof of the theorem, consider two cases: $T_{\gq} \subset T_{\gp}$ and $T_{\gp} \subset T_{\gq}$. In the first case take all  projections $\pi_i$ sending the cut $C$ to a ball cut of a ball in $\Balls_{\gq}$. Every ball in $\Balls_{\gq}$ has a radius strictly contained in $T_{\gp}$, thus for every such ball we find an element $l_i$ in the lower cut set of $\pi_i(C)$ but strictly greater than $p_i$ and so close to $q_i$ that $v(l_i -q_i) \in  T_{\gp}$. Of all the preimages $\bigl\{ x_i^{-1}(l_i) \bigr\}$ select the one which is closest to $C$ (i.e., the interval $(x_i^{-1}(l_i), \gq)$ is smallest in the sense of inclusion), and call it $\gl$. Take a ball $\ball_{T_{\gp}}(\gl)$. Note that for every $\gl' \in \CL \cap (\gl,\gq)$ we have that
\[
d(\gl,\gl') =  \min\bigl\{v(l_1 - l'_1),\dotsc, v(l_m - l'_m)\bigr\} \in T_{\gp}.
\]
For every $\gu \in \CU \cap (\gl,\gq)$ we have $d(\gl,\gu) < T_{\gp},$ because $v(l_i - u_i)< T_{\gp} $ for the projection $\pi_i$ which realizes the radius $T_{\gp}$. Therefore the ball $\ball_{T_{\gp}}(\gl)$ induces the cut $C$ on $\Curve$.

For the second case the proof is symmetric.
\end{proof}

Using Theorem~\ref{thm_2_cuts_1_place} we obtain now:

\begin{thm}
Assume that $\Curve\subset \AA^n\kk$ is a smooth and complete real affine curve. Let $C_1$ and $C_2$ be two ball cuts on $\Curve$. If the corresponding orderings determine the same $\RR$-place of $\F$, then there is an ultrametric ball $\ball\subset \AA^n\kk$ inducing $C_1$ and $C_2$ on $\Curve$.
\end{thm}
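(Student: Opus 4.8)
The plan is to combine the previous two theorems of this section in a straightforward way. By Theorem~\ref{thm_2_cuts_1_place}, the hypothesis that the orderings corresponding to $C_1$ and $C_2$ determine the same $\RR$-place of $\F$ means precisely that for every $x\in\F\setminus\kk$ the cuts $\pi_x(C_1)$ and $\pi_x(C_2)$ are induced by one and the same ultrametric ball in $\kk$. Applying the previous theorem to $C_1$ yields an ultrametric ball $\ball_1\subset\AA^n\kk$ inducing $C_1$ on $\Curve$; similarly we obtain $\ball_2\subset\AA^n\kk$ inducing $C_2$. The goal is then to show that $\ball_1$ and $\ball_2$ may be chosen to be the same ball, or at least that some common ball works for both cuts.

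First I would inspect the construction in the proof of the previous theorem to see that the inducing ball it produces for a cut $C$ of the component $\Curve_k$ is essentially canonical: its radius is the larger of the two sets $T_\gp$, $T_\gq$ (the unions of the radii of the coordinate-projection balls at the two endpoints), and its center is a point $\gl\in[\gp,\gq]$ chosen close to the cut. More precisely, I would argue that a subset $\ball\subseteq\AA^n\kk$ is an ultrametric ball inducing $C$ on $\Curve_k$ exactly when, for each coordinate $i$, the projection of $\ball$ to the $i$-th axis is the ball inducing $\pi_i(C)$ (this uses that $d=d_\infty$ from Section~\ref{sec_uballs}, so membership in $\ball$ is governed coordinatewise by the minimum of the coordinate valuations, together with monotonicity of each $x_i$ on a small interval around the cut). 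This characterization shows that the data determining which balls induce $C$ on $\Curve$ depend only on the tuple $\bigl(\pi_1(C),\dotsc,\pi_n(C)\bigr)$ of coordinate ball-cuts of $\kk$.

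Now since the hypothesis (via Theorem~\ref{thm_2_cuts_1_place}) gives $\pi_{x_i}(C_1)=\pi_{x_i}(C_2)$ as ball cuts for each coordinate $x_i$ — indeed they are induced by the same ultrametric ball of $\kk$ — the coordinate data for $C_1$ and $C_2$ coincide. Hence any ultrametric ball of $\AA^n\kk$ whose coordinate projections are the balls inducing $\pi_i(C_1)=\pi_i(C_2)$ will induce $C_1$ on the component containing it and also induce $C_2$ on the component containing it. If $C_1$ and $C_2$ live on the same component this is immediate. If they live on different components $\Curve_k$ and $\Curve_{k'}$, I would take the ball $\ball$ produced by the previous theorem's construction for $C_1$ (radius $T$, some center); by the coordinatewise characterization $\ball$ has the correct coordinate projections, so running the same construction (or just the induced-cut computation) on $\Curve_{k'}$ shows $\ball$ also induces $C_2$ there. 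This yields a single $\ball\subset\AA^n\kk$ inducing both $C_1$ and $C_2$ on $\Curve$.

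\textbf{Main obstacle.} The delicate point is the claim that ``which balls of $\AA^n\kk$ induce a given ball cut $C$ on $\Curve$'' is determined solely by the coordinate ball-cuts $\pi_i(C)$, independently of the component and of the auxiliary interval $[\gp,\gq]$ chosen. One must check that the center of a valid inducing ball can always be taken to be a point actually lying on the relevant arc of $\Curve$ near the cut, and that shrinking or relocating the interval $[\gp,\gq]$ does not change the resulting radius $T$ — this is where the identity $d=d_\infty$, the monotonicity from Proposition~\ref{prop_piecewise_monotonic}, and the ultrametric fact that every point of a ball is a center all have to be used together carefully. Once that canonicity is established, the theorem follows by feeding the conclusion of Theorem~\ref{thm_2_cuts_1_place} into the construction.
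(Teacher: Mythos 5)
Your plan — feed Theorem~\ref{thm_2_cuts_1_place} into the preceding existence theorem and then argue that the inducing ball is canonical — breaks down at the two claims it rests on. First, Theorem~\ref{thm_2_cuts_1_place} does \emph{not} give $\pi_{x_i}(C_1)=\pi_{x_i}(C_2)$: it only says that these two cuts of $\kk$ are induced by the same ultrametric ball, and they may be the cuts at the two opposite edges of that ball. The paper's closing example realizes exactly this: $C(1,1)$ and $C(-1,-1)$ determine the same $\RR$-place, yet $\pi_x$ sends one to the upper and the other to the lower edge cut of $I_\kk$. Second, and more seriously, the coordinatewise characterization you invoke --- that $\ball$ induces $C$ if and only if for each $i$ the $i$-th projection of $\ball$ is the ball inducing $\pi_i(C)$ --- is false. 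Since $d=d_\infty$, a ball of $\AA^n\kk$ is a product of coordinate balls \emph{all of the same radius}, whereas the balls inducing $\pi_1(C),\dotsc,\pi_n(C)$ in general have different radii; handling precisely this mismatch (the sets $T_\gp\neq T_\gq$ of coordinate radii) is what makes the proof of the preceding theorem nontrivial. Concretely, take the oval $x^2+c^{-2}y^2=1$ with $c>0$ infinitesimal and let $C$ be the cut on the arc $y>0$ whose image under $\pi_x$ is the upper edge cut of $I_\kk$. Then the ball $I_\kk\times I_\kk$ induces $C$, but its $y$-projection is $I_\kk$, while the unique ball inducing $\pi_y(C)$ is the strictly smaller ball $\bigl\{b\in\kk\st v(b-c)>v(c)\bigr\}$; moreover no ball of $\AA^2\kk$ has these two coordinate balls as its projections, since their radii differ. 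So the ``canonicity'' your argument needs (that which balls induce $C$ is determined by the tuple of coordinate data, uniformly over components and intervals) is both unproved and, in the form you state it, wrong.

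The paper avoids coordinates entirely by using a single test function. By the preceding theorem choose a ball $\ball_U(P)$ inducing $C_1$, with $P$ a point of $\Curve$ (possible since every point of a ball is a center), and let $r_P\in\F$ be the restriction to $\Curve$ of the squared distance function $\rho_P$. Then $\gq\in\ball_U(P)$ exactly when $\tfrac12 v\bigl(r_P(\gq)\bigr)\in U\cup\{\infty\}$, so $\pi_{r_P}(C_1)=B_{2U}(0)^+$. Applying Theorem~\ref{thm_2_cuts_1_place} to this one element shows that $\pi_{r_P}(C_2)$ is induced by the same ball $B_{2U}(0)$, and since $r_P$ takes only non-negative values it must again be the upper edge cut $B_{2U}(0)^+$; reading the displayed equivalence backwards, $\ball_U(P)$ induces $C_2$ as well. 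If you want to keep a coordinatewise route, you would first need a correct description of \emph{all} balls inducing a given ball cut (allowing projections strictly larger than the coordinate balls) and would still have to cope with $C_1,C_2$ projecting to opposite edges of the coordinate balls --- both complications the one-function argument makes unnecessary.
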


\begin{proof}
Assume that the orderings corresponding to $C_1$ and $C_2$ determine the same place.  Assume that $C_1$ is a cut on the component $\Curve_k$ determined by an ultrametric ball $B_U(P)$ for some $P=(x_1,...,x_n)$ in $\AA^n\kk$ (we may assume that $P\in \Curve_k$).
Then  the function  $\rho_P : \AA^n\kk\to \kk$ given by 
\[
\rho_P(Q) := \sum_{i=1}^n (x_i - y_i)^2,
\qquad\text{where}\quad
Q = (y_1, \dotsc, y_n) \in \AA^n\kk.
\] 
is not constant. Let $r_P$ be the polynomial function on $\Curve$ induced by $\rho_P$. The projection $\pi_{r_P}$ sends $C_1$ to the cut $B_{2U}(0)^+$. Theorem~\ref{thm_2_cuts_1_place} asserts that $\pi_{r_P}(C_2) = B_{2U}(0)^+$, as well (because $r_P$ has only non-negative values). Therefore the cut $C_2$ is also determined by $\ball_U(P)$.
\end{proof}

\subsection*{Example} 
Let $\kk$ be a nonarchimedean real closed field and $I_{\kk}$ be the ideal of infinitesimals in $\kk$, which is the maximal ideal of the valuation ring of the natural valuation $v$ of $\kk$. Take $a \in I_{\kk}$ and consider the function field $\F$ of the genus two plane curve $\Curve$ given by the equation
\begin{equation} \label{eq_genus2}
y^2 + (x^2-a^2)(x^2-1) = 0.
\end{equation}
The curve has two components which can be separated by the function $x\in \F$. Let $T$ be the set of positive elements in the value group $\VG$. Consider the ultrametric ball centered in the origin $\go$: 
\[
\ball_T(\go) = I_{\kk}\times I_{\kk}.
\]
The ball $\ball_T(\go)$ determines four cuts on $\Curve$, two on each component. For each of these cuts we consider the signatures of the coordinates $x$ and $y$ in the orderings corresponding to these cuts. The four cuts can be distinguished by these signatures, so let us denote the cuts by $C(\sgn_x, \sgn_y)$. The images of all four cuts under the  projections $\pi_x$ and $\pi_y$  are the cuts determined by the ball $\ball_T(0)$ in $\kk$.
 
Take $z = \sfrac{y}{x}$ and consider the projection $\pi_z$. Equation~\eqref{eq_genus2} can be rewritten as  
\[
z^2x^2 + \bigl(x^2-a^2\bigr)\bigl(x^2-1\bigr) = 0.
\]
Therefore, 
\[
0 = z^2 + \biggl(1-\Bigl(\frac{a}{x}\Bigr)^2 \biggr)\bigl(x^2-1\bigr) = z^2-1+x^2 + \Bigl(\frac{a}{x}\Bigr)^2 -a^2. 
\]
Thus,
\begin{equation}\label{equation_z}
(z+1)(z-1) = a^2 - x^2-\Bigl(\frac{a}{x}\Bigr)^2.
\end{equation}
Let $v$ be a real valuation of $\F$ such that $v(a) > v(x) > 0$. Then 
\[
v\bigl((z-1)(z+1)\bigr)  = v\biggl(a^2 - x^2-\Bigl(\frac{a}{x}\Bigr)^2\biggr) >0,
\]
so
\[
z \in B_T(1)\quad \text{or}\quad z \in B_T(-1).
\]
Now assume that $v(x)=0$. Then, 
\[
v\bigl((z-1)(z+1)\bigr) = v(x^2) = 0.
\]
Note that if  $v(z+1) > 0$, then $v(z-1) = v(z+1-2)= v(-2) = 0$. Symmetrically, if $v(z-1) > 0$, then $v(z+1) = 0$. Therefore, $v(z+1) = v(z-1) = 0$ which shows that $z \notin B_T(1)$ and $z \notin \ball_T(-1)$. Further we see that the right hand side of Equation~\eqref{equation_z}, up to an infinitesimal, equals $-x^2$ and hence is negative. Therefore, $-1<z<1$, which means that $z$ lies between the balls $\ball_T(1)$ and $\ball_T(-1)$. 
The sign of $z$ depends on the signs of $x$ and $y$, and thus,
\[
\pi_z\bigl(C(1,1)\bigr) =\pi_z\bigl(C(-1,-1)\bigr) =  \ball_T(1)^-
\]
and 
\[
\pi_z\bigl(C(-1,1)\bigr) =\pi_z\bigl(C(1,-1)\bigr) =  \ball_T(-1)^+,
\]
so the orderings corresponding to the cuts $C(1,1)$ and $C(-1,-1)$ give the same $\RR$-place which is different from the place determined by the orderings corresponding to $C(-1,1)$ and $C(1,-1)$.

\bibliographystyle{alpha} 
\bibliography{places}
\end{document}